\numberwithin{equation}{section}
\newtheorem{thm}{Theorem}[section]
\newtheorem{cor}[thm]{Corollary}
\newtheorem{lem}[thm]{Lemma}
\newtheorem{prop}[thm]{Proposition}
\newtheorem{defn}[thm]{Definition}
\newtheorem{rem}[thm]{Remark}
\newcommand\R{{\mathbb R}}
\def\ts{\stackrel{2-drift}{\relbar\joinrel\relbar\joinrel\relbar\joinrel\relbar\joinrel\rightharpoonup}}
\def\tss{\stackrel{2s-drift}{\relbar\joinrel\relbar\joinrel\relbar\joinrel\relbar\joinrel\rightharpoonup}}
\def\div{{\rm div}}
\def\dsp{\displaystyle}
\let\oldmarginpar\marginpar
\renewcommand\marginpar[1]{\-\oldmarginpar[\raggedleft\footnotesize #1]%
{\raggedright\footnotesize #1}}
\newcommand{\ve}{\varepsilon}
\newcommand{\iti}{\int_0^T}
\newcommand{\itt}{\int_0^t}
\newcommand{\iy}{\int_{Y^0}}
\newcommand{\ip}{\int_{\partial\Sigma^0}}
\newcommand{\id}{\int_{\Omega_{\varepsilon}}}
\newcommand{\ib}{\int_{\partial\Omega_{\varepsilon}}}
\newcommand{\nx}{\nabla_x}
\newcommand{\ny}{\nabla_y}
\newcommand{\itr}{\int_{\mathbb{R}^d}}
\newcommand{\drfyg}{\left(t,x-\frac{b^* t}{\varepsilon},\frac{x}{\varepsilon}\right)}
\newcommand{\drfg}{\left(t,x-\frac{b^* t}{\ve}\right)}
\newcommand{\drift}[1]{\widehat{#1}}
\newcommand{\mdrift}[1]{\check{#1}}
\def\div{{\rm div}}
\def\dsp{\displaystyle}
\def\ts{\stackrel{2-drift}{\relbar\joinrel\relbar\joinrel\relbar\joinrel\relbar\joinrel\rightharpoonup}}
\def\tss{\stackrel{2s-drift}{\relbar\joinrel\relbar\joinrel\relbar\joinrel\relbar\joinrel\relbar\joinrel\rightharpoonup}}
\def\R{\mathbb{R}}
\def\signga{\bigskip {\small \sc
      CMAP, UMR CRNS 7641, \'Ecole Polytechnique,
      Route de Saclay, Palaiseau F91128, France \par E-mail:}
    \tt{\small gregoire.allaire@polytechnique.fr} }
\def\signhh{\bigskip {\small \sc
      DPMMS, CMS, University of Cambridge,
      Wilberforce road, Cambridge CB3 0WB, UK
      \par E-mail:}
    \tt{\small H.Hutridurga@dpmms.cam.ac.uk} }
\begin{document}

\title[Homogenization of Nonlinear Adsorption]{Upscaling nonlinear adsorption in periodic porous media - Homogenization approach.}

\author{Gr\'egoire Allaire, Harsha Hutridurga}

\begin{abstract}
We consider the homogenization of a model of reactive flows through periodic porous media 
involving a single solute which can be absorbed and desorbed on the pore boundaries. This 
is a system of two convection-diffusion equations, one in the bulk and one on the pore 
boundaries, coupled by an exchange reaction term. The novelty of our work is to consider 
a nonlinear reaction term, a so-called Langmuir isotherm, in an asymptotic regime of 
strong convection. We therefore generalize previous works on a similar linear model 
\cite{AllaireHutridurga, Allaire10, AllaireMikelic10}. 
Under a technical assumption of equal drift velocities in the bulk and on the pore boundaries, 
we obtain a nonlinear monotone diffusion equation as the homogenized model. Our main 
technical tool is the method of two-scale convergence with drift \cite{Marusic05}. 
We provide some numerical test cases in two space dimensions to support our theoretical 
analysis. 
\end{abstract}

\maketitle

\graphicspath{{figures/}}

\DeclareGraphicsExtensions{.pdf,.png,.jpg}

{\bf AMS 2010 classification:} 35B27, 35K55, 35B50, 74Q15.

\section{Introduction and setting of the problem}
Solute transport in porous media is a topic of interest for chemists, geologists and
environmental scientists. The phenomena that affect solute transport 
are convection, diffusion and the chemical reactions that the solutes might undergo. Since 
the seminal work of G.I.~Taylor \cite{Taylor53}, dispersion phenomenon (i.e., the phenomenon of the spreading of solutes in a fluid medium) has attracted 
a lot of attention. Mathematical modeling of solute transport through porous media 
can be approached via various means. One possibility is to describe the physical 
and chemical phenomena at the pore (microscopic) scale and then perform an `upscaling' 
or `averaging' in order to derive a macroscopic model. The theory of Homogenization 
(see e.g. \cite{HOR, Jikov94}) is a mathematically rigorous approach for averaging 
partial differential equations and carrying out the above program. Upscaling techniques, 
homogenization being one of them, are necessary to perform numerical simulations at a 
reasonable computational cost since it is very difficult, if not impossible, to perform numerical simulations of pore scale models.

Many works have been devoted to the homogenization of reactive transport in porous 
media \cite{AuAd:95, CM:08, CDT:03, vDK92, HJ1, Mau:91, MikVanD:05, MP:04, MP:06} and references therein. 
The present work is a sequel to \cite{Allaire10, AllaireHutridurga, AllaireMikelic10}: more precisely, 
it generalizes the homogenization of these previous linear models in a regime 
of strong convection to the nonlinear case of a so-called Langmuir isotherm for 
the reaction term. Of course, there are previous works on the homogenization 
of nonlinear models of reactive flows in porous media (see \cite{CDT:03, HJ1, HJM:94, MP:04, MP:06} 
to cite a few of them). However, to our knowledge, none of them were concerned 
with the present setting where, at the pore scale, convection, diffusion and 
reaction are of the same order of magnitude. Such a local equilibrium of all 
terms in the microscopic model yields a large convection at the macroscopic 
scale.

To be more specific, we now describe the main physical assumptions and give our 
detailed mathematical model. We consider a single solute dissolved in an incompressible 
saturated fluid in a porous medium. An adsorption/desorption reaction 
can occur at the pore boundaries. We use the Langmuir isotherm to model the reaction 
phenomenon. There are two scalar unknown concentrations of the solute: $u_\ve$ in the bulk 
and $v_\ve$ on the liquid/solid interfaces. A convection-diffusion equation is 
considered for $u_\ve$ in the bulk and a similar equation is considered for $v_\ve$ 
on the pore boundaries. These two equations are coupled using a term that represents 
the reaction process at the interfaces. Of course, in most of the applications, 
the assumption of single solute being dissolved in the fluid is far from reality. 
So, our model is a toy model and should by no means be considered complete. 
In a recent preprint \cite{AllaireHutridurga2} we have considered a more involved 
multiple species model.

We consider an $\ve$-periodic infinite porous medium where $\ve$ is a small 
positive parameter, defined as the ratio between the period and a characteristic 
macroscopic lengthscale. Typically, this medium is built out 
of $\mathbb{R}^d$ ($d=2$ or $3$, being the space dimension) 
by removing a periodic distribution of solid obstacles which, after rescaling, 
are all similar to the unit obstacle $\Sigma^0$. More precisely, let $Y = [0,1]^d$ 
be the unit periodicity cell. Let us consider a smooth partition: $Y = \Sigma^0 \cup Y^0$, 
where $\Sigma^0$ is the solid part and $Y^0$ is the fluid part. The unit periodicity 
cell is identified with the flat unit torus $\mathbb{T}^d$. The fluid part is assumed 
to be a smooth connected open subset whereas no particular assumptions are made on the solid part.

For each multi-index $j\in\mathbb{Z}^d$, we define $Y^j_\ve = \ve(Y^0+j)$, 
$\Sigma^j_\ve = \ve(\Sigma^0+j)$, $S^j_\ve = \ve(\partial\Sigma^0+j)$, the 
periodic porous medium $\Omega_\ve = \displaystyle \cup_{j\in\mathbb{Z}^d} Y^j_\ve$ 
and the $(d-1)$-dimensional surface $\partial\Omega_\ve = \cup_{j\in\mathbb{Z}^d} S^j_\ve$. 
The following standard notations in the theory of Homogenization are used: $x$ denotes 
the macroscopic space variable (running in $\Omega_\ve$ 
or in $\mathbb{R}^d$) and $y$ denotes the microscopic space variable (running in $Y$). 
We will often use the change of variables: $y=x/\ve$. 

We denote by $n(y)$ the exterior unit normal to $Y^0$ and by 
$d\sigma(y)$ the Lebesgue surface measure on $\partial Y^0=\partial\Sigma^0$. 
Then, $G(y) = Id - n(y) \otimes n(y)$ 
is the projection matrix on the tangent hyperplane to the surface 
$\partial Y^0=\partial\Sigma^0$. In order to define a Laplace-Beltrami operator on 
this surface, we define the tangential gradient $\nabla^s_y = G(y) \nabla_y$ and the 
tangential divergence $\div^s_y \Psi = \div_y (G(y) \Psi)$ for any $\Psi(y):\R^d\to\R^d$. 
Scaling the projection matrix using $y=x/\ve$ gives a projection matrix, $G_\ve(x)=G(x/\ve)$, 
on the tangent hyperplane to the pore boundary $\partial\Omega_\ve$ and, consequently, 
rescaled tangential operators, denoted by $\nabla^s$ and $\div^s$, are defined with respect 
to the $x$ variable on $\partial\Omega_\ve$.

We assume that the porous medium is saturated with an incompressible fluid, the velocity 
of which is assumed to be given, independent of time and periodic in space. The fluid 
cannot penetrate the solid obstacles but can slip on their surface. Therefore, we consider 
two periodic vector fields: $b(y)$, defined in the bulk $Y^0$, and $b^s(y)$, defined on 
the surface $\partial\Sigma^0$ and belonging at each point of $\partial\Sigma^0$ to its 
tangent hyperplane. Assuming that the fluid is incompressible and does not penetrate 
the obstacles means that
$$
\div_y b(y) = 0 \quad \mbox{ in } Y^0, \quad 
b(y) \cdot n(y) = 0 \quad \textrm{ on } \partial\Sigma^0 ,
$$
$$
\div^s_y b^s(y) = 0 \quad \textrm{ on } \partial\Sigma^0, \quad 
b^s(y) \cdot n(y) = 0 \quad \textrm{ on } \partial\Sigma^0.
$$
In truth, $b^s(y)$ should be the trace of $b(y)$ on $\partial\Sigma^0$ but, since this 
property is not necessary for our analysis, we shall not make such an assumption. 
Of course, some regularity is required for these vector fields and we assume that 
$b(y) \in L^\infty(Y^0;\mathbb{R}^d)$, $b^s(y) \in L^\infty(\partial \Sigma^0;\mathbb{R}^d)$.

We assume that the molecular diffusion is periodic, possibly anisotropic, varying in space 
and different in the bulk and on the pore boundaries. In other words, we introduce two periodic 
symmetric tensors $D(y)$ and $D^s(y)$, with entries belonging respectively to $L^\infty(Y^0)$ 
and to $L^\infty(\partial\Sigma^0)$, which are assumed to be uniformly coercive, namely that 
there exists a constant $C>0$ such that, for any $\xi\in\mathbb{R}^d$,

$$
D(y)\xi\cdot\xi \geq C|\xi|^2 \mbox{ a.e. in } Y^0 , \quad 
D^s(y)\xi\cdot\xi \geq C|\xi|^2 \mbox{ a.e. on } \partial\Sigma^0 .
$$

Let us introduce three positive constants, $\kappa$ (the adsorption rate), 
$\alpha$ and $\beta$ (the Langmuir parameters). For some positive final time $T$, 
let us consider the following coupled system of parabolic equations of which the scalar concentrations $u_\ve$ 
and $v_\ve$ are the solutions:
\begin{equation}
\label{eq:p-1}
\frac{\partial {u_\ve}}{\partial t} + \frac{1}{\ve}b_\ve \cdot \nabla u_\ve - \div\left(D_\ve \nabla {u_\ve}\right) = 0 \: \: \textrm{in} \: \: (0,T)\times\Omega_\ve,
\end{equation}
\begin{equation}
\label{eq:p-2}
\frac{\partial v_\ve}{\partial t} + \frac{1}{\ve}b^s_\ve \cdot \nabla^s v_\ve - \div^s\left(D^s_\ve \nabla^s {v_\ve}\right) = \frac{\kappa}{\ve^2} \left[\frac{\alpha u_\ve}{1+\beta u_\ve} - v_\ve \right] \: \: \textrm{on} \: \: (0,T)\times \partial \Omega_\ve,
\end{equation}
\begin{equation}
\label{eq:p-2b}
-\frac{D_\ve}{\ve} \nabla u_\ve \cdot n = \frac{\kappa}{\ve^2} \left[\frac{\alpha u_\ve}{1+\beta u_\ve} - v_\ve \right] \: \: \textrm{on} \: \: (0,T)\times \partial \Omega_\ve,
\end{equation}
\begin{equation}
\label{eq:p-3}
u_\ve(0,x) = u^{in}(x) \mbox{ in } \Omega_\ve , \quad v_\ve(0,x) = v^{in}(x) \mbox{ on } \partial\Omega_\ve,
\end{equation}
with the notations (and similar ones for $b^s$ and $D^s$):
$$
b_\ve(x) = b\left(\frac{x}{\ve}\right) \quad \mbox{ and } \quad 
D_\ve(x) = D\left(\frac{x}{\ve}\right) .
$$
The specific $\ve$-scaling of the coefficients in (\ref{eq:p-1})-(\ref{eq:p-2b}) 
is not new and is well explained, e.g., in \cite{AllaireMikelic10}. Before 
adimensionalization, the physical system of equations is written without any 
power of $\ve$ in the original time-space coordinates $(\tau,y)$. Since we are 
interested in a macroscopic view and a long time behaviour of this coupled 
system of equations, we perform a ``parabolic'' scaling of the time-space 
variables, namely $(\tau,y)\to(\ve^{-2}t,\ve^{-1}x)$, which precisely yields 
the scaled model (\ref{eq:p-1})-(\ref{eq:p-2b}).

The nonlinear Langmuir isotherm is denoted by $f$ and $F$ is its primitive such that 
$F'(u)=f(u)$ and $F(0)=0$, namely
\begin{equation}
\label{eq:fprim}
f(u_\ve) = \frac{\alpha u_\ve}{1+\beta u_\ve}, \:\:\:\: 
F(u_\ve) = \frac{\alpha}{\beta}\left[u_\ve - \frac{1}{\beta} \log(1+\beta u_\ve)\right] .
\end{equation}
The initial data are chosen non-negative: $u^{in}, v^{in} \geq 0$ and such that 
$u^{in} \in L^2(\mathbb{R}^d)\cap L^\infty(\mathbb{R}^d)$ 
and $v^{in}\in H^1(\mathbb{R}^d)\cap L^\infty(\mathbb{R}^d)$ 
so that its trace is well defined on $\partial\Omega_\ve$. In order to homogenize the system 
(\ref{eq:p-1})-(\ref{eq:p-3}), we need a technical assumption on the velocity fields which 
amounts to saying that the bulk and surface drifts are equal (their common value being called 
$b^*$ in the sequel)
\begin{equation}
\label{eq:drift}
\frac{1}{|Y^0|}\int_{Y^0}b(y)\, {\rm d}y = \frac{1}{|\partial\Sigma^0|}\int_{\partial\Sigma^0}b^s(y)\, {\rm d}\sigma(y) = b^* .
\end{equation}
Such an assumption was not necessary in the linear case \cite{AllaireHutridurga} but 
is the price to pay for extending our previous results to the nonlinear case of the 
Langmuir isotherm. 

Our main result (Theorem \ref{main-weak}) says that the solution $(u_\ve, v_\ve)$ of 
(\ref{eq:p-1})-(\ref{eq:p-3}) is approximately given by the ansatz:
$$
u_\ve(t,x) \approx u_0\left(t,x-\frac{b^*t}{\ve}\right)+\ve u_1\left(t,x-\frac{b^*t}{\ve},\frac{x}{\ve}\right) ,
$$
$$
v_\ve(t,x) \approx f(u_0)\left(t,x-\frac{b^*t}{\ve}\right)+\ve v_1\left(t,x-\frac{b^*t}{\ve},\frac{x}{\ve}\right) ,
$$
where $u_0$ is the solution of the following macroscopic nonlinear diffusion equation:
$$
\left\{
\begin{array}{l}
\dsp \left[|Y^0| + \frac{\alpha|\partial\Sigma^0|}{(1+\beta u_0)^2}\right] 
\frac{\partial u_0}{\partial t} - \div_x(A^*(u_0)\nx u_0) = 0
\quad \textrm{ in } (0,T)\times\mathbb{R}^d ,\\[0.5cm] 
\dsp \left[|Y^0|u_0 + \frac{|\partial\Sigma^0|\: \alpha\: u_0}{1+\beta u_0}\right](0,x) = |Y^0|u^{in}(x) + |\partial\Sigma^0| v^{in}(x) \quad \textrm{ in } \mathbb{R}^d ,
\end{array} \right.
$$
and the corrector $(u_1,v_1)$ are defined by
$$
u_1(t,x,y) = \chi\big(y, u_0(t,x)\big)\cdot \nx u_0(t,x)
$$
and
$$
v_1(t,x,y) = \frac{\alpha}{(1+\beta u_0(t,x))^2} \omega\big(y, u_0(t,x)\big)\cdot \nx u_0(t,x)
$$
where $(\chi,\omega) = (\chi_i,\omega_i)_{1\leq i \leq d}$ is the solution of the cell 
problem:
$$
\left\{ 
\begin{array}{ll}
-b^* \cdot e_i + b(y)\cdot(e_i + \ny \chi_i) - \div_y(D(e_i + \ny \chi_i)) = 0 & \textrm{in } Y^0,\\[0.3cm]
- D\left( e_i + \ny \chi_i\right)\cdot n = \dsp\frac{\alpha \kappa}{(1+\beta u_0)^2} \left(\chi_i - \omega_i \right) & \textrm{on } \partial \Sigma^0, \\[0.3cm]
-b^* \cdot e_i + b^s(y)\cdot(e_i + \ny^s \omega_i) - \div^s_y(D^s(e_i+ \nabla^s_y \omega_i)) = \kappa \left(\chi_i -  \omega_i \right) & \textrm{on } \partial \Sigma^0, \\[0.3cm]
y \to (\chi_i(y),\omega_i(y)) \quad  Y-\textrm{periodic.} &
 \end{array}\right.
$$
Note that the cell solution $(\chi,\omega)$ depends not only on $y$ but 
also on the value of $u_0(t,x)$. Furthermore, the technical assumption (\ref{eq:drift}) 
is precisely the compatibility condition for solving the cell problem for 
any value of $u_0(t,x)$. The obtained ansatz indicates that, in the limit, the 
bulk and surface concentrations are in equilibrium since the leading term 
for $v_\ve$ is $f(u_0)$ where $u_0$ is the leading term for $u_\ve$. 
Eventually, the effective diffusion 
(or dispersion) tensor $A^*(u_0)$ is given by
$$
\begin{array}{ll}
\dsp A_{ij}^*(u_0) = &\dsp \iy D(y) \left( \ny\chi_i + e_i\right) \cdot \left(\ny\chi_j + e_j\right) \, {\rm d}y \\[0.3cm]
& + \dsp\frac{\alpha\kappa}{(1+\beta u_0)^2}\ip \left[\chi_i - \omega_i\right]\left[\chi_j - \omega_j\right] \, {\rm d}\sigma(y)\\[0.3cm]
& + \dsp\frac{\alpha}{(1+\beta u_0)^2} \ip D^s(y) \left( \ny^s \omega_i+e_i\right) \cdot 
\left(\ny^s\omega_j + e_j\right) \, {\rm d}\sigma(y)\\[0.3 cm]
& + \dsp \iy D(y)\Big(\ny\chi_j\cdot e_i - \ny\chi_i\cdot e_j\Big)\, {\rm d}y\\[0.3 cm]
& +\dsp\frac{\alpha}{(1+\beta u_0)^2} \ip D^s(y)\Big(\ny^s\omega_j\cdot e_i - \ny^s\omega_i\cdot e_j\Big)\, {\rm d}\sigma(y)\\[0.3 cm]
&\dsp+ \iy \Big(b(y)\cdot\ny\chi_i\Big)\chi_j\, {\rm d}y +\frac{\alpha}{(1+\beta u_0)^2} \ip \Big(b^s(y)\cdot\ny^s\omega_i\Big)\omega_j\, {\rm d}\sigma(y).
\end{array}
$$
Remark that the dispersion matrix $A^*$ is neither symmetric nor a constant matrix. 
The fact that the non-linearity passed from the reaction term at the microscopic 
level to the diffusion term at the macroscopic one is another manifestation of 
the strong coupling of convection, diffusion and reaction in the homogenization 
process. For small values of the concentration $u_0$, the dispersion tensor $A^*(u_0)$ 
is close to the one obtained in the linear case. However, for large values of $u_0$, 
the saturation effect of the Langmuir isotherm implies that the entries of $A^*(u_0)$ 
are much smaller with a finite positive asymptote (see (\ref{formal-limit-u0-infin}) 
and the discussion in Section \ref{sec:num}). 
 
This article is outlined as follows. Section \ref{sec:max} deals with the maximum principle 
(see Proposition \ref{prop:max}) and uniform a priori estimates on the solutions of (\ref{eq:p-1})-(\ref{eq:p-3}) 
which are obtained via energy estimates (see Lemma \ref{lem:apriori}). 
In passing, the obtained a priori estimates yield existence 
and uniqueness of the solution of (\ref{eq:p-1})-(\ref{eq:p-3}) by standard 
arguments relying on the monotone character of the Langmuir isotherm 
(see Proposition \ref{prop:exist}). 
The non-linearity of (\ref{eq:p-1})-(\ref{eq:p-3}) requires some strong compactness 
of the sequence of solutions in order to pass to the limit. This is obtained in 
Corollary \ref{cor:comp-u} which is the most technical result of the present 
paper. Following the ideas of \cite{Marusic05, AllairePiatnitski}, we 
first show that, in a moving frame of reference, a uniform localization of 
solution holds (Lemma \ref{lem:localization}). Then a time equicontinuity 
type result (Lemma \ref{lem:combi}) allows us to gain compactness. These 
technical results are not straightforward extensions of those in \cite{Marusic05, 
AllairePiatnitski}. There are a number of additional difficulties, 
including the perforated character of the domain, the non-linearity of the 
equations and more importantly the fact that there are two unknowns $u_\ve$ 
and $v_\ve$. 
Section \ref{sec:2sc} is dedicated to the derivation of the homogenized equation 
(Theorem \ref{main-weak}) using the method of two-scale convergence with drift 
\cite{Marusic05, Allaire08}. The essence of this method is briefly recalled 
in Propositions \ref{compact} to \ref{h1-bdry-conv}. 
Theorem \ref{main-weak} gives a result of weak convergence of the sequence 
$(u_\ve, v_\ve)$ to the homogenized limit $(u_0, v_0=f(u_0))$. Although the 
previous Corollary \ref{cor:comp-u} gives some strong compactness in the 
$L^2$-norm, there is still room to improve the strong convergence, notably 
for the gradients of $u_\ve$ and $v_\ve$. This is the purpose of 
Section \ref{sec:strong} where we establish a strong convergence result 
(Theorem \ref{thm:strong}) for well prepared initial data. 
Eventually, Section \ref{sec:num} is devoted to some numerical simulations in two space dimensions using 
the FreeFem++ package \cite{Pirofreefem}. In the $2D$ setting, assumption (\ref{eq:drift}) 
implies that the homogenized drift vanishes i.e., $b^*=0$. We study the behavior of the 
homogenized dispersion tensor with respect to variations of the magnitude of $u_0$, 
the reaction rate $\kappa$ and the surface molecular diffusion $D^s$.
The results of the present paper are part of the PhD thesis of the second author 
which contains additional details, see \cite{Harsha}.

\section{Maximum principles and a priori estimates}
\label{sec:max}

The goal of this section is to prove a maximum principle, to derive uniform (with respect to $\ve$) a priori estimates based 
on energy equality and to deduce an existence and uniqueness result for the 
solution of (\ref{eq:p-1})-(\ref{eq:p-3}).

\begin{defn}
\label{defn:wsoln}
A pair $(u_\ve, v_\ve)\in L^2((0,T);H^1(\Omega_\ve))\times L^2((0,T);H^1(\partial\Omega_\ve))$ with
 $\Big(\dsp\frac{\partial u_\ve}{\partial t}, \frac{\partial v_\ve}{\partial t}\Big)\in L^2((0,T);(H^1(\Omega_\ve))')\times L^2((0,T);(H^1(\partial\Omega_\ve))')$ is said to be a weak solution of the coupled system (\ref{eq:p-1})-(\ref{eq:p-3}) provided we have
\begin{equation}
\label{eq:wsoln:in}
(u_\ve, v_\ve)(0)=(u^{in}, v^{in})
\end{equation}
and for a.e. time $0\le t\le T$, we have
\begin{equation}
\label{eq:wsoln}
\begin{array}{cc}
\dsp \int_{\Omega_\ve} \frac{\partial u_\ve}{\partial t} \phi \, {\rm d}x + \frac{1}{\ve}\int_{\Omega_\ve} b_\ve \cdot \nabla u_\ve \phi\, {\rm d}x
+ \int_{\Omega_\ve} D_\ve \nabla u_\ve \cdot \nabla \phi\, {\rm d}x\\[0.3 cm]
\dsp+ \ve \int_{\partial\Omega_\ve} \frac{\partial v_\ve}{\partial t}\psi \, {\rm d}\sigma(x) + \int_{\partial\Omega_\ve} b^s_\ve \cdot \nabla^s v_\ve \psi\, {\rm d}\sigma(x) + \ve\int_{\partial\Omega_\ve} D_\ve^s \nabla^s v_\ve \cdot \nabla^s \psi\, {\rm d}\sigma(x)\\[0.2 cm]
\dsp+ \frac{\kappa}{\ve}\int_{\partial\Omega_\ve}\left(f(u_\ve) - v_\ve\right) \left(\phi - \psi\right)\, {\rm d}\sigma(x)= 0,
\end{array}
\end{equation}
for each pair $(\phi, \psi)\in H^1(\Omega_\ve)\times H^1(\partial\Omega_\ve)$.
\end{defn}

In (\ref{eq:wsoln}) ${\rm d}\sigma(x)$ is the surface measure on $\partial\Omega_\ve$. 
In truth the integrals of the time derivatives in (\ref{eq:wsoln}) should be replaced by 
the corresponding duality pairings of $(H^1(\Omega_\ve))'$ and $H^1(\Omega_\ve)$ on the one hand, 
and of $(H^1(\partial\Omega_\ve))'$ and $H^1(\partial\Omega_\ve)$ on the other hand. 
We indulge ourselves with this usual abuse of notations which simplify the exposition. 
By the well-known Aubin-Lions lemma, the solution is continuous in time, namely 
$u_\ve\in C([0,T];L^2(\Omega_\ve))$ and $v_\ve\in C([0,T];L^2(\partial\Omega_\ve))$, 
so that the initial condition makes sense in (\ref{eq:wsoln:in}).

\begin{prop}
\label{prop:exist}
Assume that the initial data $(u^{in},v^{in})$ belong to the space 
$L^2(\mathbb{R}^d)\cap L^\infty(\mathbb{R}^d) \times H^1(\mathbb{R}^d)\cap L^\infty(\mathbb{R}^d)$ 
and are non-negative. 
There exists a unique weak solution $(u_\ve,v_\ve)$ of (\ref{eq:p-1})-(\ref{eq:p-3}) 
in the sense of Definition \ref{defn:wsoln}.
\end{prop}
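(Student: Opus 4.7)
The plan is to prove existence by a Galerkin approximation and uniqueness by an energy argument, both exploiting that the Langmuir isotherm $f(u)=\alpha u/(1+\beta u)$ is monotone non-decreasing and globally Lipschitz on $\mathbb{R}_+$ with $\|f'\|_\infty\le\alpha$.

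\emph{Existence.} I would first extend $f$ to $\mathbb{R}$ by $\tilde f(u):=f(\max(u,0))$, still globally Lipschitz with constant $\alpha$. Pick Hilbert bases $(\phi_k)$ of $H^1(\Omega_\ve)$ and $(\psi_k)$ of $H^1(\partial\Omega_\ve)$ and look for Galerkin approximations $u_\ve^N=\sum_{k\le N}a_k^N(t)\phi_k$ and $v_\ve^N=\sum_{k\le N}b_k^N(t)\psi_k$ satisfying the finite-dimensional projection of (\ref{eq:wsoln}) (with $\tilde f$ in place of $f$), with initial data given by the $L^2$-projections of $u^{in},v^{in}$. Since $\tilde f$ is Lipschitz and the trace $H^1(\Omega_\ve)\to L^2(\partial\Omega_\ve)$ is continuous, the resulting ODE system has Lipschitz right-hand side, and Cauchy-Lipschitz gives local existence; the energy estimates of Lemma \ref{lem:apriori}, applied to the Galerkin pair itself, then provide uniform-in-$N$ bounds of $(u_\ve^N,v_\ve^N)$ in $L^\infty(0,T;L^2)\cap L^2(0,T;H^1)$ and dual-space bounds on the time derivatives, yielding global existence. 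Extracting a subsequence converging weakly in $L^2(0,T;H^1)$ and, by Aubin-Lions, strongly in $L^2((0,T)\times\Omega_\ve)$ and $L^2((0,T)\times\partial\Omega_\ve)$ allows passage to the limit in the nonlinear term. The maximum principle (Proposition \ref{prop:max}) applied to the limit yields $u_\ve\ge 0$, so $\tilde f(u_\ve)=f(u_\ve)$ and $(u_\ve,v_\ve)$ is a weak solution of (\ref{eq:p-1})-(\ref{eq:p-3}).

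\emph{Uniqueness.} Given two weak solutions $(u_\ve^i,v_\ve^i)_{i=1,2}$, set $U=u_\ve^1-u_\ve^2$ and $V=v_\ve^1-v_\ve^2$, subtract the two weak formulations, and test with $(\phi,\psi)=(U,V)$. The convection terms vanish after integration by parts thanks to $\div_y b=0$, $b\cdot n=0$ in $Y^0$ and $\div_y^s b^s=0$ on $\partial\Sigma^0$, and the diffusion terms are non-negative by coercivity of $D_\ve$ and $D_\ve^s$. The reaction contribution rewrites as
\begin{equation*}
\frac{\kappa}{\ve}\int_{\partial\Omega_\ve}[f(u_\ve^1)-f(u_\ve^2)]\,U\,{\rm d}\sigma+\frac{\kappa}{\ve}\int_{\partial\Omega_\ve}V^2\,{\rm d}\sigma-\frac{\kappa}{\ve}\int_{\partial\Omega_\ve}\bigl\{[f(u_\ve^1)-f(u_\ve^2)]+U\bigr\}V\,{\rm d}\sigma,
\end{equation*}
where the first two summands are non-negative (monotonicity of $f$ and positivity of the square) while the last is controlled using $|f(u_\ve^1)-f(u_\ve^2)|\le\alpha|U|$, Young's inequality, and an $\ve$-dependent trace estimate $\|U\|_{L^2(\partial\Omega_\ve)}^2\le C(\ve)(\|U\|_{L^2(\Omega_\ve)}^2+\|\nabla U\|_{L^2(\Omega_\ve)}^2)$ whose gradient part is absorbed into the coercive bulk diffusion. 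Gronwall's inequality on $t\mapsto\|U(t)\|_{L^2(\Omega_\ve)}^2+\ve\|V(t)\|_{L^2(\partial\Omega_\ve)}^2$, together with the vanishing initial data, then forces $U\equiv V\equiv 0$.

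The main obstacle lies precisely in those cross terms on the pore boundary: unlike $[f(u_\ve^1)-f(u_\ve^2)]U$ they are not sign-definite and must be absorbed through trace inequalities into the coercive bulk diffusion, with constants depending on $\ve$. Since Proposition \ref{prop:exist} is a statement at fixed $\ve$, this $\ve$-dependence is harmless, but it already foreshadows why the uniform-in-$\ve$ estimates pursued later in the paper require more delicate arguments.
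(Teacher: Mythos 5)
Your proposal is correct and follows essentially the same route the paper indicates (but does not write out): a Galerkin scheme whose solvability and compactness rest on the global Lipschitz bound of the Langmuir isotherm, and uniqueness from its monotonicity combined with a Gronwall argument where the boundary cross terms are absorbed via an $\ve$-dependent trace inequality. One caveat: you cannot literally apply Lemma \ref{lem:apriori} ``to the Galerkin pair itself,'' since its proof tests with the nonlinear function $f(u_\ve)$, which does not lie in the finite-dimensional Galerkin space, and invokes the maximum principle of Proposition \ref{prop:max}, which is not available for the approximants; at that stage you should instead test with $(u_\ve^N,\ve v_\ve^N)$ and accept $\ve$-dependent (but $N$-uniform) constants, which suffices for the fixed-$\ve$ compactness you need.
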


The above existence result relies on a maximum principle that we shall 
prove assuming that a weak solution of (\ref{eq:p-1})-(\ref{eq:p-3}) exists. 
We use the standard notations: $h^+ = \max(0,h)$ and $h^- = \min(0,h)$. 
Recall that the function $f(u)=\alpha u/(1+\beta u)$ is one to one and 
increasing from $\R^+$ to $[0,\alpha/\beta]$. Although the function $f(u)$ 
is not defined for $u=-1/\beta$, and since we are interested only in 
non-negative values of $u$, we can modify and mollify $f(u)$ for $u<0$ 
so that it is an increasing function on $\R$ which grows at most linearly 
at infinity with a uniformy bounded derivative. 
With this modification, all computations below make sense for negative values 
of $u$. In particular, if $u$ is a function in $H^1(\mathbb{R}^d)$ so is $f(u)$. 

\begin{prop}
\label{prop:max}
Let $(u_\ve,v_\ve)$ be a weak solution of (\ref{eq:p-1})-(\ref{eq:p-3}) in the 
sense of Definition \ref{defn:wsoln}. Assume that the initial data 
$(u^{in},v^{in})$ satisfy $0\leq u^{in} \leq M_u$, $0\leq v^{in}\leq M_v$ 
for some positive constants $M_u$ and $M_v$ (without loss of generality consider $f(M_u)<M_v$).

If $M_v<\alpha/\beta$, then 
$$
\left\{ 
\begin{array}{ll}
0\leq u_\ve(t,x) \leq m_u=f^{-1}(M_v) & \mbox{ for } (t,x)\in(0,T)\times\Omega_\ve ,\\
0\leq v_\ve(t,x)\leq M_v & \mbox{ for } (t,x)\in(0,T)\times\partial\Omega_\ve .
\end{array}\right.
$$

If $M_v\geq\alpha/\beta$, then there exist three positive constants $\tau$, $M(\tau)$ 
and $\tilde M_v<\alpha/\beta$, independent of $\ve$, such that
$$
\left\{ 
\begin{array}{ll}
0\leq u_\ve(t,x) \leq M(\tau) & \mbox{ for } (t,x)\in(0,\ve^2\tau)\times\Omega_\ve , \\
0\leq v_\ve(t,x)\leq M_v & \mbox{ for } (t,x)\in(0,\ve^2\tau)\times\partial\Omega_\ve ,
\end{array}\right.
$$
and
$$
\left\{ 
\begin{array}{ll}
0\leq u_\ve(t,x) \leq \tilde m_u=f^{-1}(\tilde M_v) & \mbox{ for } (t,x)\in(\ve^2\tau,T)\times\Omega_\ve ,\\
0\leq v_\ve(t,x)\leq\tilde  M_v & \mbox{ for } (t,x)\in(\ve^2\tau,T)\times\partial\Omega_\ve .
\end{array}\right.
$$
\end{prop}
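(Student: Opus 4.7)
The plan is to prove the three statements in order: non-negativity, the upper bound in the subcritical regime $M_v<\alpha/\beta$, and reduction of the supercritical regime $M_v\ge\alpha/\beta$ to the previous one after a fast transient. For non-negativity I would test \eqref{eq:wsoln} with $\phi=-u_\ve^-$ and $\psi=-v_\ve^-$. The convection terms vanish after integration by parts using $\div_y b=0$ together with $b\cdot n=0$ in the bulk and $\div^s_y b^s=0$ on the closed surface $\partial\Omega_\ve$, and the diffusion contributions are coercive of the correct sign. The coupling integral $-\tfrac{\kappa}{\ve}\int_{\partial\Omega_\ve}(f(u_\ve)-v_\ve)(u_\ve^- - v_\ve^-)\,d\sigma$ is non-positive once the extension of $f$ to $\R^-$ is chosen (e.g.\ $f(u)=u$ on $\R^-$) so that $(f(u)-v)(u-v)\ge 0$ for $u,v\le 0$; the two mixed-sign regions follow directly from $f(u)\ge 0$ on $\R^+$. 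Since $(u^{in})^-=(v^{in})^-=0$, Gronwall then yields $u_\ve,v_\ve\ge 0$.

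In the subcritical case $M_v<\alpha/\beta$ the key observation is that the constant pair $(m_u,M_v)$ with $m_u=f^{-1}(M_v)$ is a stationary solution of \eqref{eq:p-1}--\eqref{eq:p-2b}, so that the shifted unknowns $A:=u_\ve-m_u$ and $B:=v_\ve-M_v$ satisfy the same system with reaction term $f(u_\ve)-v_\ve=g(A)-g(\mu_\ve)$, where $g(s):=f(m_u+s)-f(m_u)$ is strictly increasing and $\mu_\ve:=f^{-1}(v_\ve)-m_u$ (one regularizes $f^{-1}$ to a globally Lipschitz increasing function on $\R$ if necessary so that $\mu_\ve\in H^1$). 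The decisive step is to test with $\phi=A^+$ and, crucially, $\psi=\mu_\ve^+$ rather than the naive choice $\psi=B^+$. The coupling integrand then becomes $(g(A)-g(\mu_\ve))(A^+-\mu_\ve^+)$, which is pointwise non-negative in all four sign regions of $(A,\mu_\ve)$ directly from monotonicity of $g$. The bulk time-derivative produces $\tfrac{d}{dt}\tfrac12\|A^+\|_{L^2(\Omega_\ve)}^2$ while the surface time-derivative against $\mu_\ve^+$ integrates, by chain rule, into $\ve\tfrac{d}{dt}\int_{\partial\Omega_\ve}\Phi(v_\ve)\,d\sigma$ with $\Phi(v):=\int_0^v(f^{-1}(s)-m_u)^+\,ds\ge 0$ vanishing for $v\le M_v$; the surface convection vanishes and the surface diffusion stays non-negative since $(f^{-1})'>0$. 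Because $u^{in}\le M_u<m_u$ and $v^{in}\le M_v$ one has $A^+(0)=\Phi(v^{in})=0$, so the energy stays identically zero and hence $u_\ve\le m_u$, $v_\ve\le M_v$.

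For the supercritical case $M_v\ge\alpha/\beta$ the above shift cannot be applied directly since $f^{-1}(M_v)$ is undefined. I would split the time interval at $t=\ve^2\tau$. On $(0,\ve^2\tau)$ the bound $u_\ve\le M(\tau)$ follows from a Gronwall estimate on $\|(u_\ve-K)^+\|_{L^2(\Omega_\ve)}^2$: the $1/\ve^2$ produced by the coupling is absorbed by the length $\ve^2\tau$ of the interval, giving an $\ve$-uniform constant, and the preservation of $v_\ve\le M_v$ on this short piece is obtained by testing the $v_\ve$-equation against $(v_\ve-M_v)^+$ and noting that the reaction is dissipative whenever $v_\ve$ exceeds the asymptote $\alpha/\beta$. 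The heart of the argument is then that the rate $\kappa/\ve^2$ in the $v_\ve$-equation forces relaxation of $v_\ve$ towards the range $[0,\alpha/\beta)$ of $f(u_\ve)$ on the fast scale $\ve^2/\kappa$: testing against $(v_\ve-\tilde M_v)^+$ for some $\tilde M_v\in(\alpha/\beta,M_v)$ yields an exponential-in-$\kappa\tau$ decay which allows me to pick $\tau$ (uniformly in $\ve$) so that $v_\ve(\ve^2\tau,\cdot)\le\tilde M_v<\alpha/\beta$. Restarting the system at $t=\ve^2\tau$ with these new initial data and invoking the subcritical case completes the proof. The main obstacle is precisely this reduction: keeping $\tau$, $M(\tau)$, $\tilde M_v$ all independent of $\ve$ requires careful book-keeping of the $\ve$-powers produced during the fast transient, and an appropriate choice of the truncation levels $K$ and $\tilde M_v$.
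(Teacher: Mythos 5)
Your overall architecture matches the paper's: non-negativity via testing with negative parts, a truncation at the level $m_u=f^{-1}(M_v)$ when $M_v<\alpha/\beta$, and a two-stage reduction through a transient of length $\ve^2\tau$ when $M_v\geq\alpha/\beta$. The non-negativity step and Case I are workable variants of the paper's argument (the paper tests with $(f(u_\ve)-M_v)^+$ and $\ve(v_\ve-M_v)^+$, which sidesteps your need to invert $f$): but note that with a regularized $\tilde f^{-1}$ the identity $f(u_\ve)-v_\ve=g(A)-g(\mu_\ve)$ fails wherever $v_\ve\geq\alpha/\beta$, and there the coupling integrand $(f(u_\ve)-v_\ve)(A^+-\mu_\ve^+)$ can become negative (take $u_\ve$ large and $v_\ve$ slightly above $\alpha/\beta$). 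You must therefore first rule out $v_\ve\geq\alpha/\beta$ in Case I; this does follow from the $v_\ve$-equation alone as in (\ref{eq:firststep})--(\ref{eq:vbound}) since $M_v<\alpha/\beta$, but it needs to be said.

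The genuine gap is the pointwise bound $u_\ve\leq M(\tau)$ on $(0,\ve^2\tau)$ in the supercritical case. A Gronwall estimate on $\|(u_\ve-K)^+\|_{L^2(\Omega_\ve)}^2$ cannot deliver an $L^\infty$ bound: the boundary flux $\frac{\kappa}{\ve^2}\left(f(u_\ve)-v_\ve\right)$ genuinely pumps mass into the bulk when $v_\ve>f(u_\ve)$, so the truncated energy does not remain zero for any fixed level $K$, and a bound on its $L^2$ size says nothing pointwise. This $L^\infty$ bound is not a luxury: it is exactly what lets one replace the crude relaxation target $\alpha/\beta$ by $f(M(\tau))<\alpha/\beta$. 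With only $f(u_\ve)<\alpha/\beta$ the exponential relaxation (\ref{eq:vbound}) drives $v_\ve$ towards $\alpha/\beta$ from above and never strictly below it, so your final step would not produce a $\tilde M_v<\alpha/\beta$ (note also the slip: you take $\tilde M_v\in(\alpha/\beta,M_v)$ yet conclude $\tilde M_v<\alpha/\beta$). The ingredient you are missing is the auxiliary periodic corrector $\Psi$ of (\ref{eq:auxmax}), whose rescaling converts the $O(\kappa M_v/\ve^2)$ surface flux into a bulk source; the comparison function $z_\ve=\Psi_\ve+u_\ve-\frac{\kappa M_v}{\ve^2}\frac{|\partial\Sigma^0|}{|Y^0|}\,t$ then satisfies (\ref{eq:utilde}) with non-negative outgoing flux, the classical maximum principle applies, and the linear-in-time loss is exactly $O(\tau)$ on $(0,\ve^2\tau)$, yielding an $\ve$-independent $M(\tau)$. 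You need this comparison-function device (or an equivalent Stampacchia iteration with a time-dependent truncation level) to close Case II; the rest of your reduction then goes through as in the paper.
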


\begin{proof}
We use a variational approach. To begin with, we prove that the solutions remain 
non-negative for non-negative initial data. Let us consider $({f(u_\ve)}^-, \ve v_\ve^-)$ 
as test functions in the variational formulation of (\ref{eq:p-1})-(\ref{eq:p-3}):
$$
\iti\frac{{\rm d}}{{\rm d}t} \id  {F(u^-_\ve)} \, {\rm d}x \, {\rm d}t + 
\frac{\ve}{2}\iti\frac{{\rm d}}{{\rm d}t}\ib |v^-_\ve|^2 \, {\rm d}\sigma(x)\, {\rm d}t +
\frac{1}{\ve}\iti\id b_\ve \cdot \nabla {F(u^-_\ve)} \, {\rm d}x\, {\rm d}t
$$
$$
- \iti\id \div\left(D_\ve \nabla u_\ve\right) {f(u_\ve)}^- \, {\rm d}x\, {\rm d}t + 
\frac{\ve}{2}\iti\ib b_\ve^s \cdot \nabla^s |v_\ve^-|^2 \, {\rm d}\sigma(x)\, {\rm d}t 
$$
$$
-\ve\iti\ib \div^s\left(D_\ve^s \nabla^s v_\ve\right) v_\ve^- \, {\rm d}\sigma(x)\, {\rm d}t
- \frac{\kappa}{\ve} \iti\ib \left[f(u_\ve) - v_\ve \right] v_\ve^- \, {\rm d}\sigma(x)\, {\rm d}t=0 ,
$$
where $f$ and its primitive $F$ are defined by (\ref{eq:fprim}). 
The convective terms in the above expression vanish due to the divergence free property 
of $b, b^s$ and the boundary condition $b\cdot n=0$ on $\partial\Sigma_0$. Thus, we get
$$
\id  {F(u^-_\ve)}(T) \, {\rm d}x +\frac{\ve}{2}\ib |v^-_\ve(T)|^2 \, {\rm d}\sigma(x) + \iti\id f'(u^-_\ve) D_\ve \nabla u^-_\ve\cdot \nabla u^-_\ve \, {\rm d}x\, {\rm d}t 
$$
$$
+ \ve\iti\ib D_\ve^s \nabla^s v^-_\ve \cdot \nabla^s v_\ve^- \, {\rm d}\sigma(x)\, {\rm d}t + \frac{\kappa}{\ve} \iti\ib \left[f(u_\ve) - v_\ve \right] \left[{f(u_\ve)}^- - v_\ve^- \right] \, {\rm d}\sigma(x)\, {\rm d}t
$$
$$
= \id F(u^-_\ve)(0) \, {\rm d}x + \frac{\ve}{2} \ib |v_\ve^-(0)|^2 \, {\rm d}\sigma(x) .
$$
Since the function $h \to h^-$ is monotone and $f'(u)\geq0$, all terms on the 
left hand side of the above equation are non-negative. The assumption of the non-negative initial data implies 
that the right hand side vanishes, therefore proving that $u^-_\ve(t,x) = 0$, $v^-_\ve(t,x) = 0$. 
Thus the solutions $u_\ve$ and $v_\ve$ stay non negative at all times. 

Next, we show that the solutions stay bounded from above if we start with a bounded initial data. 
The boundedness property of $f$ adds an additional difficulty prompting us to consider two cases as below.

{\bf Case I.} Assume $M_v<\alpha/\beta$ so that we can define $m_u=f^{-1}(M_v)>M_u$.

We choose $(({f(u_\ve)} - M_v)^+, \ve (v_\ve - M_v)^+)$ as test functions in the variational 
formulation of (\ref{eq:p-1})-(\ref{eq:p-3}). Introducing the primitive function ${\mathcal F}$ 
such that ${\mathcal F}'(u) = (f(u)-M_v)^+$ and ${\mathcal F}(0) =0$, we get
$$
\id {\mathcal F}(u_\ve)(T)\, {\rm d}x + \frac{\ve}{2} \ib |(v_\ve - M_v)^+|^2 \, {\rm d}\sigma(x) 
$$
$$
+ \iti\id f'(u_\ve) D_\ve \nabla (u_\ve - m_u)^+ \cdot \nabla (u_\ve - m_u)^+ \, {\rm d}x \, {\rm d}t
$$
$$
 + \ve\iti\ib D_\ve^s\nabla^s (v_\ve - M_v)^+ \cdot \nabla^s (v_\ve - M_v)^+ \, {\rm d}\sigma(x)\, {\rm d}t 
$$
$$
+ \frac{\kappa}{\ve} \iti\ib \left[(f(u_\ve) - M_v) - (v_\ve - M_v) \right] \left[({f(u_\ve)} -  M_v)^+ - (v_\ve - M_v)^+ \right] \, {\rm d}\sigma(x)\, {\rm d}t  
$$
$$
= \id {\mathcal F}(u^{in})\, {\rm d}x + \frac{\ve}{2} \ib ((v^{in} - M_v)^+)^2 \, {\rm d}\sigma(x) ,
$$
because $\nabla ({f(u_\ve)} - M_v)^+ = f'(u_\ve) \nabla (u_\ve - m_u)^+$. 
The upper bound on the initial data implies 
that the right hand side vanishes. The left hand side is non-negative 
because $h \to h^+$ is monotone and $f'(u_\ve)\geq0$. Since ${\mathcal F}(u) = 0$ 
if and only if $u \leq m_u$, we deduce that $u_\ve\leq m_u$ and $v_\ve \leq M_v$.

{\bf Case II.} Assume $M_v\geq\alpha/\beta$.

The argument in Case I fails because $f^{-1}(M_v)$ is not well defined. 
The idea is to first prove that there exists $\tau>0$ such that, after a 
short time $\ve^2\tau$, the solution $v_\ve$ reduces in magnitude and is 
uniformly smaller than $\alpha/\beta$. Then by taking $\ve^2\tau$ as a 
new initial time we can repeat the analysis of Case I. 
Whatever the value of $u_\ve$, equation (\ref{eq:p-2}) implies the following 
inequality on the boundary $\partial\Omega_\ve$:
\begin{equation}
\label{eq:firststep}
\frac{\partial v_\ve}{\partial t} + \frac{1}{\ve}b^s_\ve \cdot \nabla^s v_\ve - \div^s\left(D^s_\ve \nabla^s {v_\ve}\right) \leq \frac \kappa {\ve^2}\Big(\frac \alpha \beta - v_\ve\Big) .
\end{equation}
Choosing $w_\ve = \left( v_\ve - \frac \alpha \beta \right) \exp(t\kappa/\ve^2)$ we get
$$
\frac{\partial w_\ve}{\partial t} + \frac{1}{\ve}b^s_\ve \cdot \nabla^s w_\ve - \div^s\left(D^s_\ve \nabla^s {w_\ve}\right) \leq 0
$$
with the initial data
$$
w_\ve(0)= v^{in} - \frac \alpha \beta .
$$
Then, the maximum principle implies (see \cite{Protter84} for details)
$$
w_\ve(t)\leq \max_{x\in\partial\Omega_\ve} w_\ve(0) \leq M_v - \frac \alpha \beta ,
$$
which yields the following upper bound:
\begin{equation}
\label{eq:vbound}
v_\ve(t) \leq \exp(-t\kappa/\ve^2) M_v 
+ \Big(1 - \exp(-t\kappa/\ve^2) \Big) \frac \alpha \beta . 
\end{equation}
Unfortunately (\ref{eq:vbound}) is too crude a bound which cannot reduce the 
initial bound $M_v$ to a number smaller than $\alpha/ \beta$. At least, (\ref{eq:vbound}) 
yields $v_\ve(t) \leq M_v$. We are going to use this upper bound in the equation for 
$u_\ve$ in order to improve (\ref{eq:vbound}). 

Equations (\ref{eq:p-1}) and (\ref{eq:p-2b}) are:
$$
\frac{\partial {u_\ve}}{\partial t} + \frac{1}{\ve}b_\ve \cdot \nabla u_\ve - \div\left(D_\ve \nabla {u_\ve}\right) = 0 \quad \mbox{ in } \Omega_\ve ,
$$ 
$$
-\frac{D_\ve}{\ve} \nabla u_\ve \cdot n = \frac{\kappa}{\ve^2} \left[\frac{\alpha u_\ve}{1+\beta u_\ve} - v_\ve \right] =: \frac{\kappa}{\ve^2} g_\ve \quad \mbox{ on } \partial\Omega_\ve ,
$$
with $g_\ve$ satisfying, thanks to (\ref{eq:vbound}), the bound $|g_\ve|\leq M_v$.
Let us introduce an auxiliary problem in the unit cell:
\begin{equation}
\left\{
\begin{array}{ll}
b\cdot\nabla_y \Psi - \div_y(D\nabla_y \Psi) = \kappa M_v |\partial\Sigma^0|/|Y^0| &\textrm{in} \: \: Y^0, \\
-D(y)\nabla_y \Psi\cdot n = \kappa M_v &\textrm{on} \: \: \partial\Sigma^0, \\
y \to \Psi(y) & \textrm{is} \: \:Y-\textrm{periodic,}
\end{array} \right.
\label{eq:auxmax}
\end{equation}
where the compatibility condition for the existence and uniqueness (up to an additive constant) 
of $\Psi$ is satisfied. The scaled function $\Psi_\ve(x)=\Psi(x/\ve)$ satisfies:
\begin{equation}
\left\{
\begin{array}{ll}
\dsp \frac 1\ve b_\ve\cdot\nabla \Psi_\ve - \div(D_\ve\nabla \Psi_\ve) = \frac{\kappa M_v}{\ve^2} |\partial\Sigma^0|/|Y^0| &\textrm{in} \: \: \Omega_\ve, \\[0.4cm]
\dsp -\frac 1 \ve D_\ve\nabla \Psi_\ve\cdot n = \frac{\kappa M_v}{\ve^2} &\textrm{on} \: \: \partial\Omega_\ve .
\end{array} \right.
\label{eq:auxmaxscl}
\end{equation}
The function $z_\ve = \Psi_\ve + u_\ve - (\frac{\kappa M_v}{\ve^2}|\partial\Sigma^0|/|Y^0|) t$ 
satisfies:
\begin{equation}
\left\{
\begin{array}{ll}
\dsp \frac{\partial {z_\ve}}{\partial t} + \frac{1}{\ve}b_\ve \cdot \nabla z_\ve - \div\left(D_\ve \nabla {z_\ve}\right) = 0 &\textrm{in} \: \: (0,T)\times\Omega_\ve, \\[0.4cm]
\dsp - \frac{1}{\ve} D_\ve\nabla z_\ve \cdot n = \frac{\kappa}{\ve^2} (g_\ve + M_v) \geq 0 &\textrm{on} \: \: (0,T)\times\partial\Omega_\ve, \\[0.4cm]
z_\ve(0) = \Psi_\ve + u^{in} &\textrm{in} \: \: \Omega_\ve .
\end{array} \right.
\label{eq:utilde}
\end{equation}
Then, the maximum principle yields (again, see \cite{Protter84} if necessary)
$$
z_\ve(t) \leq \max_{\Omega_\ve} ( \Psi_\ve + u^{in} ) \leq M_u + \|\Psi\|_{L^\infty(Y^0)} .
$$
From the definition of $z_\ve$ we deduce:
\begin{equation}
\label{eq:uboundeps}
u_\ve(t) \leq  M_u + 2 \|\Psi\|_{L^\infty(Y^0)}
+ \frac{\kappa M_v}{\ve^2} |\partial\Sigma^0|/|Y^0| t ,
\end{equation}
which implies that, for any $\tau>0$, we have 
$$
\max_{0<t<\ve^2\tau} u_\ve(t) \leq M(\tau) =  M_u + 2 \|\Psi\|_{L^\infty(Y^0)}
+ {\kappa M_v} |\partial\Sigma^0|/|Y^0| \tau ,
$$
where $M(\tau)$ does not depend on $\ve$ and is an affine function of $\tau$. Hence
$$
\max_{0<t<\ve^2\tau} f(u_\ve(t)) \leq f(M(\tau)) = \frac{\alpha M(\tau)}{1+\beta M(\tau)} 
< \frac\alpha\beta
$$
and (\ref{eq:firststep}) can be improved as 
$$
\frac{\partial v_\ve}{\partial t} + \frac{1}{\ve}b^s_\ve \cdot \nabla^s v_\ve - \div^s\left(D^S_\ve \nabla^s {v_\ve}\right) \leq \frac \kappa {\ve^2}\Big( f(M(\tau)) - v_\ve\Big) 
\quad \mbox{ for } 0<t<\ve^2\tau .
$$
The same argument leading to (\ref{eq:vbound}) now gives that, for any $\tau>0$ 
and $0<t<\ve^2\tau$, 
\begin{equation}
\label{eq:vboundeps}
v_\ve(t) \leq \exp(-\kappa\tau) M_v + \Big( 1 - \exp(-\kappa\tau) \Big) f(M(\tau)) , 
\end{equation}
where $f(M(\tau)) <  \frac\alpha\beta - \frac C \tau$ for some positive constant $C>0$. 
Thus, choosing $\tau$ large enough, we deduce that there exists $\tilde M_v$ (equal to 
the right hand side of (\ref{eq:vboundeps})) which does 
not depend on $\ve$ such that
$$
\max_{0<t<\ve^2\tau} v_\ve(t) \leq \tilde M_v <  \frac\alpha\beta .
$$
Choosing $\tilde M_u = M(\tau)$, we obviously have $f(\tilde M_u) < \tilde M_v$ 
and we can repeat the argument of Case I with the new initial time $\ve^2\tau$.
\end{proof}

We know from Proposition \ref{prop:max} that the solutions of (\ref{eq:p-1})-(\ref{eq:p-3}) 
are uniformly bounded in the $L^\infty$-norm. This shall help us obtain uniform (with respect to $\ve$) a priori energy estimates. 

\begin{lem}
\label{lem:apriori}
Let $(u_\ve,v_\ve)$ be a weak solution of (\ref{eq:p-1})-(\ref{eq:p-3}) in the sense of Definition \ref{defn:wsoln} and the initial data 
$(u^{in},v^{in})$ be such that $0\leq u^{in} \leq M_u$, $0\leq v^{in}\leq M_v$. 
There exists a constant $C$ that depends on $M_u$ and $M_v$ but not on $\ve$ such that
\begin{equation}
\label{eq:apriori}
\begin{array}{ll}
\dsp \|u_\ve\|_{L^\infty((0,T);L^2(\Omega_\ve))} + \sqrt{\ve} \|v_\ve\|_{L^\infty((0,T);L^2(\partial\Omega_\ve))} \\[0.4cm]
\dsp + \|\nabla u_\ve\|_{L^2((0,T)\times\Omega_\ve)} 
+ \sqrt{\ve}\|\nabla^s v_\ve\|_{L^2((0,T)\times\partial\Omega_\ve)} \\[0.4cm]
\dsp + \sqrt{\ve} \|w_\ve\|_{L^\infty((0,T);L^2(\partial\Omega_\ve))} \leq C 
\left( \left\|u^{in}\right\|_{L^2(\mathbb{R}^d)} + \|v^{in}\|_{H^1(\mathbb{R}^d)}\right) ,
\end{array}
\end{equation}
where $w_\ve = \ve^{-1} \left(\frac{\alpha u_\ve}{1+\beta u_\ve} - v_\ve\right)$. 
\end{lem}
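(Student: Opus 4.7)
The plan is to prove the five bounds simultaneously by carrying out a single energy estimate, taking as test functions in the weak formulation (\ref{eq:wsoln}) the pair $(\phi,\psi) = (f(u_\ve), v_\ve)$. This is the natural choice for two reasons: first, $f(u_\ve)$ is an admissible $L^2(H^1(\Omega_\ve))$ test function because $u_\ve$ lies in $L^2(H^1(\Omega_\ve))$ and is uniformly bounded thanks to Proposition \ref{prop:max}, while $f$ is globally Lipschitz; second, with this choice the reaction term becomes $\frac{\kappa}{\ve}\int_{\partial\Omega_\ve}(f(u_\ve)-v_\ve)(\phi-\psi) = \frac{\kappa}{\ve}\int_{\partial\Omega_\ve}(f(u_\ve)-v_\ve)^2 = \kappa \ve \int_{\partial\Omega_\ve} w_\ve^2$, which is non-negative and contributes the dissipation needed for $\sqrt{\ve}\,w_\ve$.

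Next, I would handle the other terms one by one. The bulk time derivative becomes $\int_{\Omega_\ve}\partial_t u_\ve f(u_\ve) = \frac{\rm d}{{\rm d}t}\int_{\Omega_\ve} F(u_\ve)$, and the surface time derivative becomes $\frac{\ve}{2}\frac{\rm d}{{\rm d}t}\int_{\partial\Omega_\ve} v_\ve^2$. The convective terms vanish: in the bulk, $\frac{1}{\ve}\int_{\Omega_\ve} b_\ve\cdot\nabla u_\ve\, f(u_\ve) = \frac{1}{\ve}\int_{\Omega_\ve} b_\ve\cdot\nabla F(u_\ve) = 0$ by integration by parts, using $\div_y b = 0$ in $Y^0$ and $b\cdot n = 0$ on $\partial\Sigma^0$; on the surface, $\int_{\partial\Omega_\ve} b^s_\ve\cdot\nabla^s v_\ve\, v_\ve = \frac{1}{2}\int_{\partial\Omega_\ve} b^s_\ve\cdot\nabla^s(v_\ve^2) = 0$ by $\div^s_y b^s = 0$. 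The diffusion terms produce $\int_{\Omega_\ve} f'(u_\ve) D_\ve \nabla u_\ve\cdot\nabla u_\ve + \ve\int_{\partial\Omega_\ve} D^s_\ve \nabla^s v_\ve\cdot\nabla^s v_\ve$, which is coercive because $f'(u) = \alpha/(1+\beta u)^2 \geq \alpha/(1+\beta M_u)^2 > 0$ on the $L^\infty$-range of $u_\ve$ guaranteed by Proposition \ref{prop:max}, combined with the uniform coercivity of $D, D^s$.

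Integrating the resulting identity from $0$ to $t$ yields
$$
\int_{\Omega_\ve} F(u_\ve(t)) + \tfrac{\ve}{2}\int_{\partial\Omega_\ve} v_\ve^2(t)
+ c\!\int_0^t\!\|\nabla u_\ve\|^2 + c\ve\!\int_0^t\!\|\nabla^s v_\ve\|^2 + \kappa\ve\!\int_0^t\!\|w_\ve\|_{L^2(\partial\Omega_\ve)}^2
\leq \int_{\Omega_\ve} F(u^{in}) + \tfrac{\ve}{2}\int_{\partial\Omega_\ve} (v^{in})^2.
$$
Taking the supremum over $t\in(0,T)$ on the left, I would then extract the individual bounds. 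The two-sided estimate $c\,u^2\leq F(u)\leq C\,u^2$ (valid on $[0,M_u]$ since $f'(0)=\alpha>0$ and $f'$ is bounded on the range) converts $\int F(u_\ve(t))$ into control of $\|u_\ve\|_{L^\infty(L^2(\Omega_\ve))}$ and dominates $\int F(u^{in})$ by $\|u^{in}\|_{L^2(\R^d)}^2$. The surface integral of the initial data is handled by the scaled trace inequality on the periodic perforated domain, $\ve\int_{\partial\Omega_\ve} |v^{in}|^2 \,{\rm d}\sigma \leq C(\|v^{in}\|_{L^2(\R^d)}^2 + \|\nabla v^{in}\|_{L^2(\R^d)}^2)$, which is precisely why the hypothesis $v^{in}\in H^1(\R^d)$ appears on the right-hand side of (\ref{eq:apriori}).

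The main technical obstacle is verifying that $(f(u_\ve), v_\ve)$ is genuinely admissible in the duality sense (so that the time derivatives may be integrated by parts to yield total derivatives), and controlling the $L^\infty$-in-time bound for $\sqrt{\ve}\,w_\ve$: the standard energy identity above only produces an $L^2$-in-time dissipation bound $\sqrt{\ve}\|w_\ve\|_{L^2((0,T)\times \partial\Omega_\ve)}\leq C$. To upgrade to $L^\infty((0,T); L^2(\partial\Omega_\ve))$, I would look to refine the argument by exploiting the equation (\ref{eq:p-2}) in the form $\kappa w_\ve = \ve\,\partial_t v_\ve + b^s_\ve\cdot\nabla^s v_\ve - \ve\,\div^s(D^s_\ve\nabla^s v_\ve)$ and testing an appropriately time-differentiated version of the surface equation, using the extra $H^1$ regularity of $v^{in}$ to compensate for the large initial defect between $f(u^{in})$ and $v^{in}$ measured on $\partial\Omega_\ve$. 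This refinement is where the structure of the coupling (monotonicity of $f$) would need to be used once more, and is the step I would expect to require the most care.
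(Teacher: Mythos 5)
Your proof is correct and is essentially the paper's own argument: testing with $(f(u_\ve), v_\ve)$ in (\ref{eq:wsoln}) is the same as multiplying (\ref{eq:p-1}) by $f(u_\ve)$ and (\ref{eq:p-2}) by $\ve v_\ve$, the convective terms vanish by incompressibility, and the maximum principle gives the two-sided bound $0<\alpha/(1+\beta M)^2\le f'(u_\ve)\le\alpha$ (with $M$ the $L^\infty$ bound from Proposition \ref{prop:max}, which depends on both $M_u$ and $M_v$, not just $M_u$) that converts $F(u_\ve)$ and $\sqrt{f'(u_\ve)}\,\nabla u_\ve$ into genuine $L^2$ control, exactly as in the paper. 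The one point you leave open --- upgrading $\sqrt{\ve}\,w_\ve$ from $L^2$ in time to $L^\infty((0,T);L^2(\partial\Omega_\ve))$ --- is not established by the paper's proof either: it only derives $\ve\|w_\ve\|^2_{L^2((0,T)\times\partial\Omega_\ve)}\le C$ from the reaction term in the energy equality, and only that $L^2$-in-time bound is used subsequently (e.g.\ in Lemma \ref{lem:combi}), so no further refinement along the lines you sketch is needed; the $L^\infty$ in the statement of (\ref{eq:apriori}) should be read as $L^2$.
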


\begin{proof}
To obtain an energy equality we multiply (\ref{eq:p-1}) by $f(u_\ve)$ and integrate 
over $\Omega_\ve$:
$$
\frac{{\rm d}}{{\rm d}t} \id F(u_\ve) \, {\rm d}x + \id f'(u_\ve) D_\ve \nabla u_\ve \cdot \nabla u_\ve \, {\rm d}x
+ \frac{\kappa}{\ve} \ib \left(f(u_\ve) - v_\ve\right) f(u_\ve) \, {\rm d}\sigma(x) = 0 ,
$$
where $F$ is the primitive of $f$, defined by (\ref{eq:fprim}), which satisfies 
$F(u)\geq0$ for $u\geq0$. 
We next multiply (\ref{eq:p-2}) by $\ve v_\ve$ and integrate over $\partial\Omega_\ve$:
$$
\frac{\ve}{2} \frac{{\rm d}}{{\rm d}t} \ib |v_\ve|^2 \, {\rm d}\sigma(x) 
+ \ve \ib D^s_\ve\nabla^s v_\ve\cdot\nabla^s v_\ve \, {\rm d}\sigma(x) 
- \frac{\kappa}{\ve} \ib \left(f(u_\ve) - v_\ve\right) v_\ve \, {\rm d}\sigma(x) = 0 .
$$
Adding the above two expressions leads to the following energy equality:
$$
\frac{{\rm d}}{{\rm d}t} \id F(u_\ve) \, {\rm d}x + \frac{\ve}{2} \frac{{\rm d}}{{\rm d}t} \ib |v_\ve|^2 \, {\rm d}\sigma(x) 
+ \id f'(u_\ve) D_\ve \nabla u_\ve \cdot \nabla u_\ve \, {\rm d}x 
$$
\begin{equation}
\label{eq:energy}
+ \ve \ib D^s_\ve\nabla^s v_\ve\cdot\nabla^s v_\ve \, {\rm d}\sigma(x) 
+ \frac{\kappa}{\ve} \ib \left(f(u_\ve) - v_\ve\right)^2 \, {\rm d}\sigma(x) = 0 .
\end{equation}
Recalling that, because of the maximum principle of Proposition \ref{prop:max}, 
$F(u_\ve)\geq0$ and $f'(u_\ve)\geq0$, and integrating over time yields:
$$
\left\|F(u_\ve)\right\|_{L^\infty((0,T);L^1(\Omega_\ve))} 
+ \ve \|v_\ve\|^2_{L^\infty((0,T);L^2(\partial\Omega_\ve))}
+ \ve \left\|\frac{1}{\ve}\left(f(u_\ve) - v_\ve\right)\right\|^2_{L^2(\partial\Omega_\ve \times (0,T))}
$$
$$
+ \left\|\sqrt{f'(u_\ve)} \nabla u_\ve\right\|^2_{L^2(\Omega_\ve \times (0,T))} 
+ \ve \|\nabla^s v_\ve\|^2_{L^2(\partial\Omega_\ve \times (0,T))}
$$
$$
\leq C \left(\left\|F(u^{in})\right\|_{L^1(\mathbb{R}^d)} + \|v^{in}\|^2_{H^1(\mathbb{R}^d)}\right) .
$$
A second-order Taylor expansion at 0 yields $F(u) = \frac12 u^2 f'(c)$ for some $c \in (0,u)$. 
By the maximum principle, we have $0\leq u_\ve \leq M$ so that
$$
0 < \frac{\alpha}{1+\beta M} \leq f'(u_\ve) \leq \alpha , 
$$
and $\left\|F(u^{in})\right\|_{L^1(\mathbb{R}^d)} \leq C \left\|u^{in}\right\|^2_{L^2(\mathbb{R}^d)}$ while 
$$
\left\|F(u_\ve)\right\|_{L^\infty((0,T);L^1(\Omega_\ve))} 
+ \left\|\sqrt{f'(u_\ve)} \nabla u_\ve\right\|^2_{L^2(\Omega_\ve \times (0,T))} 
\geq 
$$
$$
C \left( \left\|u_\ve\right\|^2_{L^\infty((0,T);L^2(\Omega_\ve))} 
+ \left\|\nabla u_\ve\right\|^2_{L^2(\Omega_\ve \times (0,T))} \right)
$$
from which we deduce (\ref{eq:apriori}). 
\end{proof}

\begin{proof}[Proof of Proposition \ref{prop:exist}]
From Proposition \ref{prop:max} and Lemma \ref{lem:apriori} it is a classical 
matter to prove existence and uniqueness of the weak solution of (\ref{eq:p-1})-(\ref{eq:p-3}). 
Existence can be proved, for example, by a finite dimensional Galerkin approximation 
\cite{Lady68, Lions69}, 
while uniqueness is a consequence of the monotonicity of the Langmuir isotherm or 
of its globally Lipschitz property. Since these arguments are well known (see, e.g., 
for a very similar model, \cite{MP:06}), we do not reproduce them here (see \cite{Harsha}  
for details, if necessary). 
\end{proof}

In order to find the homogenized limit for (\ref{eq:p-1})-(\ref{eq:p-3}), 
we need to pass to the limit in its variational formulation as $\ve\to0$, 
which requires some strong compactness since (\ref{eq:p-1})-(\ref{eq:p-3}) 
is nonlinear.  
The a priori estimates of Lemma \ref{lem:apriori} allow us to extract weakly 
converging subsequences but they do not give any strong compactness for the sequence 
$(u_\ve,v_\ve)$ since uniform (with respect to $\ve$) a priori estimates on their time derivatives are lacking. 
Furthermore, since $\Omega_\ve$ is unbounded, Rellich theorem does not hold in 
$\Omega_\ve$ and a localization result is thus required to get compactness. 
This is the goal of the results to follow for the rest of this section which 
culminate in Corollary \ref{cor:comp-u}. Their proofs rely on the use of 
the equations (\ref{eq:p-1})-(\ref{eq:p-3}). There is however one additional 
hurdle which is the presence of large convective terms of order $\ve^{-1}$. 
In order to compensate this large drift, following the lead of \cite{Marusic05}, 
we shall prove these compactness and localization results, not for the original 
sequence $(u_\ve,v_\ve)$, but for its counterpart defined in a moving frame of 
reference. For $\varphi_\ve(t,x)$, let us define its counterpart in moving coordinates as
\begin{equation}
\label{def.drift}
\drift{\varphi}_\ve(t,x) = \varphi_\ve\left(t,x+\frac{b^*t}{\ve}\right) , 
\end{equation}
where $b^*$ is the effective drift defined by (\ref{eq:drift}). 
Of course, definition (\ref{def.drift}) is consistent with the notion of two-scale 
convergence with drift which shall be recalled in Section \ref{sec:2sc} (in particular, 
if $\varphi_\ve(t,x)=\varphi\left(t,x-\frac{b^*t}{\ve}\right)$ is a test function 
for two-scale convergence with drift, then $\drift{\varphi}_\ve=\varphi$). 
A ``symmetric" definition will be useful too:
\begin{equation}
\label{def.mdrift}
\mdrift{\varphi}_\ve(t,x) = \varphi_\ve\left(t,x-\frac{b^*t}{\ve}\right) .
\end{equation}
As we consider functions in moving coordinates, the underlying porous domain 
$\Omega_\ve$ does ``move" with the same velocity. Let us define
\begin{equation}
\label{eq:def-mov-domain}
\drift{\Omega}_\ve(t) = \Big\{x+\frac{b^*t}{\ve}:x\in\Omega_\ve\Big\} .
\end{equation}

\begin{lem}
\label{lem:localization}
Let $(u_\ve,v_\ve)$ be the solution of (\ref{eq:p-1})-(\ref{eq:p-3}). 
Fix a final time $T<+\infty$. 
Then, for any $\delta>0$, there exists $R(\delta)>0$ such that, for any $t\in[0,T]$,
$$
\left\|\drift{u}_\ve(t,x)\right\|_{L^2(\Omega_\ve(t)\cap Q^c_{R(\delta)})}\leq \delta,\hspace{1 cm}
\left\|\drift{v}_\ve(t,x)\right\|_{L^2(\partial\Omega_\ve(t)\cap Q^c_{R(\delta)})}\leq \delta ,
$$
where $Q^c_{R(\delta)}$ is the complement of the cube $Q_{R(\delta)}=]-R(\delta),+R(\delta)[^d$ 
in $\mathbb R^d$.
\end{lem}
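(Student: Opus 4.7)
I would work in the moving frame of reference, where $\drift{u}_\ve$ and $\drift{v}_\ve$ solve the equations obtained by applying $x\mapsto x+b^*t/\ve$ to (\ref{eq:p-1})--(\ref{eq:p-2b}); thus $\partial_t \drift{u}_\ve + \ve^{-1}(\drift{b}_\ve - b^*)\cdot \nabla \drift{u}_\ve - \div(\drift{D}_\ve \nabla \drift{u}_\ve)=0$ in $\drift{\Omega}_\ve(t)$, with an analogous surface equation for $\drift{v}_\ve$ coupled through $f(\drift{u}_\ve)-\drift{v}_\ve$. Fix a smooth cutoff $\phi_R\in C^\infty(\R^d)$ with $\phi_R\equiv 0$ on $Q_R$, $\phi_R\equiv 1$ outside $Q_{2R}$, and $|\nabla\phi_R|\le C/R$. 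I would then test the bulk equation against $f(\drift{u}_\ve)\phi_R^2$ and the surface equation against $\ve\, \drift{v}_\ve \phi_R^2$ (in exact analogy with the energy derivation of Lemma~\ref{lem:apriori}), add the identities, and integrate on $(0,t)$.

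The time derivative contributes $\int F(\drift{u}_\ve(t))\phi_R^2\, {\rm d}x + \tfrac{\ve}{2}\int |\drift{v}_\ve(t)|^2 \phi_R^2\, {\rm d}\sigma$; the two coupling terms combine into the non-negative quantity $(\kappa/\ve)\int_0^t\!\!\int (f(\drift{u}_\ve)-\drift{v}_\ve)^2 \phi_R^2\, {\rm d}\sigma\, {\rm d}\tau$; and the diffusion terms give their coercive principal parts plus cross-terms stemming from $\nabla(\phi_R^2)$, which are bounded by $C/R$ by Cauchy--Schwarz and the a priori estimates of Lemma~\ref{lem:apriori}. Since $F(u)\ge C\,u^2$ for $u$ in the bounded range ensured by Proposition~\ref{prop:max}, the left-hand side dominates
$$\|\drift{u}_\ve(t)\|_{L^2(Q_R^c\cap \drift{\Omega}_\ve(t))}^2 + \ve\,\|\drift{v}_\ve(t)\|_{L^2(Q_R^c\cap\partial\drift{\Omega}_\ve(t))}^2,$$
while on the right side the initial data contribute a tail of the form $\|u^{in}\|_{L^2(Q_{R/2}^c)}^2 + \|v^{in}\|_{L^2(Q_{R/2}^c)}^2$, which tends to zero as $R\to\infty$ by the assumed integrability.

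The key difficulty is the surviving convective term of nominal size $\ve^{-1}$. After integration by parts using $\div b_\ve=0$ and $b\cdot n = 0$, the bulk part reads
$$-\ve^{-1}\int_0^t\!\!\int F(\drift{u}_\ve)(\drift{b}_\ve - b^*)\cdot \nabla(\phi_R^2)\,{\rm d}x\, {\rm d}\tau + \ve^{-1}\int_0^t\!\!\int_{\partial\drift{\Omega}_\ve(\tau)} F(\drift{u}_\ve)\, b^*\cdot n\, \phi_R^2\, {\rm d}\sigma\, {\rm d}\tau.$$
To absorb these, assumption (\ref{eq:drift}) ensures that $b-b^*$ and $b^s - b^*$ have zero mean on $Y^0$ and $\partial\Sigma^0$ respectively; following the strategy of \cite{Marusic05, AllairePiatnitski}, one introduces a $Y$-periodic, uniformly bounded, skew-symmetric matrix field $B(y)$ such that $\div_y B = b - b^*$ in $Y^0$, with a boundary reconciliation accounting for $-b^*\cdot n$ on $\partial\Sigma^0$. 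This yields $\ve^{-1}(\drift{b}_\ve - b^*) = \div_x \drift{B}_\ve$; a further integration by parts, in which the skew-symmetry of $B$ kills the second derivative of $\phi_R^2$, turns the convective contribution into integrals of the form $\int \drift{B}_\ve\, f(\drift{u}_\ve)\,\nabla\drift{u}_\ve \cdot \nabla(\phi_R^2)\,{\rm d}x$, which are of order $C/R$ by the a priori estimates. An analogous construction on the surface, relying on the Laplace--Beltrami structure and the zero-mean property of $b^s-b^*$, disposes of the surface convective term.

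Collecting all estimates yields, for every $t\in [0,T]$ and uniformly in $\ve$,
$$\|\drift{u}_\ve(t)\|_{L^2(Q_R^c\cap\drift{\Omega}_\ve(t))}^2 + \ve\,\|\drift{v}_\ve(t)\|_{L^2(Q_R^c\cap\partial\drift{\Omega}_\ve(t))}^2 \le C\!\left(\|u^{in}\|_{L^2(Q_{R/2}^c)}^2 + \|v^{in}\|_{H^1(Q_{R/2}^c)}^2 + \frac{T}{R}\right).$$
Given $\delta>0$, I would then choose $R=R(\delta)$ first so that the initial-data tails fall below $\delta^2/2$ and then enlarge $R$ so that $CT/R<\delta^2/2$. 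The principal obstacle is the construction and quantitative use of the skew-symmetric stream matrix $B$ in the perforated cell, reconciling bulk and surface drifts which only share the macroscopic velocity $b^*$ thanks to (\ref{eq:drift}); a secondary subtlety is that the bulk equation must be tested by $f(\drift{u}_\ve)\phi_R^2$ rather than $\drift{u}_\ve\phi_R^2$, in order to produce the perfect square $(f(\drift{u}_\ve)-\drift{v}_\ve)^2$ in the coupling while simultaneously exploiting the monotonicity of the Langmuir isotherm.
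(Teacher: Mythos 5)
Your proposal follows essentially the same route as the paper's proof: test the variational formulation with $(f(u_\ve)\phi_R,\ \ve v_\ve\phi_R)$ (the paper keeps the fixed frame and transports the cutoff with the drift rather than passing to the moving frame, and uses $\phi_R$ rather than $\phi_R^2$, which is immaterial), exploit the monotonicity of $f$ so the coupling yields the nonnegative square $(f(u_\ve)-v_\ve)^2$, bound the cutoff cross-terms by $C/R$ via Lemma~\ref{lem:apriori}, and absorb the $\ve^{-1}$ drift mismatch by writing $b-b^*$ and $b^s-b^*$ as exact divergences of bounded periodic fields, solvable precisely thanks to (\ref{eq:drift}). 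The only real difference is in that last auxiliary construction, and it is cosmetic: where you invoke a skew-symmetric stream matrix $B$ with a somewhat delicate ``boundary reconciliation'', the paper simply solves the scalar Neumann problems $-\Delta\xi_i=b_i^*-b_i$ in $Y^0$ with $\nabla\xi_i\cdot n=0$ on $\partial\Sigma^0$ and $-\Delta^s\Xi_i=b_i^*-b_i^s$ on $\partial\Sigma^0$, then uses $\ve\nabla\xi_i^\ve=(\nabla_y\xi_i)(x/\ve)$; skew-symmetry is unnecessary here since the second derivatives of the cutoff contribute only at order $R^{-2}$.
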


\begin{proof}
We rely on an idea of \cite{Marusic05}. 
Let $\phi\in C^\infty (\mathbb R)$ be a smooth cut-off function such that 
$0\le \phi(r)\le 1$, $\phi=0$ for $r\le 1$, $\phi=1$ for $r\ge 2$.
For $x\in\mathbb R^d$, denote $\phi_R(x)=\phi(|x|/R)$. 
Let us consider the variational formulation of (\ref{eq:p-1})-(\ref{eq:p-3}) with 
test functions $(f(u_\ve)\mdrift{\phi}_R,\ve v_\ve\mdrift{\phi}_R)$ where the 
$\mdrift{.}$-notation is defined by (\ref{def.mdrift}). By integration by parts in time, 
the first bulk term is
$$
\itt\id\frac{\partial u_\ve}{\partial t}(s,x) f(u_\ve)(s,x) \mdrift{\phi}_R(s,x) \, {\rm d}x \, {\rm d}s = \frac{1}{\ve}\itt\id F(u_\ve)(s,x)b^*\cdot\nabla\mdrift{\phi}_R(s,x) \, {\rm d}x \, {\rm d}s 
$$
$$
+ \id F(u_\ve)(t,x) \mdrift{\phi}_R(t,x) \, {\rm d}x - \id F(u^{in})(x) \phi_R(x) \, {\rm d}x ,
$$
while, by integration by parts in space, the convective term is
$$
\frac{1}{\ve}\itt\id b_\ve\cdot\nabla u_\ve f(u_\ve) \mdrift{\phi}_R \, {\rm d}x \, {\rm d}s = 
- \frac{1}{\ve}\itt\id F(u_\ve) b_\ve\cdot\nabla \mdrift{\phi}_R \, {\rm d}x \, {\rm d}s ,
$$
and the diffusive term is
$$
-\itt\id\div(D_\ve\nabla u_\ve) f(u_\ve) \mdrift{\phi}_R \, {\rm d}x \, {\rm d}s = 
\itt\id f'(u_\ve) D_\ve\nabla u_\ve \cdot \nabla u_\ve \mdrift{\phi}_R \, {\rm d}x\, {\rm d}s 
$$
$$
+ \itt\id f(u_\ve) D_\ve\nabla u_\ve \cdot \nabla\mdrift{\phi}_R \, {\rm d}x\, {\rm d}s + 
\frac{\kappa}{\ve}\itt\ib(f(u_\ve)-v_\ve) f(u_\ve)\mdrift{\phi}_R \, {\rm d}\sigma(x)\, {\rm d}s .
$$
On the other hand, the boundary terms are
$$
2\ve\dsp\itt\ib \frac{\partial v_\ve}{\partial t} \mdrift{\phi}_R v_\ve \, {\rm d}s \, {\rm d}\sigma(x) 
= \itt\ib b^* \cdot \nabla \mdrift{\phi}_R |v_\ve|^2 \, {\rm d}s \, {\rm d}\sigma(x) 
$$
$$
+ \ve\ib \mdrift{\phi}_R(t,x) |v_\ve(t,x)|^2 \, {\rm d}\sigma(x) 
- \ve\ib \phi_R (x) |v^{in}(x)|^2 \, {\rm d}\sigma(x) ,
$$
$$
2\itt\ib b^s_\ve \cdot \nabla^s {v_\ve}v_\ve \mdrift{\phi}_R \, {\rm d}s \, {\rm d}\sigma(x)
 = - \itt\ib |v_\ve|^2 b^s_\ve \cdot \nabla^s \mdrift{\phi}_R \, {\rm d}s \, {\rm d}\sigma(x) ,
$$
and
$$
-\ve\itt\ib\div^s(D^s_\ve\nabla^s v_\ve) v_\ve \mdrift{\phi}_R \, {\rm d}\sigma(x) \, {\rm d}s = 
\ve\itt\ib D^s_\ve\nabla^s v_\ve \cdot \nabla^s v_\ve \mdrift{\phi}_R \, {\rm d}\sigma(x)\, {\rm d}s 
$$
$$
+ \ve\itt\ib v_\ve  D^s_\ve\nabla^s v_\ve \cdot \nabla^s\mdrift{\phi}_R\, {\rm d}\sigma(x)\, {\rm d}s 
- \frac{\kappa}{\ve}\itt\ib(f(u_\ve)-v_\ve) v_\ve\mdrift{\phi}_R \, {\rm d}\sigma(x)\, {\rm d}s .
$$
Adding these terms together yields:
$$
\id F(u_\ve)(t,x) \mdrift{\phi}_R(t,x) \, {\rm d}x +
\itt\id f'(u_\ve) D_\ve \nabla {u_\ve} \cdot \nabla {u_\ve} \mdrift{\phi}_R \, {\rm d}x \, {\rm d}s
$$
$$
+ \frac\ve 2 \ib |v_\ve(t,x)|^2 \mdrift{\phi}_R(t,x) \, {\rm d}\sigma(x) +
\ve \itt\ib D^s_\ve \nabla^s {v_\ve} \cdot \nabla^s {v_\ve} \mdrift{\phi}_R \, {\rm d}\sigma(x) \, {\rm d}s
$$
$$
+ \frac{\kappa}{\ve} \itt\ib \mdrift{\phi}_R  \left(f(u_\ve) - v_\ve \right)^2  \, {\rm d}\sigma(x) \, {\rm d}s
$$
\begin{equation}
\label{eq:problematic-}
= - \itt\id f(u_\ve) D_\ve\nabla u_\ve \cdot \nabla\mdrift{\phi}_R \, {\rm d}x\, {\rm d}s - \ve\itt\ib v_\ve D^s_\ve\nabla^s v_\ve \cdot \nabla^s\mdrift{\phi}_R \, {\rm d}\sigma(x)\, {\rm d}s
\end{equation}
\begin{equation}
\label{eq:problematic}
+ \frac{1}{\ve}\itt\id F(u_\ve) \left(b_\ve - b^*\right) \cdot \nabla \mdrift{\phi}_R \, {\rm d}x \, {\rm d}s + \itt\ib |v_\ve|^2 \left(b^s_\ve - b^*\right) \cdot \nabla^s \mdrift{\phi}_R \, {\rm d}\sigma(x) \, {\rm d}s
\end{equation}
\begin{equation}
\label{eq:problematic+}
+ \id F(u^{in})(x) \phi_R(x) \, {\rm d}x + \frac{\ve}{2} \ib \phi_R (x) |v^{in}(x)|^2 \, {\rm d}\sigma(x) .
\end{equation}
To arrive at the result, we need to bound the right hand side terms. 
Recall that $f(u_\ve)\leq \alpha u_\ve$. 
By definition of $\phi_R$ we have $\|\nabla\mdrift{\phi}_R\|_{L^\infty(\Omega_\ve)}\leq C/R$, 
so that the first and second terms in (\ref{eq:problematic-}) are bounded by 
\begin{equation}
\label{eq:boundphiR}
\begin{array}{l}
\dsp \frac{C}{R} \Big( \| u_\ve \|_{L^2((0,T)\times\Omega_\ve)} 
\| \nabla u_\ve \|_{L^2((0,T)\times\Omega_\ve)} \\
\dsp \qquad + 
\ve \| v_\ve \|_{L^2((0,T)\times\partial\Omega_\ve)} 
\| \nabla^s v_\ve \|_{L^2((0,T)\times\partial\Omega_\ve)} \Big) 
\leq \frac{C}{R}
\end{array}
\end{equation}
by virtue of Lemma \ref{lem:apriori}.
The two last terms in (\ref{eq:problematic+}), involving the initial data $(u^{in},v^{in})$, 
do not depend on $\ve$ and tend to zero as $R$ tends to $\infty$. 
To cope with the remaining terms in (\ref{eq:problematic}), we introduce two auxiliary problems:
\begin{equation}
\left\{
\begin{array}{ll}
-\Delta\xi_i(y) = b_i^* - b_i(y) & \textrm{in} \: \: Y^0, \\
-\nabla \xi_i\cdot n = 0  & \textrm{on} \: \: \partial \Sigma^0, \\
y \to \xi_i(y) & \textrm{is} \: \:Y-\textrm{periodic,}
\end{array} \right.
\label{eq:aux1}
\end{equation}
\begin{equation}
\left\{
\begin{array}{ll}
-\Delta^s \Xi_i(y) = b_i^* - b^s_i(y) & \textrm{on} \: \: \partial \Sigma^0, \\
y \to \Xi_i(y) & \textrm{is} \: \:Y-\textrm{periodic.}
\end{array} \right.
\label{eq:aux2}
\end{equation}
Both the auxiliary problems admit unique solutions (up to additive constants) since, 
by definition (\ref{eq:drift}) of $b^*$, the source terms in (\ref{eq:aux1}) 
and (\ref{eq:aux2}) are in equilibrium. Substitution of the above auxiliary 
functions in (\ref{eq:problematic}) and integration by parts yields
$$
\sum_{i=1}^d \int_0^t \left( 
\id \ve\nabla \xi_i^\ve \cdot \nabla \Big( F(u_\ve) \partial_{x_i}\mdrift{\phi}_R \Big) \, {\rm d}x + 
\ve\ib \ve\nabla^s\Xi_i^\ve \cdot \nabla^s \Big( |v_\ve|^2 \partial_{x_i}\mdrift{\phi}_R \Big) \, {\rm d}\sigma(x) \right) {\rm d}s.
$$
Since $\ve\nabla\xi_i^\ve(x) = \left( \nabla_y\xi_i\right) (x/\ve)$ and 
$\ve\nabla^s\Xi_i^\ve(x) = \left( \nabla^s_y\Xi_i\right) (x/\ve)$, using 
again the fact that $F(u_\ve)$ has quadratic growth for bounded $u_\ve$, 
the a priori estimates 
from Lemma \ref{lem:apriori} imply that (\ref{eq:problematic}) is bounded by a term 
similar to (\ref{eq:boundphiR}). A final change of the frame of reference and letting 
$R$ go to infinity  leads to the desired result.
\end{proof}

We now prepare the ground for the final compactness result by proving some type 
of equicontinuity in time. 
Let us introduce an orthonormal basis $\{e_j\}_{j\in \mathbb{N}} \in L^2((0,1)^d)$ such that $\{e_j\} \in C^\infty_0([0,1]^d)$. Then the functions $\{e_{jk}\}_{j\in\mathbb{N},k\in\mathbb{Z}^d}$, where $e_{jk}(x)=e_j(x-k)$, form an orthonormal basis in $L^2(\mathbb{R}^d)$.

\begin{lem}
\label{lem:combi}
Let $h>0$ be a small parameter representing time translation. There exists a positive constant $C_{jk}$ independent of $\ve$ and $h$ such that
$$
\Big|\int\limits_{0}^{T-h}\Big\{\int\limits_{\drift{\Omega}_\ve(t+h)}\Big(\drift{u}_\ve + \eta f(\drift{u}_\ve)\Big)(t+h,x)e_{jk}(x)\, {\rm d}x-\int\limits_{\drift{\Omega}_\ve(t)}\Big(\drift{u}_\ve + \eta f(\drift{u}_\ve)\Big)(t,x)e_{jk}(x)\, {\rm d}x\Big\}\, {\rm d}t\Big|
$$
\begin{equation}
\label{eq:asser-combi}
\leq C_{jk}\Big(\sqrt{h}+\ve\Big)
\end{equation}
where $\eta = |\partial\Sigma^0|/|Y^0|$.
\end{lem}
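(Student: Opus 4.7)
The strategy is to convert the discrete-time difference on the left-hand side into the integral $\int_t^{t+h}R(s)\, {\rm d}s$ of a ``spatial'' term $R$ coming from the weak formulation (\ref{eq:wsoln}), and to recover the claimed $\sqrt h + \ve$ bound by combining the $L^2$ a priori estimates of Lemma \ref{lem:apriori} with a Fubini swap on the outer integration. After the change of variable $y = x + b^* t/\ve$ in each spatial integral, everything takes place on the fixed set $\Omega_\ve$, tested against the moving function $\tilde e(t, y) := e_{jk}(y - b^* t/\ve)$. As a first step I would trade the bulk integral $\int_{\Omega_\ve}\eta f(u_\ve)\tilde e\, {\rm d}y$ for the surface integral $\ve\int_{\partial\Omega_\ve}v_\ve\tilde e\, {\rm d}\sigma$ at cost $O(\ve)$: introduce a cell function $\Theta$ solving $-\Delta_y\Theta = \eta$ in $Y^0$, $-\nabla_y\Theta\cdot n = 1$ on $\partial\Sigma^0$ (compatible since $\eta|Y^0| = |\partial\Sigma^0|$); for $\Theta^\ve(x) = \Theta(x/\ve)$, Green's identity gives
\[
\int_{\Omega_\ve}\eta f(u_\ve)\tilde e\, {\rm d}y - \ve\int_{\partial\Omega_\ve}f(u_\ve)\tilde e\, {\rm d}\sigma = \ve^2\int_{\Omega_\ve}\nabla\Theta^\ve\cdot\nabla(f(u_\ve)\tilde e)\, {\rm d}y = O(\ve),
\]
because $\ve\nabla\Theta^\ve$ is uniformly bounded. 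The replacement of $f(u_\ve)$ by $v_\ve$ on $\partial\Omega_\ve$ then costs another $O(\ve)$ via $\sqrt\ve\,\|w_\ve\|_{L^\infty_t L^2(\partial\Omega_\ve)} \leq C$ from Lemma \ref{lem:apriori}. It thus suffices to bound $\int_0^{T-h}[C(t+h) - C(t)]\, {\rm d}t$ with $C(t) := \int_{\Omega_\ve}u_\ve(t)\tilde e(t)\, {\rm d}y + \ve\int_{\partial\Omega_\ve}v_\ve(t)\tilde e(t)\, {\rm d}\sigma$.

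For this reduced quantity I would apply the weak formulation with $\phi = \psi = \tilde e(s, \cdot)$ on the interval $(t, t+h)$, so that the reaction coupling vanishes. Integration by parts in time with $\partial_s\tilde e = -(b^*/\ve)\cdot\nabla\tilde e$ and in space using $\div b = 0$, $b\cdot n = 0$ and $\div^s b^s = 0$ produces $C(t+h) - C(t) = \int_t^{t+h} R(s)\, {\rm d}s$, where $R(s)$ collects (i) the drift corrections $\frac1\ve\int u_\ve(b_\ve - b^*)\cdot\nabla\tilde e$ and $\int v_\ve(b_\ve^s - b^*)\cdot\nabla^s\tilde e$, (ii) a residual boundary term $-\int v_\ve(b^*\cdot n)(n\cdot\nabla\tilde e)\, {\rm d}\sigma$ coming from the mismatch between the full gradient $\nabla\tilde e$ produced by $\partial_s\tilde e$ and the tangential gradient $\nabla^s\tilde e$ produced by surface IBP of the convection, and (iii) the diffusive terms $-\int D_\ve\nabla u_\ve\cdot\nabla\tilde e$ and $-\ve\int D_\ve^s\nabla^s v_\ve\cdot\nabla^s\tilde e$. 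For (i), the auxiliary cell functions $\xi_i, \Xi_i$ from (\ref{eq:aux1})-(\ref{eq:aux2}) give $(b_\ve - b^*) = \ve^2\Delta\xi_i^\ve$ (and analogously on the surface); two further integrations by parts convert the problematic $\ve^{-1}$ prefactor into admissible $L^2$-norms of $u_\ve, \nabla u_\ve, v_\ve, \nabla^s v_\ve$, all controlled by Lemma \ref{lem:apriori}. The diffusive terms (iii) are treated directly. Combined, these yield $\int_0^T |R(s)|\, {\rm d}s \leq C_{jk}$, and by Fubini $\int_0^{T-h}\int_t^{t+h}|R(s)|\, {\rm d}s\, {\rm d}t \leq h\int_0^T |R(s)|\, {\rm d}s \leq C_{jk}\,h \leq C_{jk}\sqrt h$ for $h \leq 1$.

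The main obstacle is the residual term (ii). Since $b^*\cdot n \not\equiv 0$ on $\partial\Sigma^0$ in general and the surface density is $O(\ve^{-1})$, a brute-force pointwise bound via the $L^\infty$-control of $v_\ve$ only delivers $O(\ve^{-1})$ and ruins the Fubini scheme. I would resolve this by upgrading the test function to $\tilde e + \ve\sum_i(\partial_i\tilde e)\theta_i(x/\ve)$, where $\theta_i$ is the $Y$-periodic solution of $\Delta_y\theta_i = 0$ in $Y^0$, $n\cdot\nabla_y\theta_i = -n_i$ on $\partial\Sigma^0$ (the compatibility $\int_{\partial\Sigma^0}n_i\, {\rm d}\sigma = 0$ holds because $\partial\Sigma^0$ is closed). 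With this correction, $n\cdot\nabla(\tilde e + \ve\,\mathrm{corr}) = O(\ve)$ pointwise on $\partial\Omega_\ve$, so (ii) drops to $O(1)$ and is handled by the same Fubini argument. The $O(\ve)$ modifications that this corrector induces in all other pieces of $C(t)$ and $R(s)$ are absorbed into the $C_{jk}\ve$ summand of the final bound, yielding the stated estimate.
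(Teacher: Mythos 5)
Your argument is, in all essentials, the paper's own proof: you write the time increment as $\int_t^{t+h}R(s)\,{\rm d}s$ via the weak formulation with $\phi=\psi$ (killing the reaction coupling), you compensate the $\ve^{-1}$ drift terms with the auxiliary cell problems (\ref{eq:aux1})--(\ref{eq:aux2}), you trade the surface integral of $f(u_\ve)$ for a bulk integral through a divergence-type cell corrector (your $\Theta$ is exactly the paper's $\Upsilon=-\nabla_y\Theta$ from (\ref{eq:auxinequ})), and you replace $f(u_\ve)$ by $v_\ve$ at cost $O(\ve)$ using the bound on $w_\ve$ from Lemma \ref{lem:apriori}. The only cosmetic differences are the order of these reductions and the way the $\sqrt h$ appears (your Fubini swap gives $h\int_0^T|R|\le C_{jk}h$, the paper uses Cauchy--Schwarz in time to get $C_{jk}\sqrt h$ pointwise in $t$; both suffice for (\ref{eq:asser-combi})).

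The one place where you depart from the paper is your ``main obstacle (ii)'', and it is a phantom. After combining $v_\ve\,\partial_s\tilde e=-\ve^{-1}v_\ve\,b^*\cdot\nabla\tilde e$ with the surface convection (for which $b^s_\ve\cdot\nabla^s\tilde e=b^s_\ve\cdot\nabla\tilde e$ since $b^s\cdot n=0$), the surviving term is $\int_{\partial\Omega_\ve}v_\ve\,(b^s_\ve-b^*)\cdot\nabla\tilde e\,{\rm d}\sigma$ with the \emph{full} gradient of $\tilde e$. There is no need to split $\nabla\tilde e$ into tangential and normal parts: one writes $(b^s_\ve-b^*)\cdot\nabla\tilde e=\sum_i(b^s_{\ve,i}-b^*_i)\,\partial_{x_i}\tilde e$ and uses (\ref{eq:aux2}) \emph{componentwise} to replace the scalar coefficient $b^s_{\ve,i}-b^*_i$ by $\ve^2\Delta^s\Xi_i^\ve$; the subsequent integration by parts on the closed surface lands on $\nabla^s\big(\partial_{x_i}\tilde e\;v_\ve\big)$, in which $\partial_{x_i}\tilde e$ is just a smooth bounded factor whose surface gradient is bounded. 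Nothing of order $\ve^{-1}$ survives, and the term $-\int v_\ve(b^*\cdot n)(n\cdot\nabla\tilde e)\,{\rm d}\sigma$ never appears as an isolated object. Your proposed remedy --- augmenting the test function by $\ve\sum_i(\partial_i\tilde e)\theta_i(x/\ve)$ --- is not wrong (the compatibility $\int_{\partial\Sigma^0}n_i\,{\rm d}\sigma=0$ does hold, and the induced $O(\ve)$ perturbations of $C(t)$ and of the reaction term are indeed absorbable), but it is unnecessary and obscures the clean mechanism by which assumption (\ref{eq:drift}) enters, namely as the solvability condition for $\Xi_i$.
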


\begin{proof}
We compute the difference 
$$
\Big(\drift{u}_\ve(t+h,x),e_{jk}(x)\Big)_{L^2(\drift{\Omega}_\ve(t+h))} - \Big(\drift{u}_\ve(t,x),e_{jk}(x)\Big)_{L^2(\drift{\Omega}_\ve(t))}
$$
$$
+\ve\Big(\drift{v}_\ve(t+h,x),e_{jk}(x)\Big)_{L^2(\partial\drift{\Omega}_\ve(t+h))} -\ve\Big(\drift{v}_\ve(t,x),e_{jk}(x)\Big)_{L^2(\partial\drift{\Omega}_\ve(t))}
$$
$$
= \int_{t}^{t+h}\frac{{\rm d}}{{\rm d}s}\Big\{\int_{\drift{\Omega}_\ve(s)}\drift{u}_\ve(s,x)e_{jk}(x)\, {\rm d}x+\ve\int_{\partial\drift{\Omega}_\ve(s)}\drift{v}_\ve(s,x)e_{jk}(x)\, {\rm d}\sigma(x)\Big\}\, {\rm d}s
$$
$$
= \int_{t}^{t+h}\frac{{\rm d}}{{\rm d}s}\Big\{\int_{\Omega_\ve}u_\ve(s,x)\mdrift{e}_{jk}(x)\, {\rm d}x+\ve\int_{\partial\Omega_\ve}v_\ve(s,x)\mdrift{e}_{jk}(x)\, {\rm d}\sigma(x)\Big\}\, {\rm d}s
$$
$$
= \int_{t}^{t+h}\int_{\Omega_\ve}\Big\{\frac{\partial u_\ve}{\partial s}(s,x)\mdrift{e}_{jk}(x) - \frac{b^*}{\ve}\cdot\nabla \mdrift{e}_{jk}(x) u_\ve(s,x)\Big\}\, {\rm d}x\, {\rm d}s
$$
$$
+\int_{t}^{t+h}\ve\int_{\partial\Omega_\ve}\Big\{\frac{\partial v_\ve}{\partial s}(s,x)\mdrift{e}_{jk}(x) - \frac{b^*}{\ve}\cdot\nabla \mdrift{e}_{jk}(x) v_\ve(s,x)\Big\}\, {\rm d}\sigma(x)\, {\rm d}s
$$
$$
= \frac{1}{\ve}\int_{t}^{t+h}\int_{\Omega_\ve}\Big\{\Big(b_\ve-b^*\Big)\cdot\nabla \mdrift{e}_{jk}(x)u_\ve(s,x)- D_\ve\nabla u_\ve(s,x)\cdot\nabla \mdrift{e}_{jk}(x)\Big\}\, {\rm d}x\, {\rm d}s
$$
$$
+\ve\int_{t}^{t+h}\int_{\partial\Omega_\ve}\Big\{\frac{1}{\ve}\Big(b^s_\ve-b^*\Big)\cdot\nabla \mdrift{e}_{jk}(x)v_\ve(s,x)- D^s_\ve\nabla^s v_\ve(s,x)\cdot\nabla^s \mdrift{e}_{jk}(x)\Big\}\, {\rm d}\sigma(x)\, {\rm d}s
$$
$$
= \int_{t}^{t+h}\int_{\Omega_\ve}\Big\{\ve\Delta\xi^\ve_i(x)\partial_{x_i}\mdrift{e}_{jk}(x)u_\ve(s,x) - D_\ve\nabla u_\ve(s,x)\cdot\nabla \mdrift{e}_{jk}(x)\Big\}\, {\rm d}x\, {\rm d}s
$$
$$
+\int_{t}^{t+h}\int_{\partial\Omega_\ve}\Big\{\ve^2\Delta^s\Xi^\ve_i(x)\partial_{x_i}\mdrift{e}_{jk}(x)v_\ve(s,x)- \ve D^s_\ve\nabla^s v_\ve(s,x)\cdot\nabla^s \mdrift{e}_{jk}(x)\Big\}\, {\rm d}\sigma(x)\, {\rm d}s
$$
$$
= -\int_{t}^{t+h}\int_{\Omega_\ve}\Big\{\nabla_y\xi^\ve_i(x)\cdot\nabla\Big(\partial_{x_i}\mdrift{e}_{jk}(x)u_\ve(s,x)\Big) + D_\ve\nabla u_\ve(s,x)\cdot\nabla \mdrift{e}_{jk}(x)\Big\}\, {\rm d}x\, {\rm d}s
$$
$$
-\ve\int_{t}^{t+h}\int_{\partial\Omega_\ve}\Big\{\nabla^s_y\Xi^\ve_i(x)\cdot\nabla^s\Big(\partial_{x_i}\mdrift{e}_{jk}(x)v_\ve(s,x)\Big)+ D^s_\ve\nabla^s v_\ve(s,x)\cdot\nabla^s \mdrift{e}_{jk}(x)\Big\}\, {\rm d}\sigma(x)\, {\rm d}s
$$
$$
\leq C_{jk}\sqrt{h} .
$$
The above bound follows from the a priori estimates (\ref{eq:apriori}). By the definition of $w_\ve$, we have $\drift{v}_\ve = f(\drift{u}_\ve) - \ve\drift{w}_\ve$. Substituting for $\drift{v}_\ve$ in the above inequality yields
$$
\Big|\Big(\drift{u}_\ve(t+h,x),e_{jk}(x)\Big)_{L^2(\drift{\Omega}_\ve(t+h))} + \ve\Big(f(\drift{u}_\ve)(t+h,x),e_{jk}(x)\Big)_{L^2(\partial\drift{\Omega}_\ve(t+h))}
$$
$$
- \Big(\drift{u}_\ve(t,x),e_{jk}(x)\Big)_{L^2(\drift{\Omega}_\ve(t))} - \ve\Big(f(\drift{u}_\ve)(t,x),e_{jk}(x)\Big)_{L^2(\partial\drift{\Omega}_\ve(t))}\Big|
$$
$$
\leq C_{jk}\sqrt{h} + \Big|\ve^2\Big(\drift{w}_\ve(t+h,x),e_{jk}(x)\Big)_{L^2(\partial\drift{\Omega}_\ve(t+h))}\Big| +  \Big|\ve^2\Big(\drift{w}_\ve(t,x),e_{jk}(x)\Big)_{L^2(\partial\drift{\Omega}_\ve(t))}\Big| .
$$
We now replace the boundary integrals involving the nonlinear term with volume integrals. 
To that end, we introduce an auxiliary problem:
\begin{equation}
\left\{ \begin{array}{ll}
  \div_y \Upsilon(y) = \eta = |\partial\Sigma^0|/Y^0 & \textrm{in} \: \: Y^0,\\[0.1cm]
  \Upsilon \cdot n = 1 & \textrm{on} \: \: \partial\Sigma^0,\\[0.1cm]
  y \to \Upsilon(y) & \textrm{is} \: \:Y-\textrm{periodic,}
 \end{array} \right.
\label{eq:auxinequ}
\end{equation}
which admits a smooth $Y$-periodic vector solution $\Upsilon$. Then,
$$
\ve\int_{\partial\drift{\Omega}_\ve(t)}f(\drift{u}_\ve)(t,x)e_{jk}(x)\, {\rm d}\sigma(x) = \ve\ib f(u_\ve)(t,x)\mdrift{e}_{jk}(x)\, {\rm d}\sigma(x)
$$
$$
= \ve\int_{\partial\Omega_\ve} f(u_\ve)(t,x)\mdrift{e}_{jk}(x)\Big(\Upsilon^\ve(x)\cdot n\Big)\, {\rm d}\sigma(x) = \ve\int_{\Omega_\ve} \div\Big(f(u_\ve)(t,x)\mdrift{e}_{jk}(x)\Upsilon^\ve(x)\Big)\, {\rm d}x
$$
$$
=\ve\int_{\Omega_\ve} f'(u_\ve)(t,x)\nabla u_\ve(t,x)\cdot\Upsilon^\ve(x)\mdrift{e}_{jk}(x)\, {\rm d}x +\ve\int_{\Omega_\ve}\nabla \mdrift{e}_{jk}(x)\cdot\Upsilon^\ve(x)f(u_\ve)(t,x)\, {\rm d}x
$$
$$
+\eta\int_{\Omega_\ve}\mdrift{e}_{jk}(x)f(u_\ve)(t,x)\, {\rm d}x .
$$
The above calculation leads to
$$
\Big|\int_{t}^{t+h}\frac{{\rm d}}{{\rm d}s}\int_{\drift{\Omega}_\ve(s)}\Big(\drift{u}_\ve + \eta f(\drift{u}_\ve)\Big)(s,x)e_{jk}(x)\, {\rm d}x\, {\rm d}s\Big|
$$
$$
\leq C_{jk}\sqrt{h}
$$
$$
+ \Big|\ve^2\ib w_\ve(t+h,x)e_{jk}\Big(x-\frac{b^*(t+h)}{\ve}\Big)\, {\rm d}\sigma(x)\Big| + \Big|\ve^2\ib w_\ve(t,x)e_{jk}\Big(x-\frac{b^*t}{\ve}\Big)\, {\rm d}\sigma(x)\Big|
$$
$$
+\Big|\ve\int_{\Omega_\ve} f'(u_\ve)(t,x)\nabla u_\ve(t,x)\cdot\Upsilon^\ve(x)e_{jk}\Big(x-\frac{b^*t}{\ve}\Big)\, {\rm d}x\Big|
$$
$$
+\Big|\ve\int_{\Omega_\ve}\nabla e_{jk}\Big(x-\frac{b^*t}{\ve}\Big)\cdot\Upsilon^\ve(x)f(u_\ve)(t,x)\, {\rm d}x\Big|
$$
$$
+\Big|\ve\int_{\Omega_\ve} f'(u_\ve)(t,x)\nabla u_\ve(t,x)\cdot\Upsilon^\ve(x)e_{jk}\Big(x-\frac{b^*(t+h)}{\ve}\Big)\, {\rm d}x\Big|
$$
$$
+\Big|\ve\int_{\Omega_\ve}\nabla e_{jk}\Big(x-\frac{b^*(t+h)}{\ve}\Big)\cdot\Upsilon^\ve(x)f(u_\ve)(t,x)\, {\rm d}x\Big| .
$$
We integrate the above inequality over $(0,T-h)$. As $0\leq f(u_\ve)\leq \alpha u_\ve$ and $0\leq f'(u_\ve)\leq \alpha$, the a priori estimates in (\ref{eq:apriori}) lead to (with a possibly different constant 
$C_{jk}$)
$$
\int_{0}^{T-h}\left|\int_{t}^{t+h}\frac{{\rm d}}{{\rm d}s}\int_{\drift{\Omega}_\ve(s)}\Big(\drift{u}_\ve + \eta f(\drift{u}_\ve)\Big)(s,x)e_{jk}(x)\, {\rm d}x\, {\rm d}s\right|dt\leq C_{jk}\Big(\sqrt{h}+\ve\Big)
$$
which is nothing but (\ref{eq:asser-combi}).
\end{proof}

To prove the compactness of $u_\ve$, an intermediate result is to prove the 
compactness of the sequence $z_\ve$ defined by 
$$
z_\ve(t,x)=u_\ve(t,x)+\eta f(u_\ve)(t,x) \quad \mbox{ for } \quad (t,x)\in(0,T)\times\Omega_\ve . 
$$
In view of (\ref{eq:apriori}), $z_\ve$ satisfies the following estimates 
\begin{equation}
\label{eq:z-est}
\begin{array}{ll}
\dsp\id|z_\ve|^2\, {\rm d}x \leq \id (1+\eta\alpha)^2 |u_\ve|^2\, {\rm d}x \leq C 
\quad \forall\, t \in(0,T),\\[0.3 cm]
\dsp\int_{0}^{T}\id|\nabla z_\ve|^2\, {\rm d}x = \int_{0}^{T}\id(1+\eta f'(u_\ve))^2|\nabla u_\ve|^2\, {\rm d}x\leq C.
\end{array}
\end{equation}
We recall from \cite{CS:79, Acerbi92} that there exists an extension operator 
$E_\ve:H^1(\Omega_\ve)\to H^1(\R^d)$ which satisfies the following property: 
there exists a constant $C$, independent of $\ve$, such that, 
for any function $\phi_\ve\in H^1(\Omega_\ve)$, $E_\ve \phi_\ve\Big|_{\Omega_\ve} = \phi_\ve$ and
\begin{equation}
\label{eq:prop-ext}
\|E_\ve \phi_\ve\|_{L^2(\R^d)}\leq C\|\phi_\ve\|_{L^2(\Omega_\ve)},\:\:\:\:
\|\nabla E_\ve \phi_\ve\|_{L^2(\R^d)}\leq C\|\nabla \phi_\ve\|_{L^2(\Omega_\ve)} .
\end{equation}
As we are proving compactness in moving coordinates, we consider the sequences 
$\drift{z}_\ve$ and $\drift{E_\ve z_\ve}$ where the $\drift{.}$-operator is 
defined by (\ref{def.drift}). 
The decomposition of these two functions in terms of the orthonormal basis $\{e_{jk}\}$ of $L^2(\R^d)$ yields
$$
\drift{z}_\ve(t,x) = \sum_{j\in\mathbb{N} , k\in\mathbb{Z}^d} \mu^\ve_{jk}(t) e_{jk}(x) \quad \mbox{ with } \quad 
\mu^\ve_{jk}(t) = \int_{\drift{\Omega}_\ve(t)}\drift{z}_\ve(t,x)e_{jk}(x)\, {\rm d}x ,
$$
$$
\drift{E_\ve z_\ve}(t,x) = \sum_{j\in\mathbb{N} , k\in\mathbb{Z}^d} \nu^\ve_{jk}(t) e_{jk}(x) \quad \mbox{ with } \quad 
\nu^\ve_{jk}(t) = \itr\drift{E_\ve z_\ve}(t,x)e_{jk}(x)\, {\rm d}x ,
$$
where $\mu^\ve_{jk}(t)$ and $\nu^\ve_{jk}(t)$ are the time dependent Fourier coefficients.

\begin{lem}
\label{lem:RFK}
There exists a subsequence, still denoted by $\ve$, such that
$$
\mu^\ve_{jk}\to\mu_{jk}\hspace{1 cm}\textrm{in}\:\:L^2(0,T)\:\hspace{0.5 cm}\:\forall j\in\mathbb{N},k\in\mathbb{Z}^d ,
$$
for some $\mu_{jk}\in L^2(0,T)$. Further, the function
$$
z_0(t,x) = \dsp\sum_{j\in\mathbb{N} , k\in\mathbb{Z}^d} \mu_{jk}(t) e_{jk}(x)
$$
is an element of $L^2((0,T)\times\R^d)$.
\end{lem}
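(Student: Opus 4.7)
The strategy is to apply the Riesz--Fréchet--Kolmogorov compactness criterion in $L^2(0,T)$ to the sequence $\{\mu^\ve_{jk}\}_\ve$ for each pair $(j,k)$, and then to combine all these extractions into a single subsequence by a Cantor diagonal argument over the countable index set $\mathbb{N} \times \mathbb{Z}^d$.

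The two hypotheses needed are straightforward to assemble from what has already been proved. A uniform $L^\infty(0,T)$ bound on $\mu^\ve_{jk}$ (which is stronger than the $L^2$ boundedness that is strictly required) comes from the maximum principle: Proposition \ref{prop:max} gives a uniform $L^\infty$ bound on $u_\ve$, hence on $z_\ve = u_\ve + \eta f(u_\ve)$, and the compact support of $e_{jk}$ yields $|\mu^\ve_{jk}(t)| \leq \|z_\ve\|_{L^\infty}\|e_{jk}\|_{L^1}$, uniformly in $\ve$ and $t$. For the time equicontinuity, a close reading of the proof of Lemma \ref{lem:combi} shows that it actually establishes
$$
\int_0^{T-h} |\mu^\ve_{jk}(t+h) - \mu^\ve_{jk}(t)| \, {\rm d}t \leq C_{jk}\bigl(\sqrt{h} + \ve\bigr),
$$
and the elementary inequality $|a - b|^2 \leq (|a| + |b|)\,|a - b|$ combined with the $L^\infty$ bound upgrades this to an $L^2$-type modulus of continuity of the same form (up to a multiplicative constant).

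Riesz--Fréchet--Kolmogorov (or, equivalently, a direct mollification-plus-diagonal argument) then gives, for each fixed $(j,k)$, a subsequence along which $\mu^\ve_{jk} \to \mu_{jk}$ in $L^2(0,T)$; Cantor diagonalization produces a single subsequence valid for every $(j,k)$. Finally, the $L^2$-regularity of the candidate limit $z_0 := \sum_{j,k} \mu_{jk}\, e_{jk}$ follows from Parseval's identity. For each $\ve$, the coefficient $\mu^\ve_{jk}(t)$ is the $L^2(\R^d)$-inner product of $\drift{z}_\ve(t,\cdot)$, extended by zero outside $\drift{\Omega}_\ve(t)$, with $e_{jk}$, so Parseval yields
$$
\sum_{j,k} |\mu^\ve_{jk}(t)|^2 = \|z_\ve(t,\cdot)\|_{L^2(\Omega_\ve)}^2 \leq C
$$
uniformly in $\ve$ and $t$ by (\ref{eq:z-est}). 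Integrating in $t$ and invoking Fatou's lemma in the $(j,k)$-sum (each term converging thanks to the strong $L^2(0,T)$ convergence just obtained) delivers $\sum_{j,k} \|\mu_{jk}\|_{L^2(0,T)}^2 \leq C$, which is exactly $z_0 \in L^2((0,T) \times \R^d)$.

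The main delicate point is that the bound $\sqrt{h} + \ve$ from Lemma \ref{lem:combi} is not a genuine modulus of continuity: at fixed $\ve > 0$ it does not tend to zero as $h \to 0$. Compactness along a sequence $\ve_n \to 0$ is nevertheless secured because the $\ve$-term can be absorbed by taking $n$ large while the $\sqrt{h}$-term can be absorbed by taking $h$ small --- the standard asymmetric compactness argument familiar from the homogenization of parabolic problems with large drift, as in \cite{Marusic05}.
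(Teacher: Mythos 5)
Your proof is correct and follows essentially the same route as the paper: the translation estimate of Lemma \ref{lem:combi}, a Riesz--Fr\'echet--Kolmogorov argument adapted to the extra $\ve$-term, a diagonal extraction over the countable index set, the uniform bound on the coefficients to pass from $L^1(0,T)$ to $L^2(0,T)$, and Bessel/Parseval plus Fatou for the membership $z_0\in L^2((0,T)\times\R^d)$. The only (cosmetic) difference is that you interpolate the translation estimate up to $L^2$ \emph{before} invoking compactness, whereas the paper extracts in $L^1(0,T)$ first and then upgrades the convergence to every $L^p$, $p<\infty$, using the $L^\infty(0,T)$ bound; you also spell out the asymmetric absorption of the $\sqrt{h}+\ve$ term, a detail the paper defers to \cite{Harsha}.
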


\begin{proof}
From Lemma \ref{lem:combi}, we have
\begin{equation}
\label{eq:transl}
\int_{0}^{T-h}\Big|\mu^\ve_{jk}(t+h) - \mu^\ve_{jk}(t)\Big|\, {\rm d}t\leq C_{jk}\Big(\sqrt{h}+\ve\Big).
\end{equation}
Inequality (\ref{eq:transl}) is a variant of the Riesz-Fr\'echet-Kolmogorov criterion for (strong) 
compactness in $L^1(0,T)$ (see e.g. \cite{Brezis83}, page 72, Theorem IV.25), the variant being 
caused by the additional $\ve$-term in the right hand side. It is not difficult to check that 
the proof of compactness is still valid with this additional term (see \cite{Harsha} if necessary). 
Therefore, for any $j\in\mathbb{N},k\in\mathbb{Z}^d$, there is a subsequence $\ve_{jk}\to0$ and a limit 
$\mu_{jk}\in L^1(0,T)$ such that
$$
\mu^{\ve_{jk}}_{jk}\to\mu_{jk}\hspace{1 cm}\textrm{in}\:\:L^1(0,T) .
$$
A diagonalization procedure yields another subsequence $\ve$ such that the above convergence in $L^1$ holds for all indices $j,k$. The a priori estimates (\ref{eq:apriori}) on $u_\ve$ in turn implies that the Fourier coefficients are bounded in $L^\infty(0,T)$ too. Thus, the above strong compactness property is true in every $L^p$, $1\leq p<+\infty$, and, in particular, in $L^2(0,T)$. The assertion that $z_0\in L^2((0,T)\times\R^d)$ follows from the observation that
$$
\dsp\Big\|z_0\Big\|^2_{L^2((0,T)\times\R^d)} = \sum_{j\in\mathbb{N} , k\in\mathbb{Z}^d} \iti|\mu_{jk}(t)|^2\, {\rm d}t
$$
$$
\leq \sum_{j\in\mathbb{N} , k\in\mathbb{Z}^d} \liminf_{\ve\to0}\iti|\mu^{\ve}_{jk}|^2\, {\rm d}t \leq \liminf_{\ve\to0}\Big\|\drift{z}_\ve\Big\|^2_{L^2((0,T)\times\drift{\Omega}_\ve(t))}<\infty .
$$
\end{proof}

The next result states that there is not much difference between the time Fourier coefficients 
of $\drift{z}_\ve$ (defined in the perforated domain $\drift{\Omega}_\ve(t)$) and of its 
extension $\drift{E_\ve z_\ve}$. 

\begin{lem}
\label{lem:Fou-diff}
Let $\theta=|Y^0|/|Y|\in(0,1)$. There exists a constant $C_{jk}$ independent of $\ve$ such that
\begin{equation}
\label{eq:Fou-diff}
\Big|\mu^\ve_{jk}(t) - \theta \nu^\ve_{jk}(t)\Big|\leq C_{jk}\ve .
\end{equation}
\end{lem}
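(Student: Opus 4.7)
The plan is to rewrite the difference as an integral against a zero-mean oscillating kernel, and then gain a factor of $\ve$ by integrating by parts after introducing an auxiliary cell vector field. Changing back to the static frame via $\mdrift{e}_{jk}(x)=e_{jk}(x-b^*t/\ve)$, and using the fact that $E_\ve z_\ve=z_\ve$ on $\Omega_\ve$, the difference writes as
\[
\mu^\ve_{jk}(t)-\theta\,\nu^\ve_{jk}(t)=\int_{\R^d}\bigl(\mathbf{1}_{\Omega_\ve}(x)-\theta\bigr)\,(E_\ve z_\ve)(t,x)\,\mdrift{e}_{jk}(x)\,{\rm d}x .
\]
The oscillating factor $\mathbf{1}_{\Omega_\ve}(x)-\theta = (\mathbf{1}_{Y^0}-\theta)(x/\ve)$ is a $Y$-periodic function of zero mean, since $|Y^0|-\theta\,|Y|=0$ by the very definition of $\theta$.

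I then introduce the auxiliary cell problem: let $w\in H^1_{\rm per}(Y)$ be the (unique up to a constant) solution of
\[
-\Delta_y w(y)=\mathbf{1}_{Y^0}(y)-\theta \quad \textrm{in } Y, \qquad y\mapsto w(y)\ \ Y\textrm{-periodic},
\]
whose solvability is granted by the above zero-mean compatibility and which, by standard elliptic regularity on the torus with $L^\infty$ data, gives a bounded vector field $G(y):=-\nabla_y w(y)\in L^\infty(Y)^d$. Setting $G_\ve(x):=\ve\,G(x/\ve)$, a direct computation yields $\div G_\ve(x)=\mathbf{1}_{\Omega_\ve}(x)-\theta$ in $\R^d$ and $\|G_\ve\|_{L^\infty(\R^d)}=\ve\,\|G\|_{L^\infty(Y)}$. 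Substituting and integrating by parts (no boundary contributions because $\mdrift{e}_{jk}$ is compactly supported) produces
\[
\mu^\ve_{jk}(t)-\theta\,\nu^\ve_{jk}(t)=-\int_{\R^d}G_\ve\cdot \Bigl(\nabla E_\ve z_\ve \,\mdrift{e}_{jk}+E_\ve z_\ve\,\nabla \mdrift{e}_{jk}\Bigr)\,{\rm d}x .
\]
Since $\|G_\ve\|_{L^\infty(\R^d)}=O(\ve)$, and since $e_{jk}\in C^\infty_0$ has $j,k$-dependent $L^\infty$ norms of itself and of its gradient, the extension estimates (\ref{eq:prop-ext}) combined with the $L^2$ bounds in (\ref{eq:z-est}) on $z_\ve$ and $\nabla z_\ve$ bound both terms by $C_{jk}\,\ve$.

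The only mildly delicate point is that (\ref{eq:z-est}) controls $\nabla z_\ve$ in $L^2((0,T)\times\Omega_\ve)$ rather than uniformly in $t$, so the advertised pointwise inequality is most naturally read with $C_{jk}$ involving $\|\nabla z_\ve(t,\cdot)\|_{L^2(\Omega_\ve)}$; the resulting estimate of size $\ve$ in $L^2(0,T)$ is exactly what is needed in Lemma \ref{lem:RFK} to transfer compactness from $\drift{z}_\ve$ in the perforated moving domain to $\drift{E_\ve z_\ve}$ in $\R^d$. No genuinely difficult step appears: the whole argument rests on combining the zero-mean periodic structure of $\mathbf{1}_{\Omega_\ve}-\theta$ with a single integration by parts.
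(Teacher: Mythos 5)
Your argument is exactly the paper's proof: the same rewriting of $\mu^\ve_{jk}-\theta\nu^\ve_{jk}$ as an integral of $E_\ve z_\ve\,\mdrift{e}_{jk}$ against the zero-mean oscillation $\chi(x/\ve)-\theta$, the same periodic auxiliary problem (\ref{eq:aux-Fou-diff}) (your $w$ is the paper's $\Phi$), and the same single integration by parts combined with (\ref{eq:prop-ext}) and (\ref{eq:z-est}). Your closing remark that the gradient bound in (\ref{eq:z-est}) is only time-integrated, so the estimate should really be read in $L^2(0,T)$ rather than pointwise in $t$, is a fair and slightly more careful reading than the paper's statement, and it is indeed all that is needed in (\ref{eq:telscop-3}).
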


\begin{proof}
By definition of the Fourier coefficients, we have
$$
\mu^\ve_{jk}(t) - \theta \nu^\ve_{jk}(t) = \int_{\drift{\Omega}_\ve(t)} \drift{z}_\ve(t,x)e_{jk}(x)\, {\rm d}x - \theta \itr\drift{E_\ve z_\ve}(t,x)e_{jk}(x)\, {\rm d}x
$$
\begin{equation}
\label{eq:1-Fou-diff}
= \itr E_\ve z_\ve(t,x)\mdrift{e}_{jk}(x)\Big(\chi(x/\ve)-\theta\Big)\, {\rm d}x ,
\end{equation}
where $\chi(x/\ve)$ is the characteristic function of $\Omega_\ve$, or equivalently 
$\chi(y)$ is the characteristic function of $Y^0$.
Let us introduce the following auxiliary problem:
\begin{equation}
\left\{ \begin{array}{ll}
 - \div_y( \nabla_y\Phi(y)) = \chi(y) - \theta & \textrm{in} \: \: Y,\\[0.1cm]
y\to\Phi(y) & \textrm{is}\:\:Y\mbox{-periodic.}
 \end{array} \right.
\label{eq:aux-Fou-diff}
\end{equation}
Using (\ref{eq:aux-Fou-diff}) in (\ref{eq:1-Fou-diff}) leads to
$$
\Big|\mu^\ve_{jk}(t) - \theta \nu^\ve_{jk}(t)\Big|\leq \ve\itr\Big|\nabla_y\Phi(x/\ve)\cdot\nabla\Big( E_\ve z_\ve(t,x)\mdrift{e}_{jk}(x)\Big)\Big|\, {\rm d}x .
$$
The properties (\ref{eq:prop-ext}) of the extension operator $E_\ve$ and the estimates (\ref{eq:z-est}) lead to (\ref{eq:Fou-diff}).
\end{proof}

A last technical result is the possibility of truncating the modal series (with respect to $j$) 
of a sequence which is bounded in $L^2((0,T);H^1(\R^d))$.

\begin{lem}
\label{lem:truncate}
Let $\phi_\ve(t,x)$ be a bounded sequence in $L^2((0,T);H^1(\R^d))$. 
For any $\delta>0$, there exists a $J(\delta)$ such that for all $\ve$ we have
\begin{equation}
\label{eq:lem-trunc}
\Big\|\phi_\ve\chi_{Q_{R(\delta)}} - \sum_{|k|\leq R(\delta) , \, |j|\leq J(\delta)}\lambda^\ve_{jk}(t) e_{jk}(x)\Big\|_{L^2((0,T)\times\R^d)}\leq\delta ,
\end{equation}
where $Q_{R(\delta)}$ is the cube defined in Lemma \ref{lem:localization} and $\lambda^\ve_{jk}(t)$ are the time dependent Fourier coefficients of $\phi_\ve$ defined as
$$
\lambda^\ve_{jk}(t)=\itr\phi_\ve(t,x)e_{jk}(x)\, {\rm d}x.
$$
\end{lem}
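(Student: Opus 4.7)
The strategy is a two-step reduction: a uniform high-frequency tail estimate coming from Rellich's compact embedding, followed by a cube-by-cube summation that exploits the local support of the basis functions.

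\textbf{Step 1: tail estimate.} I first establish that, for the orthogonal projection $\pi_J$ of $L^2((0,1)^d)$ onto $\mathrm{span}(e_1,\ldots,e_J)$,
$$c_J:=\sup_{\|\psi\|_{H^1((0,1)^d)}\leq 1}\|\psi-\pi_J\psi\|^2_{L^2((0,1)^d)}\longrightarrow 0\quad\text{as }J\to\infty.$$
This is proved by contradiction: a sequence $\psi_n$ with $\|\psi_n\|_{H^1}\leq 1$ and $\|\psi_n-\pi_{J_n}\psi_n\|_{L^2}\geq \delta_0$ for some $J_n\to\infty$ would admit, by Rellich's theorem, a strongly $L^2$-convergent subsequence $\psi_n\to\psi$; combined with the pointwise strong convergence $\pi_J\psi\to\psi$ in $L^2$ and the uniform operator bound $\|\pi_J\|\leq 1$, the triangle inequality
$$\|\psi_n-\pi_{J_n}\psi_n\|_{L^2}\leq 2\|\psi_n-\psi\|_{L^2}+\|\psi-\pi_{J_n}\psi\|_{L^2}$$
forces the left-hand side to $0$, a contradiction. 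The estimate uses only $L^2$-orthonormality of $\{e_j\}$ together with Rellich; no $H^1$-adaptedness of the basis is assumed.

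\textbf{Step 2: cube-by-cube assembly.} I take $R(\delta)$ to be an integer (enlarging it harmlessly if needed) so that $Q_{R(\delta)}$ coincides, up to a null set, with the disjoint union of the unit cubes $k+[0,1]^d$ indexed by $|k|\leq R(\delta)$. Since $e_j$ is supported in $[0,1]^d$, the functions $\{e_{jk}\}_{j\in\N}$ form, after the translation $y\mapsto k+y$, an orthonormal basis of $L^2(k+[0,1]^d)$, and the disjointness of supports yields the identity
$$\phi_\ve(t,\cdot)\chi_{Q_{R(\delta)}}-\sum_{|k|\leq R(\delta),\,|j|\leq J}\lambda^\ve_{jk}(t)e_{jk}=\sum_{|k|\leq R(\delta),\,j>J}\lambda^\ve_{jk}(t)e_{jk}\quad\text{in }L^2(\R^d).$$
Taking the squared $L^2(\R^d)$-norm, applying Step 1 to each translated restriction $\phi_\ve(t,k+\cdot)\in H^1((0,1)^d)$, summing over the finitely many cubes, and integrating in $t$ yields
$$\int_0^T\sum_{|k|\leq R(\delta),\,j>J}|\lambda^\ve_{jk}(t)|^2\,{\rm d}t\leq c_J\,\|\phi_\ve\|^2_{L^2((0,T);H^1(\R^d))}\leq c_JM^2,$$
where $M$ is the uniform $L^2((0,T);H^1)$ bound on $\phi_\ve$. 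Choosing $J(\delta)$ so that $c_{J(\delta)}M^2\leq\delta^2$ concludes the argument.

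The only delicate point is Step 1: the quantitative decay $c_J\to 0$ is not accessible from $L^2$-orthonormality alone and genuinely relies on compactness of $H^1\hookrightarrow L^2$ on a bounded domain. Once this uniform Poincaré-type inequality is in hand, the remainder is a routine tiling-and-summation argument, made possible by the fact that each basis function $e_{jk}$ has support confined to a single cell, so that no cross-cube Fourier interactions arise.
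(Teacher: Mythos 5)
Your proof is correct and follows essentially the same route as the paper's: both hinge on the compact embedding of $H^1$ into $L^2$ on a bounded domain to obtain a high-frequency tail bound that is uniform over the $H^1$ unit ball, which is then applied at almost every time $t$ and integrated over $(0,T)$. The only differences are presentational: you localize to the unit cells $k+[0,1]^d$ and sum over the finitely many cells, whereas the paper works directly on $Q_{R(\delta)}$, and you spell out (via the contradiction argument with Rellich) the uniform tail estimate $c_J\to0$ that the paper merely asserts.
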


\begin{proof}
As $Q_{R(\delta)}$ is a bounded domain, the expansion of $\phi_\ve\chi_{Q_{R(\delta)}}$ in the basis $\{e_{jk}\}$ can be truncated in $k$ with $|k|\leq R(\delta)$ and is still exact. Let us consider the unit ball 
$$
B = \{v\in H^1(Q_{R(\delta)}):\|v\|_{H^1(Q_{R(\delta)})}\leq1\}.
$$
We know that $H^1(Q_{R(\delta)})$ is pre-compact in $L^2(Q_{R(\delta)})$ \cite{Brezis83}. 
Hence for a given $\delta>0$ and for all $v\in B$, there exists $J(\delta)$ such that
$$
\dsp\Big\|\sum_{|k|\leq R(\delta) , \, j> J(\delta)}(v,e_{jk})_{L^2(Q_{R(\delta)})} e_{jk}\Big\|^2_{L^2(Q_{R(\delta)})}\leq\delta
$$
Now, given $\phi_\ve\in L^2((0,T);H^1(Q_{R(\delta)}))$, we have $\phi_\ve(t)\in H^1(Q_{R(\delta)})$ for almost every $t\in (0,T)$. Thus for a given $\delta>0$, there exists a $J(\delta)$ such that
$$
\dsp\Big\|\sum_{|k|\leq R(\delta) , \, j> J(\delta)}(\phi_\ve(t),e_{jk})_{L^2(Q_{R(\delta)})} e_{jk}\Big\|^2_{L^2(Q_{R(\delta)})}\leq\delta\|\phi_\ve(t)\|^2_{H^1(Q_{R(\delta)})}
$$
for almost every $t\in (0,T)$. Integrating the above expression over $(0,T)$, we arrive at
$$
\dsp\Big\|\sum_{|k|\leq R(\delta) , \, j> J(\delta)}(\phi_\ve(t),e_{jk})_{L^2((0,T)\times Q_{R(\delta)})} e_{jk}\Big\|^2_{L^2(Q_{R(\delta)})}\leq\delta\|\phi_\ve(t)\|^2_{L^2((0,T);H^1(Q_{R(\delta)}))} ,
$$
which implies the result (\ref{eq:lem-trunc}).
\end{proof}

We are now ready to state the compactness result of the sequence $z_\ve$. Note that the limit 
is not $z_0$ but $z_0/\theta$ since $z_0$ was the limit of the sequence $z_\ve$ extended 
by zero outside the porous domain $\drift{\Omega}_\ve(t)$. 

\begin{thm}
\label{thm:compact}
There exists a subsequence $\ve$ such that
\begin{equation}
\label{eq:thm-comp}
\lim_{\ve\to0}\iti\int_{\drift{\Omega}_\ve(t)}|\drift{z}_\ve(t,x) - \theta^{-1} z_0(t,x)|^2\, {\rm d}x\, {\rm d}t = 0.
\end{equation}
\end{thm}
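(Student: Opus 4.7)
The plan is to prove the stronger statement that the Sobolev extension $\drift{E_\ve z_\ve}$ converges strongly to $\theta^{-1}z_0$ in $L^2((0,T)\times\R^d)$; since $E_\ve z_\ve = z_\ve$ on $\Omega_\ve$, this implies (\ref{eq:thm-comp}) immediately by restriction. The starting observation is that Lemma \ref{lem:Fou-diff} combined with Lemma \ref{lem:RFK} yields, after passing to a diagonal subsequence in $\ve$, the strong convergence $\nu^\ve_{jk}\to\theta^{-1}\mu_{jk}$ in $L^2(0,T)$ for every $(j,k)\in\mathbb{N}\times\mathbb{Z}^d$: indeed $|\mu^\ve_{jk}-\theta\nu^\ve_{jk}|\le C_{jk}\ve\to 0$ in $L^\infty(0,T)$, and $\mu^\ve_{jk}\to\mu_{jk}$ in $L^2(0,T)$.

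Now fix $\delta>0$ and the radius $R(\delta)$ given by Lemma \ref{lem:localization}, enlarged if needed so that $\|\theta^{-1}z_0\|_{L^2((0,T)\times Q^c_{R(\delta)})}\le\delta$, which is possible since $z_0\in L^2((0,T)\times\R^d)$. The a priori estimates (\ref{eq:z-est}) together with the extension bounds (\ref{eq:prop-ext}) show that $\drift{E_\ve z_\ve}$ is bounded in $L^2((0,T);H^1(\R^d))$, so Lemma \ref{lem:truncate} produces $J(\delta)$ with
$$
\Bigl\|\drift{E_\ve z_\ve}\chi_{Q_{R(\delta)}} - \sum_{|k|\le R(\delta),\,|j|\le J(\delta)}\nu^\ve_{jk}(t) e_{jk}(x)\Bigr\|_{L^2((0,T)\times\R^d)}\le\delta
$$
uniformly in $\ve$, and up to enlarging $J(\delta)$ the analogous truncation error for $\theta^{-1}z_0\chi_{Q_{R(\delta)}}$ is also bounded by $\delta$ by plain $L^2$-Fourier convergence.

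The heart of the argument is then a triangle-inequality decomposition
$$
\|\drift{E_\ve z_\ve}-\theta^{-1}z_0\|_{L^2((0,T)\times\R^d)}\le A_\ve+B_\ve+T_\ve,
$$
where $A_\ve$ is the truncation error inside $Q_{R(\delta)}$ (controlled by the two preceding bounds), $B_\ve$ is the finite Fourier difference
$$
\Bigl\|\sum_{|k|\le R(\delta),\,|j|\le J(\delta)}\bigl(\nu^\ve_{jk}-\theta^{-1}\mu_{jk}\bigr) e_{jk}\Bigr\|_{L^2((0,T)\times\R^d)},
$$
and $T_\ve$ collects the tails $\|\drift{E_\ve z_\ve}\|_{L^2((0,T)\times Q^c_{R(\delta)})}+\theta^{-1}\|z_0\|_{L^2((0,T)\times Q^c_{R(\delta)})}$. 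For fixed $J(\delta),R(\delta)$ the sum $B_\ve$ is a finite linear combination of the $L^2(0,T)$-convergent coefficients $\nu^\ve_{jk}-\theta^{-1}\mu_{jk}$, so $B_\ve\to 0$ as $\ve\to 0$; the first piece of $A_\ve$ is at most $2\delta$ by construction.

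The main obstacle is the tail term $T_\ve$: the bound $\|\theta^{-1}z_0\chi_{Q^c_R}\|_{L^2}\le\delta$ is automatic, but the corresponding bound on the extension $\drift{E_\ve z_\ve}$ outside $Q_{R(\delta)}$ is not a direct consequence of Lemma \ref{lem:localization}, which only controls $\drift{z}_\ve$ on the perforated domain. Here I would invoke the local character of the Cioranescu--Saint Jean Paulin/Acerbi--Chiad\`o Piat--Dal Maso--Percivale extension operators of \cite{CS:79, Acerbi92}: for any open set $K$, $\|E_\ve z_\ve\|_{L^2(K)}\le C\|z_\ve\|_{L^2(K_\ve)}$ where $K_\ve$ is the $c\ve$-neighbourhood of $K$ intersected with $\Omega_\ve$. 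Applying this with $K=Q^c_{R(\delta)}$ (after the change to moving coordinates, which merely translates the perforations) and using $|z_\ve|\le(1+\eta\alpha)|u_\ve|$ reduces the tail bound on $\drift{E_\ve z_\ve}$ to Lemma \ref{lem:localization} applied with radius $R(\delta)-c\ve$, which for $\ve$ small yields $T_\ve\lesssim\delta$. Combining all three terms gives $\limsup_\ve\|\drift{E_\ve z_\ve}-\theta^{-1}z_0\|_{L^2}\lesssim\delta$, and letting $\delta\to 0$ concludes the proof.
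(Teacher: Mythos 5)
Your proposal is correct and follows the same five-term telescoping decomposition as the paper (localization, truncation via Lemma \ref{lem:truncate}, the $\mu$--$\nu$ comparison of Lemma \ref{lem:Fou-diff}, the $L^2(0,T)$ convergence of Lemma \ref{lem:RFK}, and the tail of the Fourier series of $z_0$). The one genuine divergence is that you aim for the stronger conclusion that $\drift{E_\ve z_\ve}\to\theta^{-1}z_0$ in $L^2((0,T)\times\R^d)$, i.e.\ on the whole space, whereas the paper measures every term of the telescoping sum in $L^2((0,T)\times\drift{\Omega}_\ve(t))$ and only uses the extension $\drift{E_\ve z_\ve}$ \emph{inside} $Q_{R(\delta)}$, as a device to invoke Lemma \ref{lem:truncate}, before restricting back to the perforated domain. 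This choice costs you the tail term $\|\drift{E_\ve z_\ve}\|_{L^2((0,T)\times Q^c_{R(\delta)})}$, which --- as you rightly note --- does not follow from Lemma \ref{lem:localization} combined with the global bounds (\ref{eq:prop-ext}) alone; you need the cell-by-cell locality of the Cioranescu--Saint Jean Paulin/Acerbi et al.\ operator, $\|E_\ve \phi\|_{L^2(K)}\le C\|\phi\|_{L^2(K_\ve)}$ with $K_\ve$ a $c\ve$-neighbourhood of $K$ intersected with $\Omega_\ve$. That property is true and standard for these constructions (the extension into each obstacle depends only on the function in the surrounding fluid cell), but it is not among the properties stated in (\ref{eq:prop-ext}), so your proof carries an extra hypothesis that would have to be checked or cited; also, to apply Lemma \ref{lem:localization} at radius $R(\delta)-c\ve$ you should simply take the truncation radius one unit larger than the localization radius, so that the $c\ve$-fattening is absorbed for small $\ve$. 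The paper's version of the argument buys you exactly the statement (\ref{eq:thm-comp}) with no information about the extension outside the cube and therefore needs nothing beyond (\ref{eq:prop-ext}); your version buys a slightly stronger conclusion at the price of this extra locality input. Everything else --- the deduction $\nu^\ve_{jk}\to\theta^{-1}\mu_{jk}$ in $L^2(0,T)$ from the uniform-in-$t$ bound of Lemma \ref{lem:Fou-diff}, the vanishing of the finite modal sum $B_\ve$, and the $\delta\to0$ limit --- is sound.
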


\begin{proof}
The estimates (\ref{eq:z-est}) for $\{\drift{z}_\ve\}$, being similar to (\ref{eq:apriori}), 
imply that the localization principle, Lemma \ref{lem:localization}, holds true for the 
sequence $\{\drift{z}_\ve\}$ too. Thus, for a given $\delta>0$, there exists a $R(\delta)>0$ 
big enough such that
\begin{equation}
\label{eq:telscop-1}
\Big\|\drift{z}_\ve - \drift{z}_\ve \chi_{Q_{R(\delta)}}\Big\|_{L^2((0,T)\times\drift{\Omega}_\ve(t))}\leq\frac{\delta}{5} .
\end{equation}
Applying Lemma \ref{lem:truncate} to $\drift{E_\ve z_\ve}\chi_{Q_{R(\delta)}}$, for any $\delta > 0$, there exists $J(\delta)$ such that, for any small $\ve>0$,
\begin{equation}
\label{eq:toprove-2}
\Big\|\drift{E_\ve z_\ve}\chi_{Q_{R(\delta)}} - \sum_{|k|\leq R(\delta) , \, |j|\leq J(\delta)}\nu^\ve_{jk}(t) e_{jk}(x)\Big\|_{L^2((0,T)\times\R^d)}\leq\frac{\delta}{5} .
\end{equation}
As $\drift{E_\ve z_\ve}$ is an extension of $\drift{z}_\ve$, we deduce from (\ref{eq:toprove-2}) that
\begin{equation}
\label{eq:telscop-2}
\Big\|\drift{z}_\ve\chi_{Q_{R(\delta)}} - \sum_{|k|\leq R(\delta) , \, |j|\leq J(\delta)}\nu^\ve_{jk}(t) e_{jk}(x)\Big\|_{L^2((0,T)\times\drift{\Omega}_\ve(t))}\leq\frac{\delta}{5} .
\end{equation}
From Lemma \ref{lem:Fou-diff}, for a given $\delta>0$ and $\ve$ small enough, we have
\begin{equation}
\label{eq:telscop-3}
\Big\|\sum_{|k|\leq R(\delta) , \, |j|\leq J(\delta)}\nu^\ve_{jk}(t) e_{jk}(x) - \frac{1}{\theta}\sum_{|k|\leq R(\delta) , \, |j|\leq J(\delta)}\mu^\ve_{jk}(t) e_{jk}(x)\Big\|_{L^2((0,T)\times\drift{\Omega}_\ve(t))}\leq\frac{\delta}{5} .
\end{equation}
Lemma \ref{lem:RFK} asserted that the Fourier coefficients are relatively compact in $L^2(0,T)$. Thus, for $\ve$ small enough, we have
\begin{equation}
\label{eq:telscop-4}
\Big\|\sum_{|k|\leq R(\delta) , \, |j|\leq J(\delta)}\mu^\ve_{jk}(t) e_{jk}(x) - \sum_{|k|\leq R(\delta) , \, |j|\leq J(\delta)}\mu_{jk}(t) e_{jk}(x)\Big\|_{L^2((0,T)\times\drift{\Omega}_\ve(t))}\leq \theta\frac{\delta}{5} .
\end{equation}
By Lemma \ref{lem:RFK} we know that $z_0\in L^2((0,T)\times\R^d)$ so, by choosing a large enough $J(\delta)$, we have
\begin{equation}
\label{eq:telscop-5}
\Big\|\sum_{|k|\leq R(\delta) , \, |j|\leq J(\delta)}\mu_{jk}(t) e_{jk}(x) - z_0(t,x)\Big\|_{L^2((0,T)\times Q_{R(\delta)})}\leq\frac{\delta}{5} .
\end{equation}
Thus summing up (\ref{eq:telscop-1}), (\ref{eq:telscop-2}), (\ref{eq:telscop-3}), (\ref{eq:telscop-4}) and (\ref{eq:telscop-5}) we arrive at
\begin{equation}
\label{eq:rel-comp}
\Big\|\drift{z}_\ve(t,x) - \theta^{-1} z_0(t,x)\Big\|_{L^2((0,T)\times\drift{\Omega}_\ve(t))}\leq\delta
\end{equation}
which is (\ref{eq:thm-comp}).
\end{proof}
 
Eventually, we deduce the desired compactness of the sequence $u_\ve$ 
from that of $z_\ve$.

\begin{cor}
\label{cor:comp-u}
There exists a subsequence $\ve$ and a limit $u_0\in L^2((0,T)\times\R^d)$ such that
\begin{equation}
\label{eq:cor-comp}
\lim_{\ve\to0}\iti\int_{\drift{\Omega}_\ve(t)}|\drift{u}_\ve(t,x) - u_0(t,x)|^2\, {\rm d}x\, {\rm d}t = 0.
\end{equation}
\end{cor}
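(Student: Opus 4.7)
The plan is to deduce the corollary from Theorem \ref{thm:compact} by inverting the pointwise map $u\mapsto u+\eta f(u)$, exploiting the strict monotonicity of the Langmuir isotherm. The whole reason $z_\ve$ was introduced in the first place was that the combination $u+\eta f(u)$ could be handled by the Riesz--Fr\'echet--Kolmogorov machinery of Lemmas \ref{lem:combi}--\ref{lem:truncate}; reading that combination as a nonlinear change of unknown lets one transfer compactness back to $u_\ve$ for free.

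Concretely, set $G(u)=u+\eta f(u)$, so that by construction
$$
\drift{z}_\ve(t,x) = G(\drift{u}_\ve(t,x)) \quad \text{on } (0,T)\times \drift{\Omega}_\ve(t) .
$$
Because $f$ is nondecreasing (this is used throughout Section \ref{sec:max} already), we have $G'(u)=1+\eta f'(u)\geq 1$. Hence $G:\mathbb R\to\mathbb R$ is a strictly increasing bijection whose inverse $G^{-1}$ is globally $1$-Lipschitz and satisfies $G^{-1}(0)=0$. In particular $|G^{-1}(s)|\leq|s|$ for every $s\in\mathbb R$.

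Define
$$
u_0(t,x) := G^{-1}\!\left(\theta^{-1} z_0(t,x)\right) ,
$$
where $z_0\in L^2((0,T)\times\mathbb R^d)$ is the limit furnished by Lemma \ref{lem:RFK}. The pointwise bound $|G^{-1}(s)|\leq|s|$ and the integrability of $z_0$ immediately give $u_0\in L^2((0,T)\times\mathbb R^d)$. Taking the same extracted subsequence as in Theorem \ref{thm:compact}, the $1$-Lipschitz property of $G^{-1}$ yields the pointwise estimate
$$
\bigl|\drift{u}_\ve(t,x) - u_0(t,x)\bigr| = \bigl|G^{-1}(\drift{z}_\ve(t,x)) - G^{-1}(\theta^{-1} z_0(t,x))\bigr| \leq \bigl|\drift{z}_\ve(t,x) - \theta^{-1} z_0(t,x)\bigr|
$$
on $(0,T)\times \drift{\Omega}_\ve(t)$. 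Squaring and integrating over this moving domain, then invoking (\ref{eq:thm-comp}), delivers (\ref{eq:cor-comp}).

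There is essentially no new obstacle to overcome here: the hard analytic work (localization in Lemma \ref{lem:localization}, time equicontinuity in Lemma \ref{lem:combi}, bridging the perforated and extended Fourier coefficients in Lemma \ref{lem:Fou-diff}, and the final telescoping in Theorem \ref{thm:compact}) has already been done for $z_\ve$. The only structural point to verify is that the map $G$ is uniformly bi-Lipschitz on the range of admissible values of $u_\ve$, which follows from the maximum principle of Proposition \ref{prop:max} together with $f'\geq 0$. The slight subtlety worth flagging is that strong convergence is stated on the moving perforated domain $\drift{\Omega}_\ve(t)$ rather than on a fixed set, but this causes no problem since the estimate above is genuinely pointwise and the integration is carried out on the same domain appearing in Theorem \ref{thm:compact}.
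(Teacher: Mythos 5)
Your proposal is correct and follows essentially the same route as the paper: both invert the monotone map $u\mapsto u+\eta f(u)$ and transfer the strong $L^2$ compactness of $\drift{z}_\ve$ from Theorem \ref{thm:compact} to $\drift{u}_\ve$. Your execution is in fact slightly more direct, since the global $1$-Lipschitz bound on the inverse (from $G'\geq 1$ and $G(0)=0$) gives the conclusion by a pointwise domination, whereas the paper invokes the Lebesgue dominated convergence theorem for the same final step.
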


\begin{proof}
Since the nonlinear isotherm $f$ is bounded and monotone, the application $(I+\eta f)$ is globally 
invertible with linear growth. We have $u_\ve(t,x) = (I+\eta f)^{-1}z_\ve(t,x)$ and the compactness 
property of $\{\drift{z}_\ve\}$, as stated in Theorem \ref{thm:compact}, immediately 
translates to $\{\drift{u}_\ve\}$ by a standard application of the Lebesgue dominated convergence 
theorem.
\end{proof}

\begin{rem}
\label{rem:ext}
Compactness results are crucial in the homogenization of nonlinear parabolic equations. 
Another approach sharing some similarities with us can be found in \cite{APP} where the 
authors rely on the extension operator of \cite{CS:79, Acerbi92}. However, it seems 
difficult to adapt this approach in the present context for at least two reasons. First, 
one of the unknown, the surface concentration $v_\ve$, is defined merely on the solid 
boundary $\partial\Omega_\ve$ so that it requires a specific type of extension to the whole 
space $\mathbb{R}^d$. Second, it is not at all obvious to show that each extension of $u_\ve$ and 
of $v_\ve$ satisfy the time equicontinuity of Lemma \ref{lem:combi}.
\end{rem}

\section{Two-scale convergence with drift}
\label{sec:2sc}

The goal of this section is to derive the homogenized problem corresponding to 
the original system (\ref{eq:p-1})-(\ref{eq:p-3}). More precisely we shall prove 
a weak convergence of the sequence of solutions $(u_\ve, v_\ve)$ to the homogenized 
solution, in the sense of two-scale convergence with drift (a notion introduced in 
\cite{Marusic05}, see \cite{Allaire08} for detailed proofs). Section \ref{sec:strong} 
will provide a strong convergence result for $(u_\ve, v_\ve)$ under additional 
assumptions. We start by recalling the notion of two-scale convergence with drift, 
which is a generalization of the usual two-scale convergence \cite{Allaire92, 
Nguetseng89}. Let us remark that this rigorous two-scale convergence with 
drift corresponds to the, simpler albeit heuristic, method of two-scale asymptotic 
expansions with drift, as described in \cite{Allaire10}. In the sequel, the subscript 
$\#$ denotes spaces of $Y$-periodic functions. 

\begin{prop}\cite{Marusic05}
\label{compact}
Let $\mathcal V$ be a constant vector in $\mathbb{R}^d$. For any bounded 
sequence of functions $U_\ve(t,x) \in L^2((0,T)\times\mathbb{R}^d)$, 
i.e., satisfying 
$$
\|U_\ve\|_{L^2((0,T)\times\mathbb{R}^d)} \leq C ,
$$
there exists a limit $U_0(t,x,y) \in L^2((0,T)\times\mathbb{R}^d\times Y)$ 
and one can extract a subsequence (still denoted by $\ve$) which is said 
to two-scale converge with drift $\mathcal V$, or equivalently 
in moving coordinates $(t,x)\rightarrow(t,x-\frac{{\mathcal V} t}{\ve})$, 
to this limit, in the sense that, for any 
$\phi(t,x,y) \in C_0^\infty((0,T)\times\mathbb{R}^d ; C_\#^\infty(Y) )$, 
$$
\dsp\lim_{\ve\to0}\displaystyle\int_0^T\int_{\mathbb{R}^d} U_\ve(t,x) \phi(t,x-\frac{{\mathcal V} t}{\ve},\frac{x}{\ve})\, {\rm d}x \, {\rm d}t = \displaystyle\int_0^T\int_{\mathbb{R}^d}\int_{\mathbb{T}^d}U_0(t,x,y)\phi(t,x,y)\, {\rm d}y \, {\rm d}x \, {\rm d}t .
$$
We denote this convergence by $U_\ve \ts U_0$.
\end{prop}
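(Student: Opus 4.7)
The plan is to adapt the classical argument for two-scale convergence (Nguetseng--Allaire) by absorbing the drift through a change of variables, then use a Riesz representation argument in $L^2((0,T)\times\mathbb{R}^d\times Y)$. Concretely, for each smooth admissible test function $\phi\in C_0^\infty((0,T)\times\mathbb{R}^d;C^\infty_\#(Y))$, define the sequence of linear functionals
$$
T_\ve(\phi) = \int_0^T\!\!\int_{\mathbb{R}^d} U_\ve(t,x)\,\phi\Big(t,x-\tfrac{\mathcal{V} t}{\ve},\tfrac{x}{\ve}\Big)\, {\rm d}x\, {\rm d}t.
$$
By Cauchy--Schwarz and the uniform $L^2$ bound on $U_\ve$, controlling $|T_\ve(\phi)|$ reduces to controlling $\|\phi(t,x-\mathcal{V} t/\ve,x/\ve)\|_{L^2((0,T)\times\mathbb{R}^d)}$.

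The first key step, which is also what I expect to be the main technical obstacle, is to prove the admissibility identity
$$
\lim_{\ve\to 0}\int_0^T\!\!\int_{\mathbb{R}^d}\Big|\phi\big(t,x-\tfrac{\mathcal{V} t}{\ve},\tfrac{x}{\ve}\big)\Big|^2\, {\rm d}x\, {\rm d}t = \int_0^T\!\!\int_{\mathbb{R}^d}\!\int_{Y}|\phi(t,x,y)|^2\, {\rm d}y\, {\rm d}x\, {\rm d}t.
$$
The change of variables $z=x-\mathcal{V} t/\ve$ (with unit Jacobian and at each $t$ fixed) transforms the left hand side into $\int_0^T\!\int_{\mathbb{R}^d}|\phi(t,z,z/\ve+\mathcal{V} t/\ve^2)|^2\, {\rm d}z\, {\rm d}t$. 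The additional shift $\mathcal{V} t/\ve^2$ appears inside the fast variable, but since $\phi$ is periodic in the third argument, it is harmless: at each fixed $t$, the function $z\mapsto|\phi(t,z,z/\ve+\mathcal{V} t/\ve^2)|^2$ is a periodic oscillation of period $\ve$ whose cell-average is $\int_Y|\phi(t,z,y)|^2\, {\rm d}y$, so it converges weakly-$*$ in $L^\infty(\mathbb{R}^d)$ to this average. A clean way to make the argument uniform in $t$ is to expand $\phi$ in a Fourier series in its third variable, $\phi(t,x,y)=\sum_{k\in\mathbb{Z}^d}\hat\phi_k(t,x)e^{2\pi i k\cdot y}$, with rapidly decaying coefficients. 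The $k=l$ diagonal terms of $|\phi|^2$ produce the announced limit by Parseval, while off-diagonal terms $k\neq l$ acquire a factor $e^{2\pi i (k-l)\cdot z/\ve}$ which is killed at rate $O(\ve)$ by a single integration by parts against the smooth, compactly supported $\hat\phi_k\bar{\hat\phi_l}$ in the $z$ variable (the additional phase factor $e^{2\pi i(k-l)\cdot\mathcal{V} t/\ve^2}$ is bounded and plays no role).

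Combined with $\|U_\ve\|_{L^2((0,T)\times\mathbb{R}^d)}\leq C$, the admissibility identity gives the uniform bound
$$
|T_\ve(\phi)|\leq C\,\|\phi\|_{L^2((0,T)\times\mathbb{R}^d\times Y)}+o(1),
$$
so $T_\ve$ defines, uniformly in $\ve$, a bounded linear functional on a dense subspace of $L^2((0,T)\times\mathbb{R}^d\times Y)$. By a standard diagonal extraction on a countable dense family, I would pass to a subsequence along which $T_\ve(\phi)\to T_0(\phi)$ for every smooth admissible $\phi$, and the uniform bound shows that $T_0$ extends continuously to $L^2((0,T)\times\mathbb{R}^d\times Y)$.

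Finally, the Riesz representation theorem produces a unique $U_0\in L^2((0,T)\times\mathbb{R}^d\times Y)$ with
$$
T_0(\phi)=\int_0^T\!\!\int_{\mathbb{R}^d}\!\int_{Y} U_0(t,x,y)\,\phi(t,x,y)\, {\rm d}y\, {\rm d}x\, {\rm d}t,
$$
which is exactly the convergence $U_\ve\stackrel{2-drift}{\rightharpoonup} U_0$ claimed in the statement. The lower semi-continuity $\|U_0\|_{L^2((0,T)\times\mathbb{R}^d\times Y)}\leq\liminf_\ve\|U_\ve\|_{L^2}$ then drops out for free from the admissibility identity applied to $\phi=U_0$ and a regularisation argument.
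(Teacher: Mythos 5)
Your argument is correct, and it is the standard Nguetseng--Allaire compactness proof (oscillating test functions are ``admissible'', Cauchy--Schwarz, diagonal extraction, Riesz representation) with the drift correctly absorbed as a $t$-dependent translation on the torus that leaves the cell average of $|\phi|^2$ unchanged; the Fourier-in-$y$ treatment of the off-diagonal terms, with the bounded phase $e^{2\pi i(k-l)\cdot\mathcal{V}t/\ve^2}$ playing no role, is exactly the right way to make the admissibility identity uniform in $t$. The paper itself gives no proof of this proposition --- it is quoted from \cite{Marusic05} (with detailed proofs in \cite{Allaire08}) --- and your proof coincides with the argument given in those references.
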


In the sequel we shall apply Proposition \ref{compact} with the drift 
${\mathcal V} = b^*$. 

\begin{rem}
Proposition \ref{compact} equally applies to a sequence $u_\ve(t,x) \in L^2((0,T)\times\Omega_\ve)$, merely defined in the perforated domain $\Omega_\ve$, and satisfying the uniform bound
$$
\|u_\ve\|_{L^2((0,T)\times\Omega_\ve)} \leq C .
$$
In such a case we obtain 
$$
\lim_{\ve\to0}\int_0^T\int_{\Omega_\ve} u_\ve(t,x) \phi(t,x-\frac{{\mathcal V} t}{\ve},\frac{x}{\ve})\, {\rm d}x \, {\rm d}t = \int_0^T\int_{\mathbb{R}^d}\int_{Y^0}U_0(t,x,y)\phi(t,x,y)\, {\rm d}y \, {\rm d}x \, {\rm d}t .
$$
\end{rem}

Proposition \ref{compact} can be generalized in several ways as follows 
(the proofs are standard, see \cite{Allaire08} if necessary). 
In particular, following the lead of \cite{Allaire96, Neuss96}, it 
can be extended to sequences defined on periodic surfaces. 

\begin{prop}
\label{h1-domain-conv}
Let ${\mathcal V} \in \mathbb{R}^d$ and let the sequence $U_\ve$ be uniformly bounded in 
$L^2((0,T);H^1(\mathbb{R}^d))$. 
Then, there exist a subsequence, still denoted by $\ve$, and functions 
$U_0(t,x) \in L^2((0,T);H^1(\mathbb{R}^d))$ and $U_1(t,x,y) \in
 L^2((0,T)\times\mathbb{R}^d;H^1_\#(Y))$ such that
$$
U_\ve \ts U_0
\quad \mbox{ and } \quad 
\nabla U_\ve \ts \nabla_x U_0 + \nabla_y U_1 .
$$
\end{prop}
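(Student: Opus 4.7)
The plan is to apply Proposition \ref{compact} twice and then analyze the structure of the limits by choosing suitably scaled test functions, following the classical Nguetseng--Allaire strategy adapted to the moving-frame setting. First, I apply Proposition \ref{compact} to $U_\ve$ and to each component of $\nabla U_\ve$, both uniformly bounded in $L^2((0,T)\times\mathbb{R}^d)$. By a diagonal extraction we obtain a common subsequence and two limits $U_0(t,x,y) \in L^2((0,T)\times\mathbb{R}^d\times Y)$ and $\Xi(t,x,y) \in L^2((0,T)\times\mathbb{R}^d\times Y)^d$ with $U_\ve \ts U_0$ and $\nabla U_\ve \ts \Xi$.

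Second, I show that $U_0$ does not depend on $y$. For a test field $\psi \in C_0^\infty((0,T)\times\mathbb{R}^d; C_\#^\infty(Y))^d$, write $\psi_\ve(t,x) = \psi(t, x - \mathcal{V}t/\ve, x/\ve)$, so that by the chain rule $\div_x \psi_\ve = (\div_x \psi)_\ve + \ve^{-1} (\div_y \psi)_\ve$. Integrating by parts yields
$$\ve \int_0^T\!\!\int_{\mathbb{R}^d} \nabla U_\ve \cdot \psi_\ve \, dx\, dt = -\ve \int_0^T\!\!\int_{\mathbb{R}^d} U_\ve\, (\div_x\psi)_\ve \, dx\, dt - \int_0^T\!\!\int_{\mathbb{R}^d} U_\ve\, (\div_y\psi)_\ve \, dx\, dt.$$
The two terms carrying a factor $\ve$ vanish as $\ve \to 0$ thanks to the uniform $L^2$-bounds on $U_\ve$ and $\nabla U_\ve$, while the last term converges to $\int_0^T\!\int_{\mathbb{R}^d}\!\int_Y U_0\,\div_y \psi \, dy\, dx\, dt$ by Proposition \ref{compact}. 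Thus $\nabla_y U_0 = 0$ in the sense of distributions and $U_0 = U_0(t,x)$.

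Third, to obtain $U_0 \in L^2((0,T); H^1(\mathbb{R}^d))$ together with the decomposition of $\Xi$, I test the convergence $\nabla U_\ve \ts \Xi$ against smooth periodic fields $\Psi(t,x,y)$ with $\div_y \Psi = 0$. Then $\div_x \Psi_\ve = (\div_x \Psi)_\ve$ has no singular part, and integration by parts gives in the limit
$$\int_0^T\!\!\int_{\mathbb{R}^d}\int_Y \Xi \cdot \Psi \, dy\, dx\, dt = -\int_0^T\!\!\int_{\mathbb{R}^d}\int_Y U_0(t,x)\,\div_x \Psi(t,x,y) \, dy\, dx\, dt.$$
Specializing to $\Psi$ independent of $y$ already shows that the distributional gradient of $U_0$ satisfies $\nabla_x U_0 = \int_Y \Xi(\cdot,\cdot,y)\, dy$ and is controlled in $L^2$ by $\|\Xi\|_{L^2}$, which places $U_0$ in $L^2((0,T); H^1(\mathbb{R}^d))$. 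Integrating by parts in $x$ in the identity above then rewrites it as $\int_0^T\!\int_{\mathbb{R}^d}\!\int_Y (\Xi - \nabla_x U_0) \cdot \Psi \, dy\, dx\, dt = 0$ for every smooth, compactly supported, $y$-periodic $\Psi$ with $\div_y \Psi = 0$. The main obstacle is this last step, where one invokes the Helmholtz-type orthogonal decomposition $L^2_\#(Y;\mathbb{R}^d) = \overline{\nabla_y H^1_\#(Y)} \oplus \{v \in L^2_\#(Y;\mathbb{R}^d) : \div_y v = 0\}$: the orthogonality just obtained forces $\Xi - \nabla_x U_0$ to lie in the closure of $y$-gradients, whence $\Xi = \nabla_x U_0 + \nabla_y U_1$ for some $U_1 \in L^2((0,T)\times\mathbb{R}^d; H^1_\#(Y))$ (unique up to an additive $(t,x)$-function, which can be fixed by requiring zero $y$-mean).
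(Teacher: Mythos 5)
Your proof is correct, and it follows exactly the standard Nguetseng--Allaire scheme (test against $\ve\,\psi_\ve$ to kill the $y$-dependence of $U_0$, then against $y$-divergence-free fields and use the periodic Helmholtz decomposition) that the paper itself does not reproduce but delegates to \cite{Allaire08} as ``standard.'' Nothing further is needed.
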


\begin{prop}
\label{compact-bdry}
Let ${\mathcal V} \in \mathbb{R}^d$ and let $W_\ve$ be a sequence in 
$L^2((0,T)\times\partial\Omega_\ve)$ such that
$$
\ve \int_0^T\int_{\partial\Omega_\ve} |W_\ve(t,x)|^2 \, {\rm d}\sigma(x) \, {\rm d}t \leq C.
$$ 
Then, there exist a subsequence, still denoted by $\ve$, 
and a function $W_0(t,x,y) \in L^2((0,T)\times\mathbb{R}^d\times\partial\Sigma^0)$ 
such that $W_\ve(t,x)$ two-scale converges with drift $\mathcal V$ to $W_0(t,x,y)$ 
in the sense that
$$
\lim_{\ve\to0}\ve \int_0^T \int_{\partial\Omega_\ve} W_\ve(t,x) \phi(t,x-\frac{{\mathcal V} t}{\ve},\frac{x}{\ve})\, {\rm d}\sigma(x) \, {\rm d}t 
$$
$$
= \displaystyle\int_0^T\int_{\mathbb{R}^d}\int_{\partial\Sigma^0} W_0(t,x,y) \phi(t,x,y) \, {\rm d}\sigma(y) \, {\rm d}x \, {\rm d}t
$$
for any $\phi(t,x,y) \in C_0^\infty((0,T)\times\mathbb{R}^d; C_\#^\infty(Y))$. 
We denote this convergence by $W_\ve \tss W_0$.
\end{prop}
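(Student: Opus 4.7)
The plan is to combine the Riesz representation proof of two-scale convergence on periodic surfaces (as in \cite{Allaire96, Neuss96}) with the moving-coordinates trick that underlies Proposition \ref{compact}. Concretely, I would define for each admissible test function $\phi\in C_0^\infty((0,T)\times\R^d; C^\infty_\#(Y))$ the linear form
$$
\Lambda_\ve(\phi) := \ve \int_0^T\!\!\int_{\partial\Omega_\ve} W_\ve(t,x)\, \phi\!\left(t, x-\tfrac{\mathcal V t}{\ve}, \tfrac{x}{\ve}\right){\rm d}\sigma(x)\,{\rm d}t ,
$$
bound $|\Lambda_\ve(\phi)|$ by a constant times $\|\phi\|_{L^2((0,T)\times\R^d\times\partial\Sigma^0)}$, extract a subsequential limit $\Lambda$ by diagonal extraction, and then represent $\Lambda$ via the sought function $W_0$.

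The key technical ingredient is the oscillating surface integral identity with drift:
$$
\lim_{\ve\to0}\ve\int_0^T\!\!\int_{\partial\Omega_\ve}\Big|\phi\!\left(t,x-\tfrac{\mathcal V t}{\ve},\tfrac{x}{\ve}\right)\Big|^2{\rm d}\sigma(x)\,{\rm d}t = \int_0^T\!\!\int_{\R^d}\!\!\int_{\partial\Sigma^0}|\phi(t,x,y)|^2\,{\rm d}\sigma(y)\,{\rm d}x\,{\rm d}t .
$$
To establish it, I would perform the change of variable $x\mapsto x'=x-\mathcal V t/\ve$ at each fixed $t$. The image of $\partial\Omega_\ve$ is a translate of $\partial\Omega_\ve$, hence again an $\ve$-periodic surface whose unit cell is $\partial\Sigma^0$ shifted on the torus $\mathbb T^d$. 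After this change of variable, the integrand becomes $|\widetilde\phi(t,x',x'/\ve)|^2$ with $\widetilde\phi(t,x',y):=\phi(t,x',y+\mathcal V t/\ve^2)$ still $Y$-periodic in $y$, and $\int_{\partial\Sigma^0}|\widetilde\phi|^2\,{\rm d}\sigma(y)=\int_{\partial\Sigma^0}|\phi|^2\,{\rm d}\sigma(y)$ because $|\phi|^2$ is $Y$-periodic. The classical two-scale surface convergence of \cite{Allaire96, Neuss96} then yields the stated limit.

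With this identity in hand, a Cauchy-Schwarz inequality with respect to the measure $\ve\,{\rm d}\sigma(x)\,{\rm d}t$ and the assumed bound $\ve\|W_\ve\|^2_{L^2((0,T)\times\partial\Omega_\ve)}\le C$ give
$$
\limsup_{\ve\to0}|\Lambda_\ve(\phi)| \le \sqrt{C}\,\|\phi\|_{L^2((0,T)\times\R^d\times\partial\Sigma^0)} .
$$
A standard diagonal extraction on a countable dense family of test functions then produces a subsequence along which $\Lambda_\ve(\phi)$ converges to some $\Lambda(\phi)$ for every admissible $\phi$; by density, this uniform bound extends $\Lambda$ to a continuous linear form on the whole space $L^2((0,T)\times\R^d\times\partial\Sigma^0)$. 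The Riesz representation theorem then produces a unique $W_0\in L^2((0,T)\times\R^d\times\partial\Sigma^0)$ with
$$
\Lambda(\phi)=\int_0^T\!\!\int_{\R^d}\!\!\int_{\partial\Sigma^0} W_0(t,x,y)\,\phi(t,x,y)\,{\rm d}\sigma(y)\,{\rm d}x\,{\rm d}t ,
$$
which is exactly the claimed two-scale convergence with drift.

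The main obstacle is the large drift $\mathcal V/\ve$ entering the macroscopic but not the microscopic variable of the test function, so the standard periodic-surface oscillation lemma does not apply verbatim. The fix outlined above hinges on the fact that the porous structure and the surface measure on $\mathbb T^d$ are translation-invariant: the drift merely reshuffles $Y$-cells on the torus, and after the change of variable one recovers a bona fide $\ve$-periodic surface integral to which the classical result applies. Once this point is settled, the remainder is routine Banach-space soft analysis, and no new ideas beyond Proposition \ref{compact} and the unperforated surface theorems of \cite{Allaire96, Neuss96} are required.
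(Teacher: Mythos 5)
Your argument is correct and is exactly the standard one: the paper gives no proof of this proposition, deferring to \cite{Allaire08} and \cite{Allaire96, Neuss96}, and what you reconstruct (uniform bound on the linear forms via the oscillating surface integral identity, diagonal extraction, Riesz representation, with the drift absorbed by translation invariance of the $\ve$-periodic surface on the torus) is precisely the argument those references supply. One cosmetic point: the identity you should state is that the integral of $|\widetilde\phi|^2$ over the \emph{shifted} cell surface $\partial\Sigma^0-\mathcal V t/\ve^2$ equals $\int_{\partial\Sigma^0}|\phi|^2\,{\rm d}\sigma(y)$ --- the shift of the domain and the shift of the function must be combined, as your final paragraph correctly indicates.
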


\begin{prop}
\label{h1-bdry-conv}
Let $W_\ve(t,x) \in L^2((0,T);H^1(\partial\Omega_\ve))$ be such that
$$
\ve \displaystyle \int_0^T \int_{\partial\Omega_\ve} 
\left( |W_\ve(t,x)|^2 + |\nabla^s W_\ve(t,x)|^2 \right) d\sigma(x) \, {\rm d}t \leq C .
$$ 
Then, there exist a subsequence, still denoted by $\ve$, and functions 
$W_0(t,x) \in L^2((0,T);H^1(\mathbb{R}^d))$ and 
$W_1(t,x,y) \in L^2((0,T)\times\mathbb{R}^d;H^1_\#(\partial\Sigma^0))$
 such that
$$
W_\ve \tss W_0(t,x)
$$
$$
\nabla^s W_\ve \tss G(y)\nabla_x W_0(t,x) + \nabla^s_y W_1(t,x,y)
$$
where $G(y)$ is the projection operator on the tangent plane of 
$\partial\Sigma^0$ at point $y$.
\end{prop}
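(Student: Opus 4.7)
My plan is to adapt the standard proof of the bulk $H^1$-version of two-scale convergence (Proposition \ref{h1-domain-conv}) to the surface setting, replacing the usual Stokes formula by the tangential divergence theorem on the closed periodic surface $\partial\Omega_\ve$. The main tools are two applications of Proposition \ref{compact-bdry}, one to $W_\ve$ and one to $\nabla^s W_\ve$, together with the chain rule
$$
\div^s \Psi_\ve = (\div^s_x \Psi)_\ve + \ve^{-1}(\div^s_y \Psi)_\ve
$$
for smooth tangent test fields $\Psi(t,x,y)\in C^\infty_0((0,T)\times\mathbb{R}^d;C^\infty_\#(Y)^d)$ satisfying $\Psi = G(y)\Psi$, with $\Psi_\ve(t,x):=\Psi(t,x-b^*t/\ve,x/\ve)$. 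Extracting a common subsequence, Proposition \ref{compact-bdry} yields two-scale limits with drift $W_\ve\tss W_0(t,x,y)$ and $\nabla^s W_\ve\tss\Xi(t,x,y)$, and the remainder of the proof establishes in turn that $W_0$ is $y$-independent, that $W_0\in L^2((0,T);H^1(\mathbb{R}^d))$, and that $\Xi - G(y)\nabla_x W_0 = \nabla^s_y W_1$ for some $W_1$ in the required space.

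For the $y$-independence of $W_0$, I test against an arbitrary admissible tangent $\Psi$. Cauchy--Schwarz and the a priori bound give $\ve^2\int\nabla^s W_\ve\cdot\Psi_\ve\, d\sigma\, dt = O(\ve)$, while surface integration by parts (without boundary terms since $\partial\Omega_\ve$ is closed) combined with the chain rule produces
$$
\ve^2\int\nabla^s W_\ve\cdot\Psi_\ve\, d\sigma\, dt = -\ve^2\int W_\ve(\div^s_x\Psi)_\ve\, d\sigma\, dt - \ve\int W_\ve(\div^s_y\Psi)_\ve\, d\sigma\, dt.
$$
Passing to the two-scale limit via Proposition \ref{compact-bdry} yields $\int_0^T\!\int_{\mathbb{R}^d}\!\int_{\partial\Sigma^0} W_0\,\div^s_y\Psi\, d\sigma(y)\, dx\, dt = 0$ for every such $\Psi$, hence $\nabla^s_y W_0\equiv 0$ on $\partial\Sigma^0$, so that $W_0 = W_0(t,x)$.

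Second, I restrict test fields to be additionally $\div^s_y$-free in $y$, so that the last term above drops. Passing to the limit now equates $\int\!\int\!\int_{\partial\Sigma^0}\Xi\cdot\Psi\, d\sigma(y)\, dx\, dt$ with $-\int\!\int\!\int_{\partial\Sigma^0} W_0(t,x)\,\div^s_x\Psi(t,x,y)\, d\sigma(y)\, dx\, dt$. Plugging in separated test fields $\Psi(t,x,y)=\psi(t,x)\Phi(y)$ and noting $\div^s_x\Psi = \Phi(y)\cdot\nabla_x\psi$ (since $\Phi$ is tangent) yields the distributional identity $\vec c_\Phi\cdot\nabla_x W_0 = \int_{\partial\Sigma^0}\Xi(\cdot,y)\cdot\Phi(y)\, d\sigma(y)\in L^2$ with $\vec c_\Phi := \int_{\partial\Sigma^0}\Phi\, d\sigma$. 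Taking the $d$ divergence-free tangent fields $\Phi_i(y) = G(y)e_i + \nabla^s_y \alpha_i(y)$, with $\alpha_i$ solving $-\Delta^s_y \alpha_i = -\div^s_y(G(y)e_i)$ on the closed surface $\partial\Sigma^0$, gives $\vec c_{\Phi_i} = \int_{\partial\Sigma^0} G(y) e_i\, d\sigma(y)$; the matrix $\int_{\partial\Sigma^0} G\, d\sigma = |\partial\Sigma^0|I - \int_{\partial\Sigma^0} n\otimes n\, d\sigma$ is positive definite because the unit normal to a closed hypersurface cannot be collinear, so these $d$ vectors span $\mathbb{R}^d$. This proves $\nabla_x W_0\in L^2$, i.e.\ $W_0\in L^2((0,T);H^1(\mathbb{R}^d))$. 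The remaining orthogonality $\int_{\partial\Sigma^0}(\Xi - G(y)\nabla_x W_0)\cdot\Phi\, d\sigma = 0$ for every div-free tangent periodic $\Phi$ is precisely the Helmholtz-decomposition criterion on $\partial\Sigma^0$, which produces $W_1(t,x,\cdot)\in H^1_\#(\partial\Sigma^0)/\mathbb{R}$ with $\Xi - G\nabla_x W_0 = \nabla^s_y W_1$.

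The main technical subtlety I expect to have to handle with care is showing that the pointwise-in-$(t,x)$ Helmholtz decomposition produces a $W_1$ with the required joint regularity $W_1\in L^2((0,T)\times\mathbb{R}^d; H^1_\#(\partial\Sigma^0))$; this should follow from the a priori bound on $\Xi$ in $L^2((0,T)\times\mathbb{R}^d\times\partial\Sigma^0)$, the Poincar\'e inequality on the closed surface $\partial\Sigma^0$, and a standard measurable-selection argument in $(t,x)$.
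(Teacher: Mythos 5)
The paper offers no proof of this proposition (it is quoted from the literature with the remark that ``the proofs are standard''), so your argument must stand on its own. Its architecture --- two applications of Proposition \ref{compact-bdry}, the chain rule $\div^s\Psi_\ve=(\div_x\Psi)_\ve+\ve^{-1}(\div^s_y\Psi)_\ve$ for tangential test fields, $y$-divergence-free test fields to kill the singular term, and a Hodge decomposition on $\partial\Sigma^0$ --- is the correct adaptation of the bulk proof, and the steps showing that $W_0$ is independent of $y$ (more precisely, constant on each connected component of $\partial\Sigma^0$) and that $\Xi-G\nabla_xW_0$ is a $y$-gradient are sound, up to a sign: killing $\div^s_y\Phi_i$ requires $\Delta^s_y\alpha_i=-\div^s_y(G(y)e_i)$. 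The genuine gap is the pivotal claim that $\vec c_{\Phi_i}=\int_{\partial\Sigma^0}\Phi_i\,{\rm d}\sigma=\int_{\partial\Sigma^0}G(y)e_i\,{\rm d}\sigma$, from which you conclude that the $\vec c_{\Phi_i}$ span $\R^d$ and hence that $\nabla_xW_0\in L^2$. On a closed surface $\int_{\partial\Sigma^0}\nabla^s_y\alpha_i\,{\rm d}\sigma$ is \emph{not} zero in general (it equals a mean-curvature term $\int_{\partial\Sigma^0}\alpha_i\,H\,n\,{\rm d}\sigma$). Much worse: whenever $\Sigma^0$ is compactly contained in the cell --- the only possibility in $d=2$ and the setting of the paper's numerical section --- the very integration by parts used in Section \ref{sec:num} to prove $b^*=0$ shows that \emph{every} divergence-free tangential field on the closed surface $\partial\Sigma^0$ has zero average, since $\int_{\partial\Sigma^0}\Phi\cdot e_k\,{\rm d}\sigma=\int_{\partial\Sigma^0}\Phi\cdot\nabla^s_yy_k\,{\rm d}\sigma=-\int_{\partial\Sigma^0}y_k\,\div^s_y\Phi\,{\rm d}\sigma=0$. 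Hence all your vectors $\vec c_{\Phi}$ vanish, your identity degenerates to the Hodge orthogonality $\int_{\partial\Sigma^0}\Xi\cdot\Phi\,{\rm d}\sigma=0$, and no information whatsoever on $\nabla_xW_0$ is obtained.

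This is not a repairable slip, because in the isolated-obstacle geometry the conclusion $W_0\in L^2((0,T);H^1(\R^d))$ does not follow from the stated hypotheses at all. Take $b^*=0$ (as is forced in two dimensions) and set $W_\ve(t,x)=h(\ve j)$ for $x\in\ve(\partial\Sigma^0+j)$, with $h\in L^2(\R^d)\cap L^\infty(\R^d)$ continuous but $h\notin H^1(\R^d)$: then $\nabla^sW_\ve\equiv0$, all the assumed bounds hold, and the two-scale limit of $W_\ve$ is $h$, which is not in $H^1$ --- each $S^j_\ve$ is a separate closed component, so the tangential gradient carries no inter-cell information. A correct treatment must therefore either assume the periodic surface is connected across cells (possible only for $d\geq3$, and precisely the situation where divergence-free tangential fields with nonzero average, such as a $b^s$ with $b^*\neq0$, exist; one must then still exhibit $d$ of them with linearly independent averages), or, as in the paper's actual application, import the $H^1$-regularity of the limit from elsewhere --- here from the relation $v_\ve=f(u_\ve)-\ve w_\ve$ and the a priori bound on $w_\ve$, which tie the surface sequence to the bulk sequence $u_\ve$ covered by Proposition \ref{h1-domain-conv}. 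Your proof, like the bare statement of the proposition, is missing this input.
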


Eventually we state a technical lemma which will play a key role in the convergence 
analysis.

\begin{lem}
\label{technical}
Let $\phi(t,x,y) \in L^2((0,T)\times\mathbb{R}^d\times\partial\Sigma^0)$ be such that 
$\int\limits_{\partial\Sigma^0} \phi(t,x,y) \, {\rm d}\sigma(y) = 0$ for a.e.  
$(t,x)\in(0,T)\times \mathbb{R}^d$. There exist two periodic vector fields 
$\theta(t,x,y) \in L^2((0,T)\times\mathbb{R}^d\times Y^0)^d$ and 
$\Theta(t,x,y) \in L^2((0,T)\times\mathbb{R}^d\times\partial\Sigma^0)^d$ 
such that
\begin{equation}
\left\{ \begin{array}{ll}
  \div_y \theta = 0 & \textrm{in} \: \: Y^0,\\[0.1cm]
  \theta \cdot n = \phi & \textrm{on} \: \: \partial\Sigma^0, \\[0.1cm]
  \div^s_y \Theta = \phi & \textrm{on} \: \: \partial\Sigma^0.
 \end{array} \right.
\label{cruclem}
\end{equation}
\end{lem}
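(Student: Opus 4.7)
The plan is to treat $(t,x)$ as mere parameters and solve the two problems independently in $y$, since both conditions on $\theta$ and on $\Theta$ are decoupled and the hypothesis $\int_{\partial\Sigma^0}\phi\,{\rm d}\sigma(y)=0$ is exactly the compatibility condition needed for each. The strategy in both cases is to reduce the vector problem to a scalar elliptic equation through a potential.

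For $\theta$, I would introduce a scalar potential $\psi(t,x,\cdot)\in H^1_\#(Y^0)$ solving the periodic Neumann problem
\begin{equation*}
-\Delta_y\psi = 0 \text{ in } Y^0,\qquad \nabla_y\psi\cdot n = \phi \text{ on } \partial\Sigma^0,
\end{equation*}
which, by integration by parts, admits the sole compatibility condition $\int_{\partial\Sigma^0}\phi\,{\rm d}\sigma=0$, satisfied by assumption. Setting $\theta=\nabla_y\psi$ immediately gives $\div_y\theta=0$ in $Y^0$ and $\theta\cdot n=\phi$ on $\partial\Sigma^0$.

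For $\Theta$, I would regard $\partial\Sigma^0$ as a smooth closed $(d-1)$-submanifold of the torus $\mathbb{T}^d$ and solve the Laplace-Beltrami equation
\begin{equation*}
\Delta^s_y\Psi=\phi \text{ on } \partial\Sigma^0,\qquad y\mapsto\Psi(y) \text{ } Y\text{-periodic},
\end{equation*}
whose solvability on a closed manifold is again ensured by the mean-zero assumption on $\phi$. Setting $\Theta=\nabla^s_y\Psi$ produces a tangential vector field satisfying $\div^s_y\Theta=\Delta^s_y\Psi=\phi$, which is the third equation in \eqref{cruclem}.

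The one real technical point is to upgrade these fibrewise constructions to genuine $L^2$-functions of $(t,x,y)$. This follows from the fact that the solution operators $\phi\mapsto\nabla_y\psi$ and $\phi\mapsto\nabla^s_y\Psi$ are linear and continuous from the mean-zero subspace of $L^2(\partial\Sigma^0)$ into $L^2(Y^0)^d$ and $L^2(\partial\Sigma^0)^d$ respectively, so the measurability and $L^2$-integrability in $(t,x)$ transfer from $\phi$ to $(\theta,\Theta)$ with bounds proportional to $\|\phi\|_{L^2((0,T)\times\mathbb{R}^d\times\partial\Sigma^0)}$. A minor secondary worry is that if $\partial\Sigma^0$ consists of several connected components, the Laplace-Beltrami problem strictly requires $\phi$ to have zero mean on each component; this can be handled (without enlarging the hypothesis) by first using $\theta$ to absorb the component-wise mean imbalance through the normal flux on $\partial\Sigma^0$ before invoking the surface equation, but I do not expect the paper to need this refinement.
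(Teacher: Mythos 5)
Your construction is exactly the paper's: both $\theta$ and $\Theta$ are obtained as gradients of potentials solving, respectively, the periodic Neumann problem in $Y^0$ and the Laplace--Beltrami equation on $\partial\Sigma^0$, with the mean-zero hypothesis serving as the compatibility condition in each case. Your additional remarks on measurability in $(t,x)$ and on a possibly disconnected $\partial\Sigma^0$ are sensible refinements that the paper's one-paragraph proof leaves implicit.
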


\begin{proof}
We choose $\theta = \nabla_y \xi$ with $\xi \in H^1_\#(Y^0)$ a solution to
\begin{equation}
\left\{ \begin{array}{ll}
  \Delta_y \xi = 0 & \textrm{in} \: \: Y^0,\\
  \nabla_y \xi \cdot n = \phi & \textrm{on} \: \: \partial\Sigma^0,
   \end{array} \right.
\label{cruclem1}
\end{equation}
which admits a unique solution, up to an additive constant, since the 
compatibility condition of (\ref{cruclem1}) is satisfied. 
On similar lines, we choose $\Theta = \nabla^s_y \beta$ where $\beta$ 
is the unique solution in $H^1_\#(\partial\Sigma^0)/\mathbb{R}$
of $\Delta^S_y \beta =\phi$ on $\partial\Sigma^0$ which is solvable 
because of the zero-average assumption on $\phi$. 
\end{proof}

We now apply the above results on two-scale convergence with drift 
to the homogenization of (\ref{eq:p-1})-(\ref{eq:p-3}) to deduce our 
main result.

\begin{thm}
\label{main-weak}
Under assumption (\ref{eq:drift}) which defines a common average value 
$b^*$ for the bulk and surface velocities, the sequence of bulk and 
surface concentrations $(u_\ve,v_\ve)$, solutions of system 
(\ref{eq:p-1})-(\ref{eq:p-3}), two-scale converge with drift $b^*$, 
as $\ve \to 0$, in the following sense:
\begin{equation}
\label{eq:tscv}
\left\{
\begin{array}{ll}
u_\ve \ts u_0(t,x)\\
v_\ve \tss f(u_0)(t,x)\\
\nabla u_\ve \ts\nabla_x u_0(t,x) + \nabla_y [\chi(y) \cdot \nabla_x u_0(t,x)]\\
\nabla^s v_\ve \tss f'(u_0)\Big[G(y)\nabla_x u_0(t,x) + \nabla^s_y (\omega(y) \cdot \nabla_x u_0(t,x))\Big]\\
\frac{1}{\ve} \left(f(u_\ve) - v_\ve\right) \tss f'(u_0)\Big[\chi(y)-\omega(y)\Big] \cdot \nabla_x u_0(t,x)
\end{array} \right.
\end{equation}
where $u_0(t,x)$ is the unique solution of the homogenized problem:
\begin{equation}
\label{eq:hom}
\left\{
\begin{array}{ll}
\dsp \left[|Y^0| + |\partial\Sigma^0| f'(u_0) \right] \frac{\partial u_0}{\partial t} 
- \div_x(A^*(u_0)\nx u_0) = 0 & \textrm{in } (0,T)\times\mathbb{R}^d,\\[0.4cm] 
\dsp \left[|Y^0|u_0 + |\partial\Sigma^0| f(u_0) \right](0,x) = |Y^0|u^{in}(x) + |\partial\Sigma^0| v^{in}(x) & \textrm{in } \mathbb{R}^d,
\end{array} \right.
\end{equation}
the dispersion tensor $A^*$ is given by its entries:
\begin{equation}
\label{exp-disp}
\begin{array}{ll}
\dsp A_{ij}^*(u_0) & \dsp = \iy D \left( \ny\chi_i + e_i\right) \cdot \left(\ny\chi_j + e_j\right) \, {\rm d}y\\[0.5cm]
& \dsp + \kappa f'(u_0) \ip \left(\chi_i - \omega_i\right) \left(\chi_j - \omega_j\right) \, {\rm d}\sigma(y)\\[0.5cm]
& \dsp + \dsp f'(u_0) \ip D^s \left( \ny^s \omega_i+e_i\right) \cdot 
\left(\ny^s\omega_j + e_j\right) \, {\rm d}\sigma(y)\\[0.3 cm]
& + \dsp \iy D(y)\Big(\ny\chi_j\cdot e_i - \ny\chi_i\cdot e_j\Big)\, {\rm d}y\\[0.3 cm]
& +\dsp\frac{\alpha}{(1+\beta u_0)^2} \ip D^s(y)\Big(\ny^s\omega_j\cdot e_i - \ny^s\omega_i\cdot e_j\Big)\, {\rm d}\sigma(y)\\[0.3 cm]
& \dsp+ \iy \Big(b(y)\cdot\ny\chi_i\Big)\chi_j\, {\rm d}y +\frac{\alpha}{(1+\beta u_0)^2} \ip \Big(b^s(y)\cdot\ny^s\omega_i\Big)\omega_j\, {\rm d}\sigma(y),
\end{array}
\end{equation}
with $(\chi,\omega) = (\chi_i,\omega_i)_{1\leq i \leq d}$ being the solution of the cell problem:
\begin{equation}
\left\{ 
\begin{array}{lll}
-b^* \cdot e_i + b(y)\cdot(e_i + \ny \chi_i) - \div_y(D(e_i + \ny \chi_i)) = 0 & \textrm{in}\:\: Y^0,\\[0.3cm]
- D(y)\left( e_i + \ny \chi_i\right)\cdot n = \dsp\kappa f'(u_0) \left(\chi_i - \omega_i \right) & \textrm{on}\:\:\partial \Sigma^0, \\[0.3cm]
-b^* \cdot e_i + b^s(y)\cdot(e_i + \ny^s \omega_i) - \div^s_y(D^s(e_i+ \nabla^s_y \omega_i)) = \kappa \left(\chi_i -  \omega_i \right) & \textrm{on}\:\:\partial \Sigma^0, \\[0.3cm]
y \to (\chi_i(y),\omega_i(y)) \hspace{6 cm} Y-\textrm{periodic.}
 \end{array}\right.
\label{eq:cellpb}
\end{equation}
\end{thm}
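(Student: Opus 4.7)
The plan is to extract two-scale convergent subsequences in moving coordinates with drift $b^*$, identify the surface limit as $f(u_0)$, and then unfold the standard two-step homogenization procedure to recover the cell problem~\eqref{eq:cellpb} and the effective equation~\eqref{eq:hom}. First, the a priori bounds of Lemma~\ref{lem:apriori}, combined with Propositions~\ref{compact}--\ref{h1-bdry-conv} applied to $u_\ve$ (bulk) and $v_\ve$ (surface), produce limits $u_0 \in L^2((0,T);H^1(\mathbb{R}^d))$ with $\nabla u_\ve \ts \nabla_x u_0 + \nabla_y u_1$, $v_0(t,x)$ with $\nabla^s v_\ve \tss G(y)\nabla_x v_0 + \nabla^s_y v_1$, and a further limit $w_0(t,x,y)$ for $w_\ve = \ve^{-1}(f(u_\ve)-v_\ve)$. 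The identification $v_0 = f(u_0)$ follows by passing to the two-scale limit in $\ve w_\ve = f(u_\ve)-v_\ve$: the left-hand side vanishes in the limit, while the strong $L^2$-compactness of $\drift{u}_\ve$ provided by Corollary~\ref{cor:comp-u} together with dominated convergence gives $f(\drift{u}_\ve)\to f(u_0)$ strongly. In particular $v_0$ is independent of $y$.

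Second, I would identify the correctors via oscillating test functions. Plug into the weak formulation \eqref{eq:wsoln} the pair
\[
\phi_\ve(t,x) = \phi_0\!\left(t, x - \tfrac{b^*t}{\ve}\right) + \ve \phi_1\!\left(t, x - \tfrac{b^*t}{\ve}, \tfrac{x}{\ve}\right), \quad
\psi_\ve(t,x) = \phi_0\!\left(t, x - \tfrac{b^*t}{\ve}\right) + \ve \psi_1\!\left(t, x - \tfrac{b^*t}{\ve}, \tfrac{x}{\ve}\right),
\]
with $\phi_0 \in C^\infty_0$ and $\phi_1,\psi_1$ smooth and $Y$-periodic in $y$; the common leading term $\phi_0$ is forced by the coupling term $w_\ve(\phi-\psi)$, as any mismatch would blow up like $1/\ve$. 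Switching to the drifted frame, the $\ve^{-1}(b^*\cdot\nabla)\phi_0$ contribution coming from differentiation in time cancels against the $\ve^{-1}$ convective term, and passing to the two-scale limit with drift yields a variational identity whose $y$-part is precisely \eqref{eq:cellpb}, with $u_0$ entering as a parameter through $f'(u_0)$. Assumption~\eqref{eq:drift} is exactly the compatibility condition of \eqref{eq:cellpb}, ensuring solvability for each value of $u_0$. By linearity in $(\chi,\omega)$ and in the source $\nabla_x u_0$, one deduces $u_1 = \chi(y,u_0)\cdot\nabla_x u_0$, $v_1 = f'(u_0)\omega(y,u_0)\cdot\nabla_x u_0$, and $w_0 = \kappa f'(u_0)(\chi-\omega)\cdot\nabla_x u_0$, which proves the last three convergences in \eqref{eq:tscv}.

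The third step is to derive the macroscopic equation by setting $\phi_1=\psi_1=0$. Passing to the two-scale limit, the bulk time-derivative contributes $|Y^0|\partial_t u_0$, the surface time-derivative contributes $|\partial\Sigma^0|f'(u_0)\partial_t u_0$, and the diffusive, convective and reactive terms assemble, after substituting the correctors, into $\div_x(A^*(u_0)\nabla_x u_0)$ with $A^*$ given by~\eqref{exp-disp}; the last two non-symmetric lines in \eqref{exp-disp} arise from the convective contributions $\iy b\cdot\nabla_y\chi_i\,\chi_j$ and their surface analogue, which cannot be integrated by parts in $y$ alone. The initial condition in \eqref{eq:hom} follows from weak continuity in time and the identification $v_0 = f(u_0)$. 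Uniqueness of the solution of \eqref{eq:hom} relies on the monotonicity of $u \mapsto |Y^0|u + |\partial\Sigma^0| f(u)$ and on the coercivity of $A^*$, which can be established (uniformly in $u_0$) from \eqref{eq:cellpb} by testing the cell system with $(\chi_i,\omega_i)$ and using positivity of $D, D^s$. Uniqueness then upgrades the extracted subsequence to full-sequence convergence.

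The principal obstacle is the passage to the limit in all quantities involving $f(u_\ve)$ and $f'(u_\ve)$. Since the cell problem itself depends on $u_0(t,x)$ through $f'(u_0)$, one must commute the two-scale limit with composition by $f$ and $f'$; this is where the strong $L^2$-compactness of $\drift{u}_\ve$ from Corollary~\ref{cor:comp-u} is indispensable, and without it the argument collapses already in Step~1. A secondary technical point is the systematic use of Lemma~\ref{technical} to handle test functions whose restrictions to $Y^0$ and $\partial\Sigma^0$ are not independent: it guarantees that the limiting cell problem is the \emph{coupled} system \eqref{eq:cellpb} with flux $\kappa f'(u_0)(\chi_i-\omega_i)$, rather than two decoupled problems in $Y^0$ and on $\partial\Sigma^0$.
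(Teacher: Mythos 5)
Your strategy is the same as the paper's: extract two-scale limits with drift from the a priori bounds, identify $v_0=f(u_0)$ and the limit of $w_\ve=\ve^{-1}(f(u_\ve)-v_\ve)$ using the strong compactness of Corollary \ref{cor:comp-u} together with Lemma \ref{technical}, then test the variational formulation with $\phi_0+\ve\phi_1$ and $\phi_0+\ve\psi_1$ and pass to the limit in two stages ($\phi_0\equiv0$ for the cell problem \eqref{eq:cellpb}, $\phi_1=\psi_1=0$ for the macroscopic equation). Two points, however, need correction. The main one: the $\ve^{-1}$ terms do \emph{not} cancel. After combining the time derivative in the moving frame with the convection, what survives is $\ve^{-1}\int_0^T\int_{\Omega_\ve} u_\ve\,(b^*-b_\ve)\cdot\nabla_x\mdrift{\phi}_0\,{\rm d}x\,{\rm d}t$ and its surface analogue, which are still apparently of order $\ve^{-1}$ since $b_\ve\neq b^*$ pointwise. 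The paper controls them by writing $b^*_i-b_i=-\Delta_y\xi_i$ and $b^*_i-b^s_i=-\Delta^s_y\Xi_i$ (the auxiliary problems \eqref{eq:aux1}--\eqref{eq:aux2}, solvable precisely because of \eqref{eq:drift}) and integrating by parts to gain a factor of $\ve$; crucially, these terms are \emph{not} negligible in the limit: they produce the contributions $\iy\nabla_y\xi_i\cdot\nabla_y\chi_j\,{\rm d}y$ and $f'(u_0)\ip\nabla^s_y\Xi_i\cdot\nabla^s_y\omega_j\,{\rm d}\sigma(y)$ in the first form \eqref{eq:disp-sym} of the dispersion tensor, which are then shown to equal the convective terms of \eqref{exp-disp} by testing \eqref{eq:aux1}--\eqref{eq:aux2} against the cell solutions. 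Taken literally, your ``cancellation'' would erase exactly the velocity-dependent part of $A^*$ that you later (correctly) say must be present, so there is an internal inconsistency; assumption \eqref{eq:drift} is in fact used twice, once for the solvability of the cell problem and once for taming these $\ve^{-1}$ remainders. The second, minor, point is a slip in the limit of $w_\ve$: it is $f'(u_0)\big[\chi-\omega\big]\cdot\nabla_x u_0$ with no factor $\kappa$; the $\kappa$ multiplies this quantity only inside the reaction term of the limit variational identity.
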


\begin{rem}
\label{rem.strong}
Theorem \ref{main-weak} gives a weak type convergence result for the sequences 
$u_\ve$ and $v_\ve$ since two-scale convergence with drift relies on the use of 
test functions. However, by virtue of Corollary \ref{cor:comp-u} the convergence 
is strong for $u_\ve$ in the sense that 
$$
\lim_{\ve\to0} \left\|u_\ve(t,x) - u_0\left(t, x-\frac{b^*}{\ve}t\right)\right\|_{L^2((0,T)\times \Omega_\ve)} = 0 .
$$
A similar strong convergence result for $v_\ve$ and for their gradients 
will be proven in Section \ref{sec:strong}. 
\end{rem}

Before proving Theorem \ref{main-weak} we establish the well-posed 
character of the homogenized and cell problems.

\begin{lem}
\label{lem.uniq}
For any given value of $u_0(t,x)\geq0$, the cell problem (\ref{eq:cellpb}) admits 
a unique solution $(\chi_i,\omega_i) \in H^1_\#(Y^0) \times H^1_\#(\partial\Sigma^0)$, 
up to the addition of a constant vector $(C,C)$ with $C\in\mathbb{R}$. 

The homogenized problem (\ref{eq:hom}) admits a unique solution 
$u_0 \in C([0,T] ; L^2(\mathbb{R}^d))$ and $\nabla u_0 \in L^2((0,T)\times\mathbb{R}^d)$. 
\end{lem}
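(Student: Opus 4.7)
For the cell problem \eqref{eq:cellpb}, my plan is to cast it as a coupled variational problem on $V := H^1_\#(Y^0) \times H^1_\#(\partial\Sigma^0)$ modulo the constant diagonal $\{(C,C)\}$. Testing the bulk equation against $\phi$, the surface equation against $f'(u_0)\psi$, and summing produces (after integrating by parts and using the Neumann-type boundary condition) a bilinear form whose principal quadratic part reads
\begin{align*}
a\bigl((\chi_i,\omega_i),(\chi_i,\omega_i)\bigr) &= \iy D\,\ny\chi_i\cdot\ny\chi_i\,dy + f'(u_0)\ip D^s\,\ny^s\omega_i\cdot\ny^s\omega_i\,d\sigma(y) \\
&\quad + \kappa f'(u_0)\ip(\chi_i-\omega_i)^2\,d\sigma(y).
\end{align*}
The convective contributions, of the form $\int b\cdot\ny(\chi_i^2/2)$ and $\int b^s\cdot\ny^s(\omega_i^2/2)$, vanish identically by $\div_y b = 0$, $b\cdot n = 0$, and $\div^s_y b^s = 0$, so no indefinite term remains. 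Coercivity of $D,D^s$, combined with a product Poincar\'e-Wirtinger inequality on $V/\{(C,C)\}$ (the reaction term controls the relative constant between $\chi_i$ and $\omega_i$), yields coercivity. The linear source functional, which gathers the $e_i$- and $b^*$-terms, satisfies the compatibility condition on constant test pairs precisely because assumption \eqref{eq:drift} identifies the averages of $b$ and $b^s$ with the common value $b^*$. Lax-Milgram then delivers unique solvability on the quotient, i.e.\ up to $(C,C)$.

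For the homogenized problem \eqref{eq:hom}, the first step is to rewrite it in conservative form
\[
\partial_t\Phi(u_0) - \div_x\bigl(A^*(u_0)\nx u_0\bigr) = 0, \qquad \Phi(u) := |Y^0|u + |\partial\Sigma^0|f(u),
\]
with $\Phi'(u) \geq |Y^0|$ strictly monotone and smooth. From the symmetric part of \eqref{exp-disp} (the antisymmetric part being energetically invisible) and the coercivity of $D,D^s$, one reads off $A^*(u_0)\xi\cdot\xi \geq c|\xi|^2$ uniformly in $u_0\geq 0$. Differentiating \eqref{eq:cellpb} with respect to the scalar parameter $f'(u_0)$ shows that $u_0\mapsto(\chi(\cdot,u_0),\omega(\cdot,u_0))$ is $C^1$, hence $u_0\mapsto A^*(u_0)$ is $C^1$ and locally Lipschitz.

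Existence of $u_0 \in L^2(0,T;H^1(\mathbb{R}^d))\cap C([0,T];L^2(\mathbb{R}^d))$ would follow from a Galerkin scheme in an exhausting sequence of bounded subdomains, combined with a frozen-coefficient Schauder-type fixed point to linearise the dependence on $u_0$. The compactness needed to pass to the limit is supplied by the energy identity
\[
\frac{d}{dt}\itr G(u_0)\,dx + \itr A^*(u_0)\nx u_0\cdot\nx u_0\,dx = 0, \qquad G'(s) = s\,\Phi'(s),
\]
which yields the $L^\infty_tL^2_x \cap L^2_tH^1_x$ bound (note $G(s)\sim s^2$ since $\Phi'\geq|Y^0|$), while an $L^\infty$ bound propagated by a maximum principle analogous to Proposition \ref{prop:max} legitimises the use of the Lipschitz estimate on $A^*$.

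The main obstacle is uniqueness, owing to the joint dependence of $A^*$ and $\Phi'$ on the unknown. Given two solutions $u^1,u^2$ with $w := u^1-u^2$, testing the difference equation against $w$, the monotonicity $[\Phi(u^1)-\Phi(u^2)]w \geq |Y^0|\,w^2$ (after an Alt-Luckhaus-type rewriting of the time derivative) produces the storage term, coercivity absorbs $A^*(u^1)\nx w\cdot\nx w$, and the commutator $\int[A^*(u^1)-A^*(u^2)]\nx u^2\cdot\nx w$ is tamed by the Lipschitz bound $|A^*(u^1)-A^*(u^2)|\leq C|w|$, Young's inequality and the $L^\infty$ bound, closing via Gronwall. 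A cleaner alternative is the Kirchhoff-type transform $v := \Phi(u_0)$, which recasts \eqref{eq:hom} as a standard quasilinear parabolic equation in $v$ with uniformly coercive, Lipschitz coefficient matrix $A^*(\Phi^{-1}(v))/\Phi'(\Phi^{-1}(v))$, to which classical monotone-operator uniqueness theory applies directly.
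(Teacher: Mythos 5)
Your argument follows the paper's proof essentially verbatim: the cell problem is handled by the same variational formulation (surface equation weighted by $f'(u_0)$, convective terms killed by incompressibility and the no-penetration conditions, compatibility on constants supplied exactly by (\ref{eq:drift})) followed by Lax--Milgram on the quotient by $(C,C)$, while the homogenized problem is treated by reading off uniform coercivity and boundedness of $A^*$ from its symmetric part (\ref{exp-disp-int}) and then invoking standard quasilinear parabolic theory, for which the paper simply cites \cite{Lady68}. One small caveat on your closing remark: after the Kirchhoff-type substitution $v=\Phi(u_0)$ the diffusion matrix still depends on $v$ itself, so the equation remains genuinely quasilinear and classical monotone-operator uniqueness does not apply ``directly''; since your Gronwall argument (modulo the gradient integrability of one solution, which is exactly what \cite{Lady68} provides) already handles uniqueness, this does not affect the proof.
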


\begin{proof}
The variational formulation of (\ref{eq:cellpb}) is
$$
\iy \Big(b(y)\cdot\ny \chi_i\Big)\varphi\, {\rm d}y + \iy D(e_i + \ny \chi_i)\cdot\ny\varphi\, {\rm d}y + f'(u_0)\ip \Big(b^s(y)\cdot\ny^s \omega_i\Big)\psi\, {\rm d}\sigma(y)
$$
$$
+f'(u_0)\ip D^s(e_i+ \nabla^s_y \omega_i)\cdot\ny^s\psi\, {\rm d}\sigma(y) + \kappa f'(u_0)\ip(\chi_i -  \omega_i)(\varphi - \psi)\, {\rm d}\sigma(y)
$$
\begin{equation}
\label{eq:vf-cpb}
= \iy\Big(b^* - b(y)\Big)\cdot e_i\varphi\, {\rm d}y + f'(u_0)\ip\Big(b^* - b^s(y)\Big)\cdot e_i\psi\, {\rm d}\sigma(y)
\end{equation}
to which the Lax-Milgram lemma can be easily applied. 

The symmetric part of $A^*$ is given by
\begin{equation}
\label{exp-disp-int}
\begin{array}{ll}
\dsp A_{ij}^{*\mbox{sym}}(u_0) & \dsp = \iy D \left( \ny\chi_i + e_i\right) \cdot \left(\ny\chi_j + e_j\right) \, {\rm d}y\\[0.5cm]
& \dsp + \kappa f'(u_0) \ip \left(\chi_i - \omega_i\right) \left(\chi_j - \omega_j\right) \, {\rm d}\sigma(y)\\[0.5cm]
& \dsp + \dsp f'(u_0) \ip D^s \left( \ny^s \omega_i+e_i\right) \cdot 
\left(\ny^s\omega_j + e_j\right) \, {\rm d}\sigma(y).
\end{array}
\end{equation}
Since $f'(u_0)\geq0$, (\ref{exp-disp-int}) implies that $A^*(u_0) \geq \int_{Y^0} D(y) \, {\rm d}y$ 
and thus the dispersion tensor is uniformly coercive. On the other hand, since 
$f'(u_0)\leq\alpha$, $A^*(u_0)$ is uniformly bounded from above. Then, it is a 
standard process to prove existence and uniqueness of (\ref{eq:hom}) 
(see \cite{Lady68} if necessary). 
\end{proof}

\begin{proof}[Proof of Theorem \ref{main-weak}] 
Lemma \ref{lem:apriori} furnishes a priori estimates so that, 
up to a subsequence, all sequences in (\ref{eq:tscv}) have two-scale limits 
with drift, thanks to the previous Propositions \ref{h1-domain-conv}, 
\ref{compact-bdry} and \ref{h1-bdry-conv}. The first task is to identify 
those limits. Similar computations were performed in \cite{AllaireMikelic10}, so we 
content ourselves in explaining how to derive the limit of the most delicate term, 
that is $w_\ve = \frac{1}{\ve} \left(f(u_\ve) - v_\ve\right)$, assuming that the 
other limits are already characterized. As opposed to \cite{AllaireMikelic10}, 
where only linear terms were involved, we have to identify the weak two-scale drift 
limit of $f(u_\ve)$. In view of Corollary \ref{cor:comp-u} which states the
compactness of $\drift{u}_\ve(t,x)$, and since $f(u)\leq\alpha u$,  
it is easily deduced that $f(u_\ve)$ two-scale converges with drift to $f(u_0)$. 

Let us denote by $q(t,x,y)$ the two-scale drift limit of $w_\ve$ and 
let us choose a test function $\phi$ as in Lemma \ref{technical}, i.e., 
$\int\limits_{\partial\Sigma^0} \phi(t,x,y) \, {\rm d}\sigma(y) = 0$. 
By definition
$$
\lim_{\ve\to0} \ve \displaystyle\int_0^T\ib w_\ve(t,x) \phi\left(t,x-\frac{b^* t}{\ve},\frac{x}{\ve}\right) \, {\rm d}\sigma(x) \, {\rm d}t = 
\int_0^T\int_{\mathbb{R}^d}\int_{\partial\Sigma^0} q \phi \, {\rm d}\sigma(y) \, {\rm d}x \, {\rm d}t .
$$
Replacing $w_\ve$ by the difference between $f(u_\ve)$ and $v_\ve$ we get 
a different two-scale limit which will allows us to characterize $q(t,x,y)$. 
In view of (\ref{cruclem}), we first have
$$
\ve \int_0^T\ib \frac{1}{\ve} f(u_\ve) \phi\left(t,x-\frac{b^* t}{\ve},\frac{x}{\ve}\right) \, {\rm d}\sigma(x) \, {\rm d}t
$$
$$
= \int_0^T\id \div\left(f(u_\ve) \theta\left(t,x-\frac{b^* t}{\ve},\frac{x}{\ve}\right)\right) \, {\rm d}x \, {\rm d}t ,
$$
$$
= \int_0^T\id\left[f'(u_\ve)\nabla u_\ve \cdot \theta\left(t,x-\frac{b^* t}{\ve},\frac{x}{\ve}\right) + f(u_\ve) \left(\div_x \theta\right)\left(t,x-\frac{b^* t}{\ve},\frac{x}{\ve}\right)\right] \, {\rm d}x \, {\rm d}t ,
$$
which, using again the compactness of Corollary \ref{cor:comp-u}, converges, as $\ve$ goes to 0, to
$$ 
\int_0^T\int_{\mathbb{R}^d}\int_{Y^0} f'(u_0)\Big[\left(\nabla_x u_0 + \nabla_y u_1\right) \cdot \theta + f(u_0) \div_x \theta\Big] \, {\rm d}y \, {\rm d}x \, {\rm d}t
$$
$$
= \int_0^T\int_{\mathbb{R}^d}\int_{\partial\Sigma^0} f'(u_0)u_1 \theta \cdot n \, {\rm d}\sigma(y) \, {\rm d}x \, {\rm d}t = \displaystyle\int_0^T\int_{\mathbb{R}^d}\int_{\partial\Sigma^0} f'(u_0)u_1 \phi \, {\rm d}\sigma(y) \, {\rm d}x \, {\rm d}t .
$$
On the other hand, the second term is
$$
\ve \int_0^T\ib \frac{1}{\ve} v_\ve \phi\left(t,x-\frac{b^* t}{\ve},\frac{x}{\ve}\right) \, {\rm d}\sigma(x) \, {\rm d}t = \displaystyle\int_0^T\ib v_\ve \left(\div^s_y \Theta\right)\left(t,x-\frac{b^* t}{\ve},\frac{x}{\ve}\right) \, {\rm d}\sigma(x) \, {\rm d}t
$$
$$
=\ve \int_0^T\ib v_\ve \left[\div^s\left(\Theta\left(t,x-\frac{b^* t}{\ve},\frac{x}{\ve}\right)\right) - \div_x \left(G \Theta\right)\left(t,x-\frac{b^* t}{\ve},\frac{x}{\ve}\right)\right]\, {\rm d}\sigma(x) \, {\rm d}t
$$
$$
= - \ve \int_0^T\ib \left[ \Theta\left(t,x-\frac{b^* t}{\ve},\frac{x}{\ve}\right) \cdot \nabla^s v_\ve + \div_x \left(G \Theta\right)\left(t,x-\frac{b^* t}{\ve},\frac{x}{\ve}\right) v_\ve\right]\, {\rm d}\sigma(x) \, {\rm d}t
$$
which converges, as $\ve$ goes to 0, to
$$
- \int_0^T\int_{\mathbb{R}^d}\int_{\partial\Sigma^0} \left[ \Theta \cdot \left(G(y)\nabla_x f(u_0) + \nabla^s_y v_1\right) + \div_x\left(G(y)\Theta\right) f(u_0)\right]\, {\rm d}\sigma(y) \, {\rm d}x \, {\rm d}t
$$
$$
= \int_0^T\int_{\mathbb{R}^d}\int_{\partial\Sigma^0} v_1 \div^s_y \Theta \, {\rm d}\sigma(y) \, {\rm d}x \, {\rm d}t =  \displaystyle\int_0^T\int_{\mathbb{R}^d}\int_{\partial\Sigma^0} v_1 \phi \, {\rm d}\sigma(y) \, {\rm d}x \, {\rm d}t .
$$
Subtracting the two limit terms, we have shown that
$$
\int_0^T\int_{\mathbb{R}^d}\int_{\partial\Sigma^0} q \phi \, {\rm d}\sigma(y) \, {\rm d}x \, {\rm d}t = \displaystyle\int_0^T\int_{\mathbb{R}^d}\int_{\partial\Sigma^0} \left(f'(u_0)u_1-v_1\right) \phi \, {\rm d}\sigma(y) \, {\rm d}x \, {\rm d}t ,
$$ 
for all $\phi$ such that $\int\limits_{\partial\Sigma^0} \phi \, {\rm d}y = 0$.
Thus,
$$
q(t,x,y) = f'(u_0)(t,x)u_1(t,x,y)  - v_1(t,x,y) + l(t,x)
$$
for some function $l(t,x)$ which does not depend on $y$. 
Since, $u_1$ and $v_1$ are also defined up to 
the addition of a function solely dependent on $(t,x)$, we can get rid of $l(t,x)$ 
and we recover indeed the last line of (\ref{eq:tscv}).

The rest of the proof is now devoted to show that $u_0(t,x)$ is the solution of 
the homogenized equation (\ref{eq:hom}). For that goal, we shall pass to the 
limit in the coupled variational formulation of (\ref{eq:p-1})-(\ref{eq:p-3}),
\begin{equation}
\label{eq:fv}
\int_0^T\id\left[\frac{\partial u_\ve}{\partial t} \phi_\ve + \frac{1}{\ve} b_\ve \cdot \nabla u_\ve \phi_\ve + D_\ve \nabla u_\ve \cdot 
\nabla \phi_\ve\right] \, {\rm d}x \, {\rm d}t 
\end{equation}
$$
\hspace{1.7 cm}+ \ve \int_0^T\ib\left[\frac{\partial v_\ve}{\partial t} \psi_\ve + \frac{1}{\ve} b^s_\ve \cdot \nabla^s v_\ve \psi_\ve +D_\ve^s \nabla^s v_\ve \cdot 
\nabla^s \psi_\ve \right] \, {\rm d}\sigma(x) \, {\rm d}t
$$
$$
+ \frac{\kappa}{\ve}\int_0^T\ib\left[ \left(f(u_\ve) - v_\ve\right) \left(\phi_\ve - \psi_\ve\right) \right] \, {\rm d}\sigma(x) \, {\rm d}t= 0 ,
$$
with the test functions 
$$
\phi_\ve = \phi\left(t,x-\frac{b^* t}{\ve}\right) + \ve \phi_1\left(t,x-\frac{b^* t}{\ve}, \frac{x}{\ve}\right) ,
$$
$$
\psi_\ve = \phi\left(t,x-\frac{b^* t}{\ve}\right) + \ve \psi_1\left(t,x-\frac{b^* t}{\ve}, \frac{x}{\ve}\right) .
$$
Here $\phi(t,x)$, $\phi_1(t,x,y)$ and $\psi_1(t,x,y)$ are smooth compactly supported 
functions which vanish at $t=T$. Let us consider the convective terms in (\ref{eq:fv}) and 
perform integration by parts:
$$
\int_0^T\id \left(\frac{\partial u_\ve}{\partial t} + \frac{1}{\ve} b_\ve \cdot \nabla u_\ve \right) \phi_\ve \, {\rm d}x \, {\rm d}t 
+ \ve \int_0^T\ib \left( \frac{\partial v_\ve}{\partial t} + \frac{1}{\ve} b^s_\ve \cdot \nabla^s v_\ve\right) \psi_\ve \, {\rm d}\sigma(x) \, {\rm d}t
$$
$$
= -\int_0^T\id u_\ve \frac{\partial\phi}{\partial t}\drfg \, {\rm d}x \, {\rm d}t 
+ \frac{1}{\ve}\int_0^T\id u_\ve b^* \cdot \nabla_x \phi \drfg \, {\rm d}x \, {\rm d}t
$$
$$
+\int_0^T\id u_\ve b^* \cdot \nabla_x \phi_1 \drfyg \, {\rm d}x \, {\rm d}t - \id u^{in}(x) \phi(0,x) \, {\rm d}x + \mathcal O(\ve)
$$
$$
-\frac{1}{\ve}\int_0^T\id u_\ve b_\ve \cdot \nabla_x\phi\drfg\, {\rm d}x\, {\rm d}t 
+ \int_0^T\id b_\ve\ \cdot \nabla u_\ve \phi_1 \drfyg \, {\rm d}x \, {\rm d}t
$$
$$
-\ve \int_0^T\ib v_\ve \frac{\partial\phi}{\partial t}\drfg \, {\rm d}\sigma(x) \, {\rm d}t 
+ \int_0^T\ib v_\ve b^* \cdot \nabla_x\phi\drfg \, {\rm d}\sigma(x) \, {\rm d}t
$$
$$
+ \ve \int_0^T \ib v_\ve b^* \cdot \nabla_x \psi_1\drfyg \, {\rm d}\sigma(x) \, {\rm d}t 
- \ve \ib v^{in}(x) \phi(0,x) \, {\rm d}\sigma(x) + \mathcal O(\ve)
$$
$$
-\int_0^T\ib v_\ve b^s_\ve \cdot \nabla_x\phi\drfg \, {\rm d}\sigma(x) \, {\rm d}t 
+ \ve\int_0^T\ib b^s_\ve \cdot \nabla^s v_\ve \psi_1 \drfyg \, {\rm d}\sigma(x) \, {\rm d}t .
$$
We cannot directly pass to the two-scale limit since there are terms which 
apparently are of order $\ve^{-1}$. We thus regroup them and, recalling 
definition (\ref{def.mdrift}) of the transported function $\mdrift{\phi}$ 
and using the two auxiliary problems (\ref{eq:aux1}) and (\ref{eq:aux2}), 
we deduce
$$
\int_0^T\id u_\ve \frac{b^* - b_\ve}{\ve}\cdot\nabla_x \mdrift{\phi} \, {\rm d}x \, {\rm d}t 
+ \int_0^T\ib v_\ve \left(b^*-b^s_\ve\right) \cdot \nabla_x\mdrift{\phi} \, {\rm d}\sigma(x) \, {\rm d}t
$$
$$
= \ve\sum_{i=1}^d \int_0^T\id u_\ve \Delta\xi_i^\ve 
\partial_{x_i}\mdrift{\phi} \, {\rm d}x \, {\rm d}t + 
\ve^2\sum_{i=1}^d \int_0^T\ib v_\ve \Delta^s\Xi_i^\ve 
\partial_{x_i}\mdrift{\phi} \, {\rm d}\sigma(x) \, {\rm d}t 
$$
$$
= - \ve\sum_{i=1}^d \int_0^T\id \nabla \xi_i^\ve \cdot \nabla \Big( u_\ve \partial_{x_i}\mdrift{\phi} \Big) \, {\rm d}x \, {\rm d}t -
\ve^2\sum_{i=1}^d \int_0^T\ib \nabla^s\Xi_i^\ve \cdot \nabla \Big( v_\ve \partial_{x_i}\mdrift{\phi} \Big) \, {\rm d}\sigma(x) \, {\rm d}t ,
$$
for which we can pass to the two-scale limit.

In a first step, we choose $\phi\equiv0$ and we pass to the two-scale limit with drift 
in (\ref{eq:fv}). It yields
$$
\int_0^T\itr\iy u_0(t,x) \, b^* \cdot \nabla_x \phi_1 (t,x,y) \, {\rm d}y \, {\rm d}x \, {\rm d}t 
$$
$$
+ \int_0^T\itr\iy b(y) \cdot \left(\nabla_x u_0(t,x) + \nabla_y u_1(t,x,y)\right) \phi_1(t,x,y)\, {\rm d}y \, {\rm d}x \, {\rm d}t
$$
$$
+ \int_0^T\itr\iy D(y) \left(\nabla_x u_0(t,x) + \nabla_y u_1(t,x,y)\right) 
\cdot \nabla_y \phi_1(t,x,y) \, {\rm d}y \, {\rm d}x \, {\rm d}t
$$
$$
+ \int_0^T\itr\ip f(u_0) \, b^* \cdot \nabla_x \psi_1(t,x,y) \, {\rm d}\sigma(y) \, {\rm d}x \, {\rm d}t 
$$
$$
+ \int_0^T\itr \ip b^s(y) \cdot \left(f'(u_0) G\nabla_x u_0(t,x) + \nabla^s_y v_1(t,x,y)\right) \psi_1(t,x,y)\, {\rm d}\sigma(y) \, {\rm d}x \, {\rm d}t
$$
$$
+ \int_0^T\itr\ip D^s(y)\left(f'(u_0) G\nabla_x u_0(t,x) + \nabla^s_y v_1(t,x,y)\right)\cdot \nabla^s_y \psi_1(t,x,y)\, {\rm d}\sigma(y)\, {\rm d}x\, {\rm d}t
$$
$$
+ \kappa\int_0^T\itr\ip \left(f'(u_0)u_1 - v_1\right)\left(\phi_1 - \psi_1\right) \, {\rm d}\sigma(y)\, {\rm d}x\, {\rm d}t=0 ,
$$
which is nothing but the variational formulation of 
\begin{equation}
\left\{ 
\begin{array}{ll}
\dsp - b^* \cdot \nx u_0 + b\cdot(\nx u_0 + \ny u_1) - \div_y(D(\nx u_0 + \ny u_1)) = 0 
& \textrm{in } Y^0,\\[0.3cm]
\dsp - D\left( \nx u_0 + \ny u_1\right)\cdot n = \frac{\kappa}{(1+\beta u_0)^2}\left(\alpha u_1 - (1+\beta u_0)^2v_1 \right)  & \textrm{on}\:\:\partial \Sigma^0, \\[0.3cm]
\dsp - b^* \cdot \alpha\nx u_0 + b(y)\cdot(\alpha\nx u_0 + (1+\beta u_0)^2 \ny u_1) \\[0.3cm]
- \div^s_y(D^s(\alpha\nabla_x u_0 + (1+\beta u_0)^2\nabla^s_y v_1)) = \kappa \left(\alpha u_1 - (1+\beta u_0)^2 v_1\right) & \textrm{on}\:\:\partial \Sigma^0, \\[0.3cm]
y \to (u_1(y),v_1(y)) \quad Y-\textrm{periodic,} &
 \end{array}\right.
\label{eq:e-1}
\end{equation}
which implies that $u_1 = \chi(y) \cdot \nabla_x u_0$ and 
$v_1 = f'(u_0) \omega(y) \cdot \nabla_x u_0$ where $(\chi,\omega)$ 
is the solution of the cell problem (\ref{eq:cellpb}).

In a second step, we choose $\phi_1\equiv 0$, $\psi_1\equiv 0$ and 
we pass to the two-scale limit with drift in (\ref{eq:fv}). It yields
$$
|Y^0|\itr \frac{\partial u_0}{\partial t} \phi \, {\rm d}x \, {\rm d}t
+|\partial\Sigma^0|\itr f'(u_0)\frac{\partial u_0}{\partial t} \phi\, {\rm d}x \, {\rm d}t
$$
$$
-|Y^0|\int_{\mathbb{R}^d} u^{in}(x) \phi(0,x) \, {\rm d}x
-|\partial\Sigma^0|\int_{\mathbb{R}^d} v^{in}(x) \phi(0,x) \, {\rm d}x
$$
$$
+ \sum_{i,j=1}^d\:\itr\iy D_{ij}(y) \frac{\partial u_0}{\partial x_j}\frac{\partial \phi}{\partial x_i} \, {\rm d}x\, {\rm d}t
+ \itr\iy \sum_{i,j=1}^d\sum_{l=1}^d D_{il}(y) \frac{\partial\chi_j(y)}{\partial y_l}\frac{\partial u_0}{\partial x_j}\frac{\partial \phi}{\partial x_i} \, {\rm d}x\, {\rm d}t
$$
$$
+\itr\ip \sum_{i,j=1}^d\sum_{l=1}^d f'(u_0) D^s_{il}(y) G_{lj}(y) \frac{\partial u_0}{ \partial x_j}\frac{\partial\phi}{\partial x_i} \, {\rm d}\sigma(y)\, {\rm d}x\, {\rm d}t
$$
$$
+\itr\ip \sum_{i,j=1}^d\sum_{l=1}^d f'(u_0) D^s_{il}(y) \frac{\partial u_0}{\partial x_j}\frac{\partial^s\omega_j(y)}{\partial y_l}\frac{\partial\phi}{\partial x_i} \, {\rm d}\sigma(y)\, {\rm d}x\, {\rm d}t
$$
$$
+\itr\iy \sum_{i,j=1}^d \sum_{l=1}^d \frac{\partial\xi_i(y)}{\partial y_l} \frac{\partial\chi_j(y)}{\partial y_l}\frac{\partial u_0}{\partial{x_j}}\frac{\partial\phi}{\partial x_i}\, {\rm d}y\, {\rm d}x\, {\rm d}t
$$
$$
+\itr\ip \sum_{i,j=1}^d \sum_{l=1}^d f'(u_0) \frac{\partial^s\Xi_i(y)}{\partial y_l} \frac{\partial^s\omega_j(y)}{\partial y_l}\frac{\partial u_0}{\partial{x_j}}\frac{\partial\phi}{\partial x_i}\, {\rm d}\sigma(y)\, {\rm d}x\, {\rm d}t = 0 ,
$$
which is precisely the variational formulation of the homogenized problem (\ref{eq:hom}) 
where $A^*$ is given by
$$
A^*_{ij}(u_0) = \iy D e_i\cdot e_j\, {\rm d}y + \iy D\ny\chi_j\cdot e_i\, {\rm d}y
$$
$$
 + f'(u_0)\ip D^s e_i\cdot e_j\, {\rm d}\sigma(y) + f'(u_0)\ip D^s\ny^s\omega_j\cdot e_i\, {\rm d}\sigma(y)
$$
\begin{equation}
\label{eq:disp-sym}
+ \iy \ny\xi_i\cdot\ny\chi_j\, {\rm d}y +  f'(u_0)\ip\ny^s\Xi_i\cdot\ny^s\omega_j\, {\rm d}\sigma(y) .
\end{equation}
It remains to prove that formula (\ref{eq:disp-sym}) is equivalent to that announced in (\ref{exp-disp}). 
The two auxiliary problems (\ref{eq:aux1}) and (\ref{eq:aux2}) have to be used for transforming 
(\ref{eq:disp-sym}) into (\ref{exp-disp}). Let us test (\ref{eq:aux1}) for $\xi_i$ by the cell solution $\chi_j$ followed by testing  (\ref{eq:aux2}) for $\Xi_i$ by $f'(u_0)\omega_j$. Adding the thus obtained expressions leads to
$$
\iy\ny\xi_i\cdot\ny\chi_j\, {\rm d}y + f'(u_0)\ip\ny^s\Xi_i\cdot\ny^s\omega_j\, {\rm d}\sigma(y)
$$
\begin{equation}
\label{eq:disp-aux}
= \iy\left(b^*_i - b_i\right)\chi_j\, {\rm d}y + f'(u_0)\ip\left(b^*_i - b^s_i\right)\omega_j\, {\rm d}\sigma(y).
\end{equation}
Next, in the variational formulation (\ref{eq:vf-cpb}) for $(\chi_i,\omega_i)$, we shall replace the test functions by $(\chi_j,\omega_j)$:
$$
\iy\left(b^*_i - b_i(y)\right)\chi_j(y)\, {\rm d}y + f'(u_0)\ip\left(b^*_i - b^s_i(y)\right)\omega_j(y)\, {\rm d}\sigma(y)
$$
$$
= \iy D(y) \ny\chi_i\cdot \ny\chi_j \, {\rm d}y + f'(u_0) \ip D^s(y)\ny^s \omega_i\cdot \ny^s\omega_j \, {\rm d}\sigma(y)
$$
$$
+ \iy D\ny\chi_j\cdot e_i\, {\rm d}y + f'(u_0)\ip D^s\ny^s\omega_j\cdot e_i\, {\rm d}\sigma(y)
$$
\begin{equation}
\label{eq:disp-cpb}
+ \kappa f'(u_0)\ip \left[\chi_i - \omega_i\right]\left[\chi_j - \omega_j\right] \, {\rm d}\sigma(y) .
\end{equation}
Finally, using (\ref{eq:disp-aux}) and (\ref{eq:disp-cpb}) in (\ref{eq:disp-sym}) shows that 
both formulas (\ref{eq:disp-sym}) and (\ref{exp-disp}) for the dispersion tensor $A^*$ are 
equivalent. 
Therefore, we have indeed obtained the variational formulation of the 
homogenized problem (\ref{eq:hom}) which, by Lemma \ref{lem.uniq}, admits a unique solution. 
As a consequence of uniqueness, the entire sequence converges, not merely a subsequence.
\end{proof}

\begin{rem}
\label{rem:bxy}
We are assuming that the velocity fields are purely periodic functions, depending only on the fast variable $y=x/\varepsilon$ and not on the slow variable $x$. In particular, we are unable to treat the case of more general locally periodic velocity fields of the type $b_\varepsilon(x) = b(x,x/\varepsilon)$ and $b^s_\varepsilon(x) = b^s(x,x/\varepsilon)$ where $b(x,y)$ and $b^s(x,y)$ are smooth divergence-free, with respect to both variables, vector fields. The main technical reason is that the homogenized drift $b^*$ would then depend on $x$ which cannot be handled by our method. We are lacking the adequate tools (even formal ones) to guess the correct effective limit. Even more, we know from \cite{AO} that, under special assumptions on the coefficients depending on $x$ and $y$, a new localization phenomenon can happen which is completely different from the asymptotic behavior proved in the present work. 

We are also assuming condition (\ref{eq:drift}) of equal bulk and surface drifts. 
If it is not satisfied, we don't know how to homogenize the nonlinear problem although 
the linear case is well understood \cite{AllaireHutridurga}. 
\end{rem}

\begin{rem}
\label{rem:dxy:local}
In Remark \ref{rem:bxy}, we noticed that our approach cannot handle locally periodic velocity fields. However, if the diffusion tensors
are of such type, i.e., $D_\ve(x)=D(x,x/\ve)$, $D^s_\ve(x)=D^s(x,x/\ve)$,
then it does not change the definition of the drift $b^*$ which
still makes the cell problem (\ref{eq:cellpb}) well-posed.
Our above analysis can be carried out for locally periodic
diffusion tensors with only
minor modifications. In particular, some derivatives with respect to $x$ of $D$ and $D^s$ appear in the homogenized equation (\ref{eq:hom}).
\end{rem}

\begin{rem}
\label{rem:freundlich}
According to the literature (see e.g. \cite{vDK92}), there are two kinds of concave isotherms - Langmuir and Freundlich. A function $f(u)$ is said to be of Langmuir type if it is strictly concave near $u = 0$ and $f'(0+) < +\infty$. On the other hand, $f(u)$ is said to be of Freundlich type if it is strictly concave near $u = 0$ and $f'(0+) = +\infty$. An example of one such isotherm is $f(u) = K\: u^p$, $0 < p < 1$ and $K>0$ an equilibrium constant. In the case of a Freundlich isotherm, a formal analysis, based on two-scale 
asymptotic expansions with drift, would yield the same results, namely homogenized and cell problems, 
as in Theorem \ref{main-weak}. Even with $f'(0+) = +\infty$ these results have a meaning (in particular, 
for $u_0=0$ it forces the equality $\chi_i=\omega_i$). However we are unable to rigorously prove the 
convergence of the homogenization process. 
\end{rem}

\section{Strong Convergence}
\label{sec:strong}

Theorem \ref{main-weak} gives a weak type convergence result for the sequences 
$u_\ve$ and $v_\ve$ in the sense of two-scale convergence with drift. Thanks 
to the strong compactness of Corollary \ref{cor:comp-u} it was immediately 
improved in Remark \ref{rem.strong} as a strong convergence result for $u_\ve$ 
in the $L^2$-norm. In the present section, we recover this result and additionally 
prove the strong convergence of $v_\ve$ and of their gradients, up to the addition 
of some corrector terms. 
The main idea is to show that the energy associated with (\ref{eq:p-1})-(\ref{eq:p-3}) 
converges to that of the homogenized equation (\ref{eq:hom}). This is shown to work under a 
specific constraint on the initial data $(u^{in},v^{in})$ which must be well prepared 
(see below). Then, our argument relies on the notion of strong two-scale convergence 
which is recalled in Lemma \ref{prop:strocv}. 
Theorem \ref{thm:strong} is the main result of the section. Following ideas of \cite{AllaireMikelic10, AllaireHutridurga}, its proof relies on the lower semicontinuity property of the norms 
with respect to the (weak) two-scale convergence. The additional difficulty is the 
nonlinear terms which arise in the energy equality (\ref{eq:energy}). 
Lemma \ref{lem:convex} is a technical result of strong two-scale convergence 
adapted to our nonlinear setting. 

Let us explain the assumption on the well prepared character of the initial data 
and its origin. 
We denote by $u^0_0(x)$ the initial data of the homogenized problem (\ref{eq:hom}), 
which is defined on $\mathbb{R}^d$ by
\begin{equation}
\label{eq:comp2}
|Y^0|u^0_0 + |\partial\Sigma^0|f(u^0_0) = |Y^0|u^{in} + |\partial\Sigma^0|v^{in} .
\end{equation}
Since $f$ is non negative and increasing, (\ref{eq:comp2}) uniquely defines $u^0_0$ 
as a nonlinear function of $(u^{in},v^{in})$. 
It will turn out that, passing to the limit in the energy equality, and thus 
deducing strong convergence, requires another constraint for $u^0_0$ which is
\begin{equation}
\label{eq:comp1}
|Y^0| F(u^0_0) + \frac 12 |\partial\Sigma^0| f^2(u^0_0) = 
|Y^0| F(u^{in}) + \frac12 |\partial\Sigma^0| (v^{in})^2 ,
\end{equation}
where $F$ is the primitive of $f$. In general, (\ref{eq:comp2}) and (\ref{eq:comp1}) 
are not compatible, except if the initial data $(u^{in},v^{in})$ satisfies a 
compatibility condition which, for the moment, we admittedly write as a nonlinear 
relationship:
\begin{equation}
\label{eq:comp3}
{\mathcal H}(u^{in},v^{in})=0 . 
\end{equation}
Typically, if $u^{in}$ is known, (\ref{eq:comp3}) prescribes a given value 
for $v^{in}$. 
Lemma \ref{lem:comp} will investigate the existence and uniqueness of a solution 
$v^{in}$ in terms of given $u^{in}$. In the linear case, namely $f(u)=\alpha u$, 
(\ref{eq:comp3}) reduces to the explicit relationship $v^{in} = \alpha u^{in}$.

\begin{thm}
\label{thm:strong}
Let the initial data $(u^{in},v^{in})$ satisfy the nonlinear equation (\ref{eq:comp3}). 
Then the sequences $u_\ve(t,x)$ and $v_\ve(t,x)$ strongly two-scale converge 
with drift in the sense that 
\begin{equation}
\label{eq:strongdef}
\begin{array}{ll}
\displaystyle \lim_{\ve \to 0} \left\|u_\ve(t,x) - u_0\left(t, x-\frac{b^*}{\ve}t\right)\right\|_{L^2((0,T)\times \Omega_\ve)} = 0 , \\[0.5cm]
\displaystyle \lim_{\ve \to 0} \sqrt{\ve} \left\|v_\ve(t,x) - f(u_0)\left(t, x-\frac{b^*}{\ve}t\right)\right\|_{L^2((0,T)\times \partial\Omega_\ve)} = 0 .
\end{array}
\end{equation}
Similarly, the gradients of $u_\ve(t,x)$ and $v_\ve(t,x)$ strongly two-scale converge 
with drift in the sense that 
\begin{equation}
\label{eq:strongdef-grad}
\lim_{\ve \to 0} \left\|\nabla u_\ve(t,x) - \nabla u_0\left(t, x-\frac{b^*}{\ve}t\right) - \nabla_y u_1\left(t, x-\frac{b^*}{\ve}t,\frac{x}{\ve}\right) \right\|_{L^2((0,T)\times \Omega_\ve)} = 0 , 
\end{equation}
with $u_1(t,x,y) = \chi(y) \cdot \nabla_x u_0(t,x)$, and
\begin{equation}
\label{eq:strongdef2}
\lim_{\ve \to 0} \sqrt{\ve} \left\| \nabla^s v_\ve(t,x) - G(\frac{x}{\ve}) 
\nabla f(u_0) - \nabla_y^s v_1\left(t, x-\frac{b^*}{\ve}t,\frac{x}{\ve}\right)\right\|_{L^2((0,T)\times \partial\Omega_\ve)} = 0 .
\end{equation}
with $v_1(t,x,y) = \omega(y) \cdot \nabla_x f(u_0)(t,x)$.
\end{thm}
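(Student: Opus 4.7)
The strategy I would follow is the classical energy method for strong convergence: compare the time-integrated energy equality \ref{eq:energy} of the $\ve$-problem with an energy identity for the homogenized problem \ref{eq:hom} obtained by testing it against $f(u_0)$. If both sides match in the limit $\ve\to 0$, every lower-semicontinuity inequality along the way must collapse into an equality, and convergence of norms combined with the weak two-scale limits of Theorem \ref{main-weak} then delivers strong two-scale convergence with drift.

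First I would derive the homogenized energy identity. Multiplying \ref{eq:hom} by $f(u_0)$ and using the chain rule
$[|Y^0|+|\partial\Sigma^0|f'(u_0)]\,f(u_0)\,\partial_t u_0 = |Y^0|\partial_t F(u_0) + \tfrac12|\partial\Sigma^0|\partial_t f^2(u_0)$,
together with the observation that the skew-symmetric part of $A^*$ drops out since $\nx u_0$ and $\nx f(u_0)=f'(u_0)\nx u_0$ are parallel, one obtains
\begin{equation*}
\itr\!\Big[|Y^0|F(u_0)(T)+\tfrac12|\partial\Sigma^0|f^2(u_0)(T)\Big]dx + \iti\!\itr A^*(u_0)\nx u_0\!\cdot\!\nx f(u_0)\,dx\,dt = \itr\!\Big[|Y^0|F(u_0^0)+\tfrac12|\partial\Sigma^0|f^2(u_0^0)\Big]dx.
\end{equation*}
The compatibility assumption \ref{eq:comp3} (namely \ref{eq:comp1} together with \ref{eq:comp2}) is designed precisely so that the right-hand side equals $|Y^0|\int F(u^{in})+\tfrac12|\partial\Sigma^0|\int|v^{in}|^2$, which in turn is the limit of the initial $\ve$-energy $\int_{\Omega_\ve}F(u^{in})+\tfrac{\ve}{2}\int_{\partial\Omega_\ve}|v^{in}|^2$ by standard periodic averaging and trace arguments.

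Next I would pass to the liminf in the integrated equality \ref{eq:energy}. For the three dissipation terms lumped together as $\mathfrak D_\ve(T)$, the lower-semicontinuity of $L^2$-norms under (weak) two-scale convergence with drift, applied to the limits $\nabla u_\ve\ts\nx u_0+\ny u_1$, $\nabla^s v_\ve\tss f'(u_0)(G\nx u_0+\ny^s v_1/f'(u_0))$ and $\ve^{-1}(f(u_\ve)-v_\ve)\tss f'(u_0)(\chi-\omega)\!\cdot\!\nx u_0$ from Theorem \ref{main-weak}, combined with the strong compactness of $u_\ve$ from Corollary \ref{cor:comp-u} (needed to pass to the limit in the nonlinear weight $f'(u_\ve)$), yields a cell quadratic form which, after testing the variational cell problem \ref{eq:vf-cpb} against $(\chi_i,\omega_i)$ and using the equivalence between \ref{eq:disp-sym} and \ref{exp-disp}, equals $\iti\itr A^*(u_0)\nx u_0\!\cdot\!\nx f(u_0)\,dx\,dt$. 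For the time-$T$ mass terms, Corollary \ref{cor:comp-u} together with the maximum principle of Proposition \ref{prop:max} gives $\int_{\Omega_\ve}F(u_\ve)(T)\to|Y^0|\itr F(u_0)(T)\,dx$, while lower-semicontinuity yields $\liminf\tfrac{\ve}{2}\int_{\partial\Omega_\ve}|v_\ve(T)|^2\geq\tfrac12|\partial\Sigma^0|\itr f^2(u_0)(T)\,dx$. The matched initial data then forces equality throughout.

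Converting each such equality back into convergence of $L^2$-norms, and combining with the weak two-scale limits of Theorem \ref{main-weak}, gives strong two-scale convergence with drift for all five sequences, from which \ref{eq:strongdef}, \ref{eq:strongdef-grad} and \ref{eq:strongdef2} follow by a standard un-rescaling in moving coordinates. The main obstacle is the nonlinear weight $f'(u_\ve)$ in the bulk dissipation: weak two-scale convergence of $\nabla u_\ve$ does not a priori commute with the factor $f'(u_\ve)$. This is exactly the role of Lemma \ref{lem:convex}, a nonlinear variant of the strong two-scale convergence toolbox, which exploits the a.e.\ convergence of $\drift u_\ve$ provided by Corollary \ref{cor:comp-u} together with the uniform $L^\infty$-bound from Proposition \ref{prop:max}. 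A secondary subtlety is solvability of the compatibility condition \ref{eq:comp3} for $v^{in}$ in terms of prescribed $u^{in}$, which is handled by Lemma \ref{lem:comp}.
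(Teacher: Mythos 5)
Your overall strategy is exactly the one the paper follows: integrate the energy equality (\ref{eq:energy}) in time, pass to the liminf using lower semicontinuity of norms under two-scale convergence with drift together with the strong compactness of Corollary \ref{cor:comp-u} to handle the nonlinear weight $f'(u_\ve)$, identify the limit of the dissipation as $\int_0^T\!\!\int_0^t\!\itr A^*(u_0)\nabla_x u_0\cdot\nabla_x f(u_0)$ via the cell variational formulation, compare with the homogenized energy identity obtained by testing (\ref{eq:hom}) against $f(u_0)$, invoke (\ref{eq:comp3}) to match the right-hand sides, and finally convert the resulting equality of energies into strong convergence through Lemma \ref{lem:convex} and Lemma \ref{prop:strocv}. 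So there is no divergence of method.

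There is, however, one step in your write-up that would fail as stated: the treatment of the mass terms at the final time. You claim that Corollary \ref{cor:comp-u} and the maximum principle give
$\id F(u_\ve)(T)\,{\rm d}x \to |Y^0|\itr F(u_0)(T)\,{\rm d}x$ and that lower semicontinuity gives
$\liminf \frac{\ve}{2}\ib |v_\ve(T)|^2\,{\rm d}\sigma(x) \geq \frac12|\partial\Sigma^0|\itr f^2(u_0)(T)\,{\rm d}x$.
But Corollary \ref{cor:comp-u} only provides compactness in $L^2((0,T)\times\Omega_\ve)$, and two-scale convergence with drift is likewise defined only against test functions integrated over the full time-space cylinder; neither gives you any convergence (or even a well-defined two-scale limit) of $u_\ve(T,\cdot)$ or $v_\ve(T,\cdot)$ at the single time slice $t=T$. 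The paper sidesteps this precisely by integrating the energy equality first over $(0,t)$ and then a second time over $(0,T)$, so that the fixed-time terms become $\iti\id F(u_\ve)(t)\,{\rm d}x\,{\rm d}t$ and $\frac{\ve}{2}\iti\ib |v_\ve(t)|^2\,{\rm d}\sigma(x)\,{\rm d}t$, to which Corollary \ref{cor:comp-u} and lower semicontinuity do apply, while the initial-data terms simply acquire a harmless factor $T$. Your argument goes through verbatim once you insert this second time integration; without it, the passage to the limit in the time-$T$ terms is unjustified. The remaining ingredients of your proposal (cancellation of the skew-symmetric part of $A^*$ against parallel gradients, the role of Lemma \ref{lem:comp} for solvability of (\ref{eq:comp3}), and the use of Lemma \ref{lem:convex} for the nonlinear weight) are all consistent with the paper.
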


\begin{rem}
If the well prepared assumption (\ref{eq:comp3}) is not satisfied we believe that strong 
convergence, in the sense of Theorem \ref{thm:strong}, still holds true. This was indeed 
proved for the linear case in \cite{AllaireMikelic10}. The mechanism is that, after a 
time $t_0$ as small as we wish, diffusion relaxes any initial data to an almost well prepared 
solution $u_\ve(\cdot,t_0)$, $v_\ve(\cdot,t_0)$ which can serve as a well prepared initial 
data starting at time $t_0$. There are technical difficulties for proving such a result in the 
nonlinear case which we did not overcome. Let us emphasize that the strong convergence of 
the solutions (in the norms of Theorem \ref{thm:strong}) can hold true even though the 
energy associated to the variational formulation of (\ref{eq:p-1})-(\ref{eq:p-3}) does 
not converge to the corresponding homogenized energy (a fact which is well documented, 
see e.g. \cite{BFM}). Note also that we speak of ``energy'' in the mathematical sense 
and they do not seem to have any physical meaning in the context of reactive transport. 
\end{rem}

Before proving this theorem we recall the notion of strong two-scale convergence 
which was originally introduced in Theorem 1.8 in \cite{Allaire92}. It was further 
extended to the case of sequences on periodic surfaces in \cite{Allaire96} and 
to the case of two-scale convergence with drift in \cite{Allaire08}. Of course, 
we can blend these two ingredients and we easily obtain the following result 
that we state without proof. The context and notations are those of Proposition 
\ref{compact-bdry}.

\begin{lem}
\label{prop:strocv}
Let $(V_\ve)_{\ve>0}$ be a sequence in $L^2((0,T)\times\partial\Omega_\ve)$ 
which two-scale converges with drift to a limit $V_0(t,x,y)\in 
L^2((0,T)\times\R^d\times\partial\Sigma^0)$. It satisfies 
\begin{equation}
\label{eq:strocv}
\lim_{\ve\to 0} \sqrt{\ve}\|V_\ve\|_{L^2((0,T)\times\partial\Omega_\ve)} \geq \|V_0\|_{L^2((0,T)\times\R^d\times\partial\Sigma^0)} .
\end{equation}
Assume further that the inequality in (\ref{eq:strocv}) is an equality. 
Then, $V_\ve$ is said to two-scale converges with drift strongly and, 
if $V_0(t,x,y)$ is smooth enough, say $V_0(t,x,y)\in L^2\left( (0,T)\times\R^d ; 
C_\#(\partial\Sigma^0) \right)$, it satisfies 
$$
\lim_{\ve\to0} \ve\dsp\int_0^T \int_{\R^d} \left| V_\ve(t,x) - V_0\left(t,x - \frac{{\mathcal V}}{\ve} t ,\frac{x}{\ve}\right) 
\right|^2 \, {\rm d}x\, {\rm d}t = 0 .
$$
\end{lem}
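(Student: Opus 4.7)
Both claims rest on expanding the non-negative square
\begin{equation*}
E_\ve(\phi) := \ve \int_0^T\int_{\partial\Omega_\ve} \bigl|V_\ve(t,x) - \phi\bigl(t, x-{\mathcal V} t/\ve, x/\ve\bigr)\bigr|^2\,{\rm d}\sigma(x)\,{\rm d}t \ge 0
\end{equation*}
and identifying the $\ve\to 0$ limits of its three pieces via two classical inputs: (i) the cross-term convergence from the very definition of two-scale convergence with drift (Proposition \ref{compact-bdry}) applied to smooth test functions, and (ii) the admissibility identity
\begin{equation*}
\lim_{\ve\to 0} \ve\int_0^T\int_{\partial\Omega_\ve} \bigl|\phi(t, x-{\mathcal V} t/\ve, x/\ve)\bigr|^2\,{\rm d}\sigma(x)\,{\rm d}t = \int_0^T\int_{\R^d}\int_{\partial\Sigma^0} |\phi|^2\,{\rm d}\sigma(y)\,{\rm d}x\,{\rm d}t,
\end{equation*}
which for $\phi \in C_0^\infty((0,T)\times\R^d; C_\#^\infty(Y))$ follows from a translation to the moving frame together with the standard Nguetseng periodic-surface mean value, the residual phase $\mathcal V t/\ve^2$ being absorbed by $y$-periodicity.

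\emph{Lower semicontinuity.} For smooth $\phi$ as above, I would expand $E_\ve(\phi) \ge 0$ and apply (i)--(ii) to obtain
\begin{equation*}
\liminf_{\ve\to 0} \ve\|V_\ve\|^2_{L^2((0,T)\times\partial\Omega_\ve)} \ge 2\langle V_0, \phi\rangle - \|\phi\|^2,
\end{equation*}
with both pairings taken in $L^2((0,T)\times\R^d\times\partial\Sigma^0)$. Then I would approximate $V_0$ in this space by a sequence of smooth $\phi_n$ of the above type and let $n\to\infty$ to produce the inequality (\ref{eq:strocv}).

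\emph{Strong convergence under equality.} Assume now equality in (\ref{eq:strocv}) and the regularity $V_0 \in L^2((0,T)\times\R^d; C_\#(\partial\Sigma^0))$, which makes the evaluation $V_0(t, x-{\mathcal V} t/\ve, x/\ve)$ pointwise meaningful on $\partial\Omega_\ve$. The plan is to show that limits (i) and (ii) both extend to $\phi$ replaced by $V_0$: the cross-term convergence extends because continuity in $y$ promotes $V_0$ to an admissible test function for two-scale convergence with drift, and the quadratic limit extends by approximating $V_0$ by smooth $\phi_n$ in the $L^2(C_\#)$-norm and absorbing the error via the uniform trace bound
\begin{equation*}
\ve\int_0^T\int_{\partial\Omega_\ve}\bigl|\Psi(t, x-{\mathcal V} t/\ve, x/\ve)\bigr|^2\,{\rm d}\sigma(x)\,{\rm d}t \le C\,\|\Psi\|^2_{L^2((0,T)\times\R^d; C_\#(\partial\Sigma^0))}.
\end{equation*}
Substituting these two extended limits into $E_\ve(V_0)$ and invoking the equality hypothesis yields $\lim_{\ve\to0} E_\ve(V_0) = \|V_0\|^2 - 2\|V_0\|^2 + \|V_0\|^2 = 0$, which is exactly the strong convergence claimed.

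\emph{Principal obstacle.} The only substantive step is the promotion of $V_0$ itself to an admissible test function in both surface-integral limits of the second stage; this is the sole role of the smoothness hypothesis $V_0 \in L^2((0,T)\times\R^d; C_\#(\partial\Sigma^0))$. Continuity in the microscopic variable $y$ is precisely what makes the pointwise evaluation on the $\ve$-periodic surface $\partial\Omega_\ve$ well defined and uniformly controllable in $\ve$; without it, no density argument can push the quadratic surface integral through to its two-scale limit, so the stated pointwise form of strong convergence would fail.
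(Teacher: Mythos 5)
Your argument is correct, and it is precisely the classical proof of strong two-scale convergence: expand the nonnegative square $E_\ve(\phi)\ge 0$, pass to the limit in the cross and quadratic terms using Proposition \ref{compact-bdry} and the mean-value property of admissible oscillating surface test functions, obtain the liminf inequality by density of smooth $\phi$ in $L^2((0,T)\times\R^d\times\partial\Sigma^0)$, and then, under equality, promote $V_0\in L^2((0,T)\times\R^d;C_\#(\partial\Sigma^0))$ to an admissible test function so that $E_\ve(V_0)\to 0$. The paper deliberately states this lemma without proof, citing Theorem 1.8 of \cite{Allaire92} and its extensions in \cite{Allaire96, Allaire08}, and your blind reconstruction coincides with the argument given in those references (note only that the displayed conclusion in the lemma should read as a surface integral over $\partial\Omega_\ve$ with ${\rm d}\sigma(x)$, and the limit in (\ref{eq:strocv}) as a liminf, exactly as you interpreted them).
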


We now prove a technical result which amounts to say that the $L^2$-norm 
can be replaced by a convex functional in the definition of strong two-scale 
convergence. 

\begin{lem}
\label{lem:convex}
Let ${\mathcal A}:\mathbb{R}\to\mathbb{R}$ be a strongly convex function in the sense 
that there exists a constant $a>0$ such that, for any $u,v\in\mathbb{R}$ and any $\theta\in[0,1]$, 
it satisfies
$$
{\mathcal A}\left(\theta u + (1-\theta) v\right) \leq \theta{\mathcal A}(u) + (1-\theta){\mathcal A}(v) 
-\frac a 2 \theta(1-\theta) \left|u-v\right|^2 .
$$
Let $\{U_\ve(t,x)\}$ be a sequence that two scale converges with drift to $U_0(t,x,y)$. If
$$
\displaystyle \lim_{\ve\to0}\left\|{\mathcal A}(U_\ve)(t,x)\right\|_{L^1((0,T)\times\Omega_\ve)}=\left\|{\mathcal A}(U_0)\left(t, x, y\right)\right\|_{L^1((0,T)\times\mathbb{R}^d\times Y^0)}
$$
then
\begin{equation}
\label{eq.strongcv}
\lim_{\ve \to 0} \left\|U_\ve(t,x) - U_0\left(t, x-\frac{b^*}{\ve}t, \frac{x}{\ve}\right)\right\|_{L^2((0,T)\times \Omega_\ve)} = 0 .
\end{equation}
\end{lem}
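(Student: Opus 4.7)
The plan is to exploit the strong convexity inequality at $\theta = 1/2$, which in pointwise form reads
\[
\frac{a}{8}\,|u-v|^2 \;\leq\; \frac{{\mathcal A}(u)+{\mathcal A}(v)}{2} - {\mathcal A}\!\left(\frac{u+v}{2}\right),
\]
and apply it with $u = U_\ve(t,x)$ and $v = \phi_\ve(t,x) := \widetilde{U}\!\left(t,\,x-\tfrac{b^*t}{\ve},\,\tfrac{x}{\ve}\right)$, where $\widetilde U(t,x,y)$ is a smooth, compactly supported in $(t,x)$, $Y$-periodic in $y$ approximation of $U_0$ in $L^2((0,T)\times\RR^d\times Y^0)$ (eventually we take $\widetilde U\to U_0$). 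Integrating over $(0,T)\times\Omega_\ve$ yields
\[
\frac{a}{8}\iti\id |U_\ve-\phi_\ve|^2 \;\leq\; \frac{1}{2}\iti\id {\mathcal A}(U_\ve) + \frac{1}{2}\iti\id {\mathcal A}(\phi_\ve) - \iti\id {\mathcal A}\!\left(\frac{U_\ve+\phi_\ve}{2}\right).
\]

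The core of the proof is a termwise passage to the limit in the right-hand side. The first term converges to $\iti\itr\iy {\mathcal A}(U_0)$ by the standing hypothesis. The second converges to $\iti\itr\iy {\mathcal A}(\widetilde U)$, because $\phi_\ve$ is an admissible test function for two-scale convergence with drift and ${\mathcal A}$ is continuous with at most quadratic growth in the intended range (both conditions hold in the applications of the lemma). For the third term I use that $(U_\ve+\phi_\ve)/2$ two-scale converges with drift to $(U_0+\widetilde U)/2$ (by linearity of two-scale convergence) and invoke the weak lower semicontinuity of convex integrals in the two-scale setting,
\[
\liminf_{\ve\to 0}\iti\id {\mathcal A}\!\left(\frac{U_\ve+\phi_\ve}{2}\right) \;\geq\; \iti\itr\iy {\mathcal A}\!\left(\frac{U_0+\widetilde U}{2}\right),
\]
which is obtained by testing against smooth $Y$-periodic profiles and applying Jensen's inequality cell-by-cell.

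Combining the three limits gives
\[
\limsup_{\ve\to 0}\frac{a}{8}\iti\id |U_\ve-\phi_\ve|^2 \;\leq\; \iti\itr\iy \left[\frac{{\mathcal A}(U_0)+{\mathcal A}(\widetilde U)}{2}-{\mathcal A}\!\left(\frac{U_0+\widetilde U}{2}\right)\right],
\]
whose right-hand side is non-negative and, by continuity of ${\mathcal A}$ together with dominated convergence, vanishes as $\widetilde U\to U_0$ in $L^2((0,T)\times\RR^d\times Y^0)$. A triangle inequality, together with the fact that $\|\phi_\ve - U_0(t,\cdot-b^*t/\ve,\cdot/\ve)\|_{L^2((0,T)\times\Omega_\ve)}$ is bounded by $\|\widetilde U - U_0\|_{L^2((0,T)\times\RR^d\times Y^0)}$ via the change of variables $x\mapsto x + b^*t/\ve$, then delivers (\ref{eq.strongcv}). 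The \emph{main obstacle} is the rigorous two-scale lower semicontinuity of the nonlinear integral in the third step: the smoothness of $\phi_\ve$ is essential so that $(U_\ve+\phi_\ve)/2$ splits as the sum of a weakly two-scale convergent sequence and an admissible test function, at which point the convexity of ${\mathcal A}$ can be combined with the standard two-scale test-function machinery in the spirit of the proof of strong two-scale convergence in \cite{Allaire92}.
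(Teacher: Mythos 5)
Your proof is correct and follows essentially the same route as the paper's: the strong convexity inequality at $\theta=\tfrac12$, integration over $(0,T)\times\Omega_\ve$, lower semicontinuity of the convex integral under weak two-scale convergence with drift, and the energy-convergence hypothesis to force the quadratic remainder to vanish. The only difference is that you interpose a smooth approximation $\widetilde U$ of $U_0$ before passing to the limit --- a refinement that addresses the admissibility of $U_0$ as an oscillating test function, which the paper's proof leaves implicit by comparing $U_\ve$ directly with $U_0\left(t,x-\frac{b^*t}{\ve},\frac{x}{\ve}\right)$.
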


\begin{proof}
Since ${\mathcal A}$ is convex and proper (finite), it is continuous and thus, up to an additive 
constant which plays no role, non negative. The strong convexity of ${\mathcal A}$ yields 
$$
{\mathcal A}\left(\theta U_\ve(t,x) + (1-\theta) U_0\left(t,x-\frac{b^*t}{\ve},\frac{x}{\ve}\right)\right)\leq \theta{\mathcal A}(U_\ve)(t,x)
$$
$$
+(1-\theta){\mathcal A}(U_0)\left(t,x-\frac{b^*t}{\ve},\frac{x}{\ve}\right)-\frac a 2 \theta(1-\theta) \left|U_\ve(t,x)-U_0\left(t,x-\frac{b^*t}{\ve},\frac{x}{\ve}\right)\right|^2 .
$$
Taking $\theta=\frac12$ and integrating over $\Omega_\ve\times(0,T)$, we get
$$
\iti\id{\mathcal A}\left(\frac{U_\ve(t,x)+U_0\left(t,x-\frac{b^*t}{\ve},\frac{x}{\ve}\right)}{2}\right)
+ \frac a 8\iti\id\left|U_\ve(t,x)-U_0\left(t,x-\frac{b^*t}{\ve},\frac{x}{\ve}\right)\right|^2
$$ 
\begin{equation}
\label{eq:ineqconvex}
\leq\frac12\iti\id{\mathcal A}(U_\ve)(t,x)+\frac12\iti\id{\mathcal A}(U_0)\left(t,x-\frac{b^*t}{\ve},\frac{x}{\ve}\right) .
\end{equation}
Because of the lower semi-continuity property of convex functions with respect to the weak two-scale convergence with drift, we have
$$
\iti\itr\iy{\mathcal A}(U_0)(t,x,y)\leq\iti\id{\mathcal A}\left(\frac{U_\ve(t,x)+U_0\left(t,x-\frac{b^*t}{\ve},\frac{x}{\ve}\right)}{2}\right) .
$$
Upon passing to the limit, as $\ve\to0$, the right hand side of (\ref{eq:ineqconvex}) 
is exactly equal to the left hand side of the above inequality because of our hypothesis 
on ${\mathcal A}(U_\ve)$, which yields the desired result (\ref{eq.strongcv}).
\end{proof}

We now are ready to prove the main result of this section. 

\begin{proof}[Proof of Theorem \ref{thm:strong}]
Following an idea from \cite{AllaireHutridurga, AllaireMikelic10}, 
we prove that the energy associated with (\ref{eq:p-1})-(\ref{eq:p-3}) converges 
to that of the homogenized equation (\ref{eq:hom}) under assumption (\ref{eq:comp3}). 
Integrating (\ref{eq:energy}) over $(0,t)$ yields
$$
\id F(u_\ve)(t) \, {\rm d}x + \frac{\ve}{2} \ib |v_\ve(t)|^2 \, {\rm d}\sigma(x) + \itt \id f'(u_\ve) D_\ve \nabla u_\ve \cdot \nabla u_\ve \, {\rm d}x \, {\rm d}s
$$
$$
+ \ve\itt \ib D^s_\ve\nabla^s v_\ve \cdot\nabla^s v_\ve \, {\rm d}\sigma(x)\, {\rm d}s + \kappa\ve \itt\ib \left(w_\ve\right)^2 \, {\rm d}\sigma(x)\, {\rm d}s
$$
$$
= \id F(u^{in}) \, {\rm d}x + \frac{\ve}{2} \ib |v^{in}|^2 \, {\rm d}\sigma(x) ,
$$
with $w_\ve = (f(u_\ve) - v_\ve)/\ve$. 
Since two-scale convergence with drift holds only in a time-space product interval, 
we integrate again the above expression over $(0,T)$ to get
$$
\iti\id F(u_\ve)(t) \, {\rm d}x\, {\rm d}t + \frac{\ve}{2} \iti\ib |v_\ve(t)|^2 \, {\rm d}\sigma(x)\, {\rm d}t + 
\iti\itt \id f'(u_\ve) D_\ve \nabla u_\ve \cdot \nabla u_\ve \, {\rm d}x \, {\rm d}s\, {\rm d}t
$$
$$
+ \ve\iti\itt \ib D^s_\ve\nabla^s v_\ve\cdot\nabla^s v_\ve \, {\rm d}\sigma(x)\, {\rm d}s\, {\rm d}t
+ \kappa\ve \iti\itt\ib \left(w_\ve\right)^2 \, {\rm d}\sigma(x)\, {\rm d}s\, {\rm d}t
$$
$$
= T \id F(u^{in}) \, {\rm d}x + \frac{T\ve}{2} \ib |v^{in}|^2 \, {\rm d}\sigma(x) .
$$
We pass to the two-scale limit in all terms of the above left hand side 
by using the lower semi-continuity property of norms and of the convex 
function $F$. The only delicate term is the third one, involving the 
nonlinear term $f'(u_\ve)$, where we use 
again the compactness of Corollary \ref{cor:comp-u}. 
Passing to the limit yields the following inequality
$$
|Y^0|\iti\itr F(u_0) \, {\rm d}x \, {\rm d}t + \frac 12|\partial\Sigma^0|\iti\itr (f(u_0))^2\, {\rm d}x\, {\rm d}t
$$
$$
+ \kappa\int\limits_0^T\int\limits_0^t\itr\ip |\left(\chi(y) - \omega(y)\right)\cdot \nabla_x f(u_0)(s,x)|^2\, {\rm d}\sigma(y) \, {\rm d}x \, {\rm d}s \, {\rm d}t.
$$
$$
+ \iti\itt\itr\iy f'(u_0) D(y) |\nabla_x u_0(s,x) + \nabla_y\left(\chi(y)\cdot \nabla_x u_0(s,x)\right)|^2\, {\rm d}y \, {\rm d}x \, {\rm d}s \, {\rm d}t.
$$
$$
+ \iti\itt\itr\ip D^s(y) |G(y) \nabla_x f(u_0)(s,x) + \nabla^s_y\left(\omega(y)\cdot \nabla_x f(u_0)(s,x)\right)|^2 \, {\rm d}\sigma(y) \, {\rm d}x \, {\rm d}s \, {\rm d}t 
$$
$$
\leq T |Y^0|\itr F(u^{in}) \, {\rm d}x + \frac{T}{2} |\partial\Sigma^0|\itr |v^{in}|^2 \, {\rm d}x .
$$
Recognizing formula (\ref{exp-disp}) for $A^*$ leads to
$$
|Y^0|\iti\itr F(u_0) \, {\rm d}x \, {\rm d}t + \frac 12|\partial\Sigma^0|\iti\itr (f(u_0))^2\, {\rm d}x\, {\rm d}t
$$
$$
+ \iti\itt\itr A^*(u_0)\nx u_0 \cdot\nx(f(u_0) )\, {\rm d}x\, {\rm d}s\, {\rm d}t
$$
\begin{equation}
\leq T |Y^0|\itr F(u^{in}) \, {\rm d}x + \frac{T}{2} |\partial\Sigma^0|\itr |v^{in}|^2 \, {\rm d}x .
\label{eq:rhsineq} 
\end{equation}
We now compare inequality (\ref{eq:rhsineq}) with the (time integral of the) energy equality 
for the homogenized equation (\ref{eq:hom}) with $f(u_0)$ as a test function
$$
|Y^0|\iti\itr F(u_0) \, {\rm d}x \, {\rm d}t + \frac 12|\partial\Sigma^0|\iti\itr (f(u_0))^2\, {\rm d}x\, {\rm d}t
$$
$$
+ \iti\itt\itr A^*(u_0)\nx u_0 \cdot\nx(f(u_0) )\, {\rm d}x\, {\rm d}s\, {\rm d}t
$$
\begin{equation}
= T |Y^0|\itr F(u_0)(0,x)\, {\rm d}x + \frac T2 |\partial\Sigma^0| \itr (f(u_0))^2(0,x)\, {\rm d}x .
\label{eq:rhseq} 
\end{equation}
The right hand side in (\ref{eq:rhsineq}) and (\ref{eq:rhseq}) are equal precisely 
when (\ref{eq:comp1}) holds true. Together with the definition (\ref{eq:comp2}) of 
the initial condition of the homogenized problem (\ref{eq:hom}), it is equivalent 
to our assumption (\ref{eq:comp3}). 
In such a case, the inequality (\ref{eq:rhsineq}) is actually an equality, meaning 
that the lower semi continuous convergences leading to (\ref{eq:rhsineq}) were 
exact convergences. Then, applying Lemma \ref{lem:convex} and Lemma \ref{prop:strocv} 
gives the result (\ref{eq:strongdef}).
\end{proof}

\begin{lem}
\label{lem:comp}
For any given $u^{in}\geq0$ there always exists a unique solution $v^{in}\geq0$ 
of the nonlinear equation (\ref{eq:comp3}), ${\mathcal H}(u^{in},v^{in})=0$. 
\end{lem}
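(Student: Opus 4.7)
The plan is to reduce $\mathcal H(u^{in},v^{in})=0$ to a scalar equation in $v^{in}$ and to show that its unique nonnegative solution is the explicit equilibrium value $v^{in}=f(u^{in})$.

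First, I would use (\ref{eq:comp2}) to express $u^0_0$ as a function of $v^{in}$. Since the map $s\mapsto |Y^0|\,s+|\partial\Sigma^0|\,f(s)$ is smooth and strictly increasing from $[0,\infty)$ onto $[0,\infty)$ (as $|Y^0|>0$ and $f'\geq 0$, $f(0)=0$), equation (\ref{eq:comp2}) uniquely defines a smooth strictly increasing map $v\mapsto g(v)$ by
$$
|Y^0|\,g(v)+|\partial\Sigma^0|\,f(g(v))=|Y^0|\,u^{in}+|\partial\Sigma^0|\,v.
$$
In particular $g(f(u^{in}))=u^{in}$, by plugging $v=f(u^{in})$ and invoking the uniqueness just mentioned. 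Setting $u^0_0=g(v^{in})$, the system (\ref{eq:comp2})--(\ref{eq:comp1}) then reduces to $\Phi(v^{in})=0$, with
$$
\Phi(v):=|Y^0|\bigl(F(g(v))-F(u^{in})\bigr)+\tfrac12|\partial\Sigma^0|\bigl(f^2(g(v))-v^2\bigr).
$$

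For existence, the natural candidate is $v_\star:=f(u^{in})$: since $g(v_\star)=u^{in}$, both differences in the definition of $\Phi$ vanish, so $\Phi(v_\star)=0$. For uniqueness, I would differentiate $\Phi$. From the defining relation of $g$ one obtains
$$
g'(v)=\frac{|\partial\Sigma^0|}{|Y^0|+|\partial\Sigma^0|\,f'(g(v))},
$$
and a short computation using $F'=f$ gives, after factoring $f(g(v))\,g'(v)\bigl(|Y^0|+|\partial\Sigma^0|f'(g(v))\bigr)=|\partial\Sigma^0|\,f(g(v))$, the clean identity
$$
\Phi'(v)=|\partial\Sigma^0|\bigl(f(g(v))-v\bigr).
$$
Finally, $\tfrac{d}{dv}\bigl(f(g(v))-v\bigr)=f'(g(v))\,g'(v)-1=-|Y^0|/\bigl(|Y^0|+|\partial\Sigma^0|\,f'(g(v))\bigr)<0$, so $v\mapsto f(g(v))-v$ is strictly decreasing on $[0,\infty)$ and vanishes precisely at $v_\star$. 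Hence $\Phi'>0$ on $[0,v_\star)$ and $\Phi'<0$ on $(v_\star,\infty)$, so $v_\star$ is a strict global maximum of $\Phi$; combined with $\Phi(v_\star)=0$ this forces $\Phi(v)<0$ for every $v\neq v_\star$, yielding uniqueness.

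There is no substantial obstacle here: the computation of $\Phi'$ is straightforward once $g$ is introduced, and all sign checks follow from $|Y^0|>0$ and $f'\geq 0$. The substance of the lemma is the observation, worth emphasizing, that (\ref{eq:comp3}) forces the natural equilibrium relation $v^{in}=f(u^{in})$ between the initial bulk and surface concentrations, thereby extending to the nonlinear Langmuir setting the explicit relation $v^{in}=\alpha u^{in}$ already known in the linear case.
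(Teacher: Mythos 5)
Your proof is correct and follows essentially the same route as the paper: your $\Phi$ is just $-|Y^0|\,{\mathcal H}$, your identity $\Phi'(v)=|\partial\Sigma^0|\bigl(f(g(v))-v\bigr)$ is the paper's formula (\ref{eq:DH-final}), and your sign analysis of $\tfrac{d}{dv}\bigl(f(g(v))-v\bigr)$ is the paper's convexity computation (\ref{eq:DDH}), leading in both cases to the same unique root $v^{in}=f(u^{in})$. The only (harmless) difference is one of order: you exhibit the candidate $v_\star=f(u^{in})$ first and then show it is the unique critical point, whereas the paper first locates the unique critical point of ${\mathcal H}$ and then identifies it as $f(u^{in})$.
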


\begin{proof}
Define $\eta = |\partial\Sigma^0| / |Y^0|$. 
Since $0\leq f(u)\leq \alpha u$, the function $u \to  u + \eta f(u)$ 
is monotone and invertible on $\mathbb{R}^+$. Therefore, (\ref{eq:comp2}) uniquely 
defines the homogenized initial data as
\begin{equation}
\label{eq:comp4}
u_0^0 = \left( I + \eta f \right)^{-1} 
\left( u^{in} + \eta v^{in} \right) . 
\end{equation}
To satisfy the additional relation (\ref{eq:comp1}) is equivalent to solving the 
nonlinear equation (\ref{eq:comp3}) where ${\mathcal H}$ is defined by 
\begin{equation}
\label{eq:comp5}
{\mathcal H}(u^{in},v^{in})= 
F(u^{in}) + \frac12 \eta (v^{in})^2 - 
\left( F + \frac 12 \eta f^2 \right) (u^0_0) ,
\end{equation}
where $u^0_0$ is defined by (\ref{eq:comp4}). For a given $u^{in}\geq0$, 
let us differentiate ${\mathcal H}$ with respect to $v^{in}$:
\begin{equation}
\label{eq:DH}
\partial_{v^{in}} {\mathcal H}(u^{in},v^{in}) = \eta v^{in} - (f(u^0_0) + \eta f(u^0_0) f'(u^0_0))\partial_{v^{in}}u^0_0 .
\end{equation}
Differentiating (\ref{eq:comp2}) with respect to $v^{in}$ leads to
\begin{equation}
\label{eq:D-hom-init}
(1 + \eta f'(u^0_0))\partial_{v^{in}}u^0_0 = \eta ,
\end{equation}
implying that $\partial_{v^{in}}u^0_0>0$. Using (\ref{eq:D-hom-init}) in (\ref{eq:DH}) simplifies the derivative of ${\mathcal H}$ with respect to $v^{in}$ as
\begin{equation}
\label{eq:DH-final}
\partial_{v^{in}} {\mathcal H}(u^{in},v^{in}) = \eta v^{in} - \eta f(u^0_0) .
\end{equation}
Since $f\geq0$, we have
$$
\partial_{v^{in}} {\mathcal H}(u^{in},0) = - \eta f(u^0_0) \le0.
$$
Also since $f\le \alpha/\beta$, we have
$$
\dsp\lim_{v^{in}\to+\infty}\partial_{v^{in}}{\mathcal H}(u^{in},v^{in}) = +\infty.
$$
Let us differentiate (\ref{eq:DH-final}) with respect to $v^{in}$:
\begin{equation}
\label{eq:DDH}
\partial^2_{v^{in}} {\mathcal H}(u^{in},v^{in}) = \eta - \eta f'(u^0_0)\partial_{v^{in}}u^0_0 .
\end{equation}
Using (\ref{eq:D-hom-init}) in (\ref{eq:DDH}) leads to
$$
\partial^2_{v^{in}} {\mathcal H}(u^{in},v^{in}) = \partial_{v^{in}}u^0_0 > 0 .
$$
Thus, for a fixed $u^{in}$, $v^{in}\to\partial_{v^{in}}{\mathcal H}(u^{in},v^{in})$ 
is continuous monotone increasing function so there exists a unique $v^{in}_*$ such that 
$\partial_{v^{in}}{\mathcal H}(u^{in},v^{in}_*)=0$. By (\ref{eq:DH-final}) we have 
$v^{in}_* = f(u^0_0)$ and from (\ref{eq:comp2}) we deduce $u^{in}=u^0_0$. 
Plugging these values in (\ref{eq:comp4}) implies that ${\mathcal H}(u^{in},v^{in}_*) = 0$. 
Since the function $v^{in} \to {\mathcal H}(u^{in},v^{in})$ is decreasing from 0 to 
$v^{in}_*$ and then increasing, $v^{in}_*$ is the only possible root for ${\mathcal H}$. 
\end{proof}

\section{Numerical Study}
\label{sec:num}

From a physical or engineering point of view, one of the main consequences of 
our homogenization result it to provide a formula to compute the so-called 
dispersion tensor $A^*$ which governs the spreading of the solute at a macroscopic 
scale. Formula (\ref{exp-disp}) for $A^*$ is not fully explicit with respect 
to the various physical parameters. Therefore it is interesting to study the 
sensitivity of $A^*$ with respect to important parameters like the concentration 
saturation $u_0$, or the bulk and surface diffusion $D$, $D^s$. 
This section is thus devoted to numerical computation of the effective parameters, 
given in Theorem \ref{main-weak}, and to study their variations in terms of these 
parameters. All our numerical tests are done in two dimensions. The periodicity cell 
is the unit square $]0,1[\times]0,1[$ and the solid obstacle is a disk of radius $0.2$ 
centered at $(0.5,0.5)$. The FreeFem++ package \cite{Pirofreefem} is used to perform 
all numerical simulations with Lagrange P1 finite elements on $21416$ vertices 
(degrees of freedom). The reaction parameters $\alpha$, $\beta$ are chosen to be unity. 
Also, the bulk and surface diffusion $D$, $D^s$ are taken to be unity.

In all our computations, we have taken a zero drift velocity:
\begin{equation}
\label{eq:drift0}
b^* = 0 .
\end{equation}
This is actually a necessary condition for the present geometrical setting of 
isolated solid obstacles. Indeed, recall our assumption (\ref{eq:drift}) on the 
velocity fields $b$, $b^s$:
$$
b^* = \frac{1}{|Y^0|}\int_{Y^0}b(y)\, {\rm d}y = \frac{1}{|\partial\Sigma^0|}\int_{\partial\Sigma^0}b^s(y)\, {\rm d}\sigma(y) .
$$
Since the surface velocity field $b^s$ is divergence free and the obstacle 
$\Sigma^0$ is compactly included in the unit cell $Y^0$ (therefore the manifold 
$\partial\Sigma^0\cap Y^0$ has no boundary), an integration by parts 
shows that
$$
\int_{\partial\Sigma^0} e_k \cdot b^s(y)\, {\rm d}\sigma(y) = - 
\int_{\partial\Sigma^0} y_k \div^s b^s(y)\, {\rm d}\sigma(y) = 0 .
$$
(It is only in the case of a connected solid part, which can happen only in 
dimension $d\geq3$, that one can have $b^*\neq0$.) 
For simplicity we have taken $b^s=0$ on $\partial\Sigma^0$. 
We have computed the mean zero velocity field $b$ in $Y^0$ by taking $\tilde b=\textrm{curl}\:\psi=(-\partial_{x_2}\psi,\partial_{x_1}\psi)$, with
\begin{equation}
\left\{
\begin{array}{ll}
 -\div({\mathcal M}(y)\nabla\psi) = 1 & \: \: \textrm{in} \: \: Y^0,\\
\psi = 0 & \: \: \textrm{on} \: \: \partial\Sigma^0,\\
\psi & \:\: Y-\textrm{periodic,}
\end{array}\right.
\label{eq:velocity-curl}
\end{equation}
where $\mathcal{M}$ is a $2\times2$ matrix and $b=\tilde b/\|\tilde b\|_{L^2(Y^0)}$ shall be the normalized velocity field. We can choose the velocity field $b$ to be either symmetric or non-symmetric. By symmetry we mean the symmetric nature of the velocity field in the unit cell $Y$ with respect to axes $(0.5,y)$ and $(x,0.5)$. For example, taking $\mathcal M$ to be an identity matrix we obatin a symmetric velocity field. On the other hand, taking $\mathcal M$ to be a variable diagonal matrix with the following diagonal elements we obtain a non-symmetric velocity field:
$$
{\mathcal M}_{11} =\left\{\begin{array}{ll}
                      0.01 + (0.5*y_1) & \textrm{if}\:\:y_1 < 0.5\\
                      0.26 + (y_1 - 0.5) & \textrm{otherwise}
                      \end{array}\right.,
$$
$$
{\mathcal M}_{22} = \cos(y_1).
$$
Thus obtained velocity fields are shown in Figure \ref{fig:1}.
\begin{figure}[!ht]
\begin{center}
\includegraphics[width=6.0 cm]{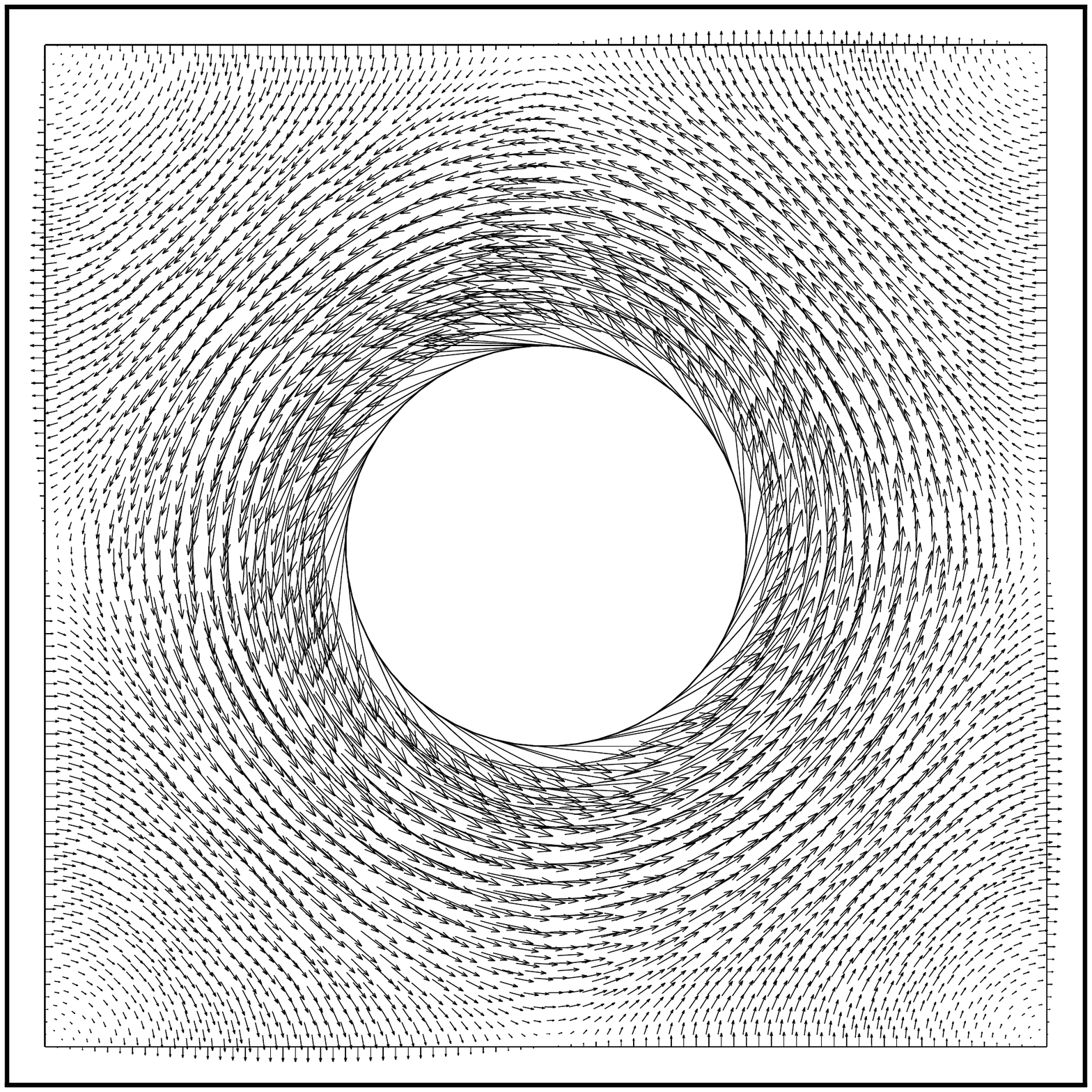}
\includegraphics[width=6.0 cm]{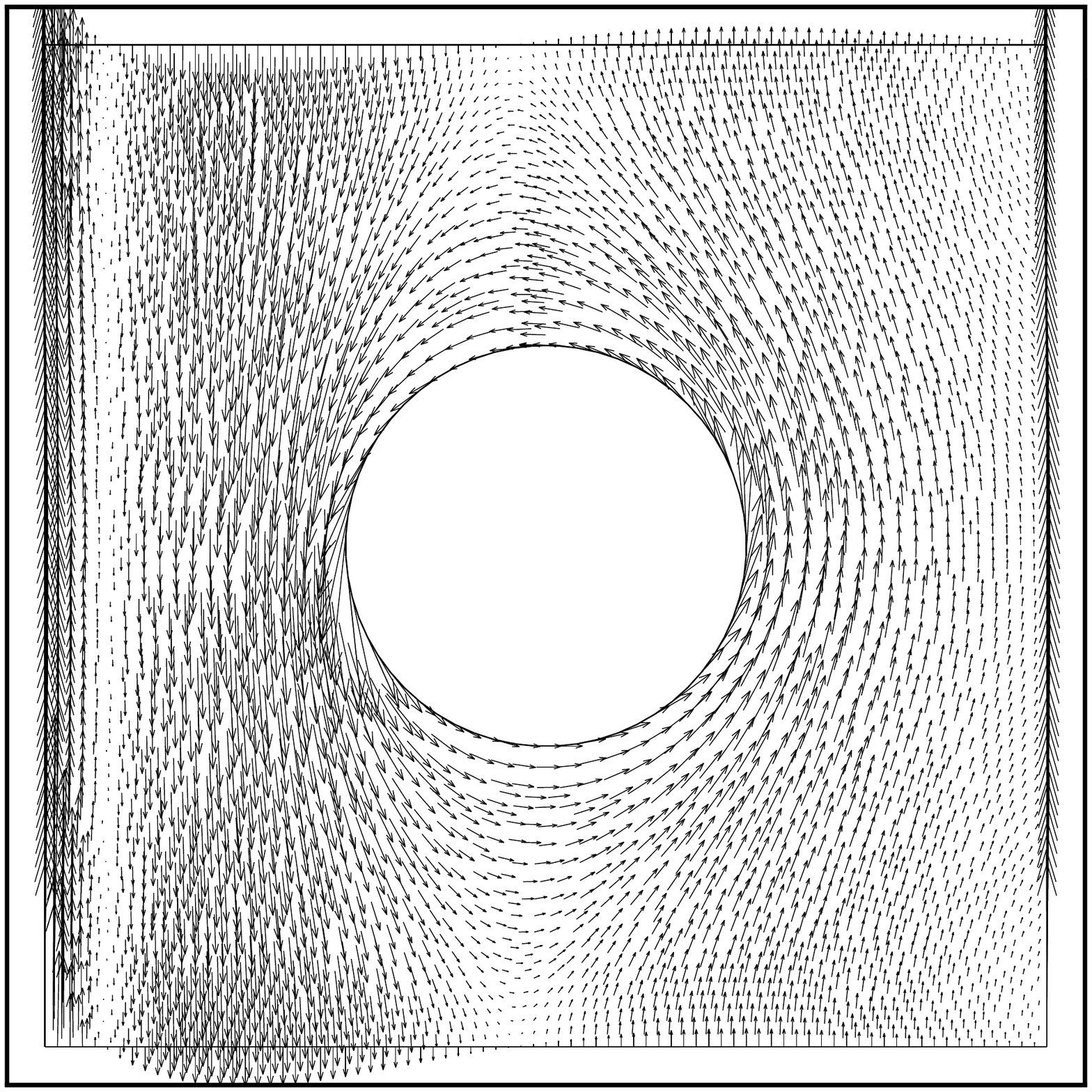}
\caption{Left: Symmetric velocity field in the unit cell, Right: Non-symmetric velocity field in the unit cell.}
\label{fig:1}
\end{center}
\end{figure}

The expression for the dispersion matrix given in (\ref{exp-disp}) implies that the dispersion tensor depends on the homogenized solution. In a first experiment, we study the behavior of $A^*_{11}$ and $A^*_{22}$ with respect to the magnitude of $u_0$ when the velocity field $b$ is symmetric (See Figure \ref{fig:2a}). In our second experiment, we take the velocity field $b$ to be non-symmetric and study the behavior of $A^*_{11}$ and $A^*_{22}$ with respect to the magnitude of $u_0$ (See Figure \ref{fig:2b}). As seen in Figures \ref{fig:2a} and \ref{fig:2b}, in the limit $u_0\to\infty$, both horizontal and vertical dispersion attain a limit. It is easy to see, at least formally, that in this limit, the cell problem is partially decoupled: the bulk cell solution $\chi_i$ satisfies the following steady state equation:
\begin{equation}
\label{formal-limit-u0-infin}
\left\{ 
\begin{array}{ll}
-b^* \cdot e_i + b(y)\cdot(e_i + \ny \chi_i) - \div_y(D(e_i + \ny \chi_i)) = 0 & \textrm{in } Y^0,\\[0.3cm]
- D\left( e_i + \ny \chi_i\right)\cdot n = 0 & \textrm{on } \partial \Sigma^0, \\[0.3cm]
y \to \chi_i(y) \quad  Y\mbox{-periodic,} &
 \end{array}\right.
\end{equation}
while the surface cell solution $\omega_i$ satisfies another equation where $\chi_i$ acts as a source term:
$$
-b^* \cdot e_i + \kappa\omega_i + b^s(y)\cdot(e_i + \ny^s \omega_i) - \div^s_y(D^s(e_i+ \nabla^s_y \omega_i)) = \kappa\chi_i\:\:\mbox{ on }\partial\Sigma^0.
$$
In our case, we have taken the velocity field $b^s$ to be zero and also the drift $b^*$ is zero. Thus, the above equation for $\omega_i$ is a simple elliptic equation with a source term.

\begin{figure}[!ht]
\begin{center}
\includegraphics[width=7.0 cm]{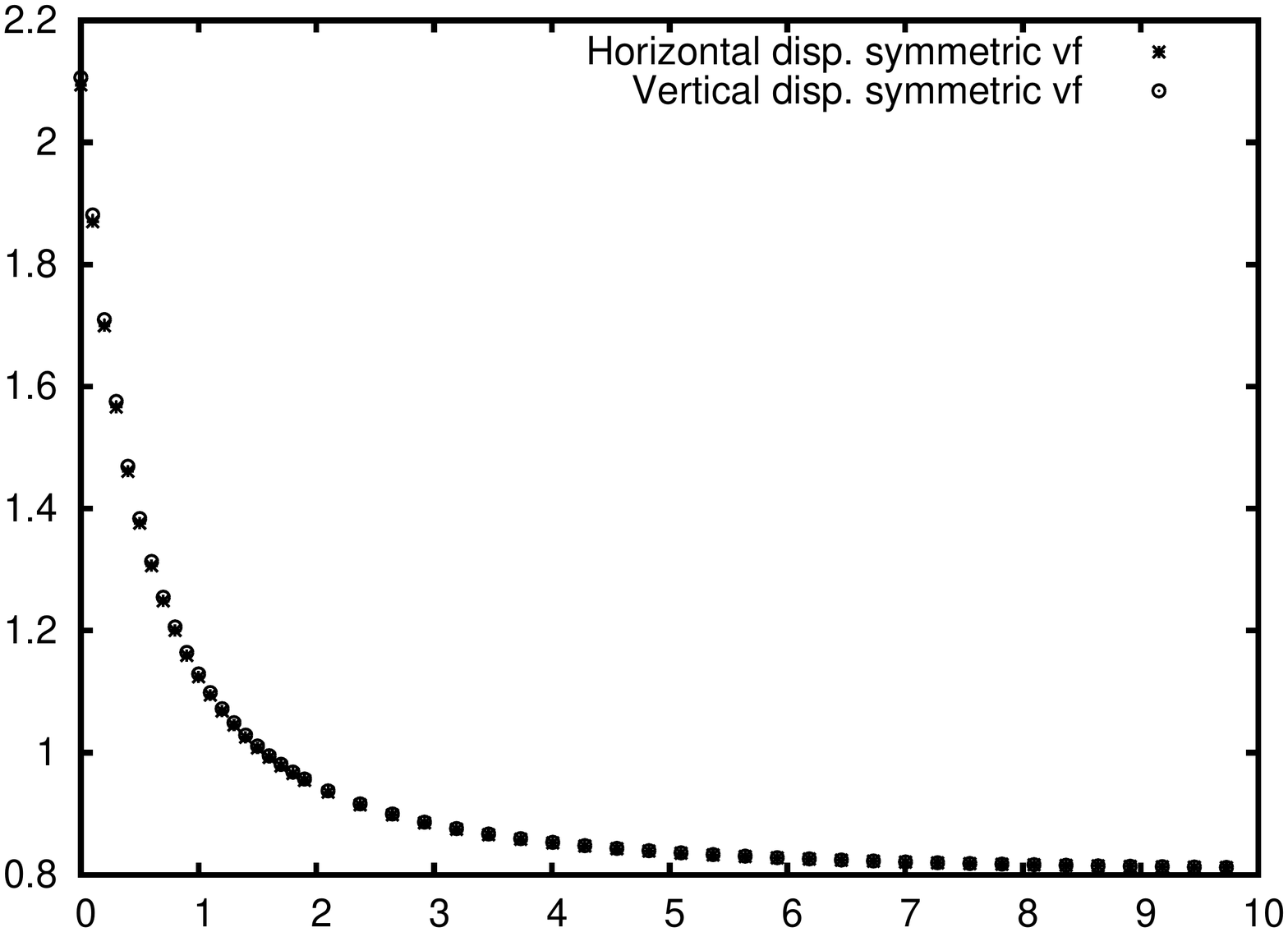}
\caption{Dispersion w.r.t the magnitude of $u_0$ in the case of a symmetric velocity field.}
\label{fig:2a}
\end{center}
\end{figure}

\begin{figure}[!ht]
\begin{center}
\includegraphics[width=7.0 cm]{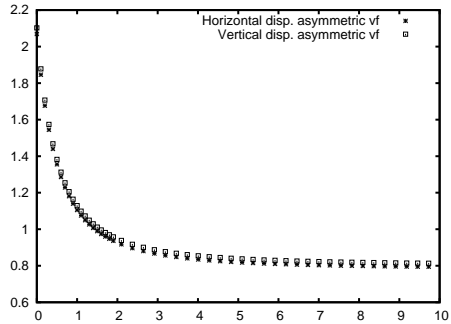}
\caption{Dispersion w.r.t the magnitude of $u_0$ in the case of a non-symmetric velocity field.}
\label{fig:2b}
\end{center}
\end{figure}

In Figure \ref{fig:3}, we plot the horizontal dispersion $A^*_{11}$ with respect to $D^s$ with $u_0=2.5$. Clearly the dispersion increases with the surface diffusion $D^s$. However, as seen in Figure \ref{fig:3}, the dispersion reaches a limit as $D^s$ goes to infinity. This can be explained formally by the fact that, in this limit, the surface cell solution $\omega_i$ is such that $(\omega_i + y_i)$ is constant on the pore surface $\partial\Sigma^0$. In the same limit, the bulk corrector $\chi_i$ satisfy the following limit problem:
\begin{equation}
\label{formal-limit-ds-2}
\left\{
\begin{array}{ll}
b(y) \cdot \ny \chi_i - \div_y (D (\ny \chi_i + e_i)) = (b^* - b) \cdot e_i & \textrm{in} \: \: Y^0,\\[0.3 cm]
-D (\ny\chi_i +  e_i )\cdot n +  b^*_i = \\ 
\dsp\frac{\alpha \kappa}{(1+\beta u_0)^2} \left(\chi_i + y_i - |\partial\Sigma^0|^{-1} \dsp\int_{\partial\Sigma^0} 
(\chi_i + y_i )d\sigma(y) \right) & \textrm{on} \: \: \partial\Sigma^0,\\
y \to \chi_i(y) & Y\mbox{-periodic.}
\end{array} \right.
\end{equation}

\begin{figure}[!ht]
\begin{center}
\includegraphics[width=7.0 cm]{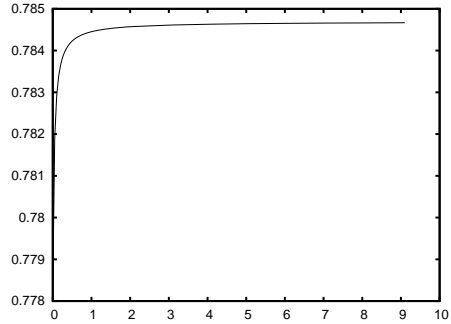}
\caption{Horizontal Dispersion w.r.t surface molecular diffusion $D^s$.}
\label{fig:3}
\end{center}
\end{figure}

In Figure \ref{fig:10}, we plot the horizontal dispersion $A^*_{11}$ with respect to the reaction rate $\kappa$ with $u_0=2.5$. In the limit $\kappa \to \infty$, we get an asymptote for the dispersion, corresponding to a limit cell problem where $\chi_i=\omega_i$ on $\partial\Sigma^0$. In this limit, the corresponding system satisfied by the bulk corrector $\chi_i$ is
\begin{equation}
\label{formal-limit-kappa}
\left\{
\begin{array}{ll}
b(y) \cdot \ny \chi_i - \div_y (D (\ny \chi_i + e_i)) = (b^* - b) \cdot e_i & \textrm{in} \: \: Y^0,\\[0.3 cm]
- D (\ny\chi_i +  e_i )\cdot n + ( b^* - b^s)\cdot e_i = & \\
\hspace{2cm} b^s(y) \cdot \ny^s \chi_i - \div^s_y (D^s (\ny^s \chi_i + e_i)) & \textrm{on} \: \: \partial\Sigma^0,\\
y \to \chi_i(y) & Y\mbox{-periodic.}
\end{array} \right.
\end{equation}
Unlike (\ref{formal-limit-ds-2}), the limit cell problem corresponding to the infinite reaction limit is no longer dependent on the homogenized solution $u_0$.
\begin{figure}[!ht]
\begin{center}
\includegraphics[width=7.0 cm]{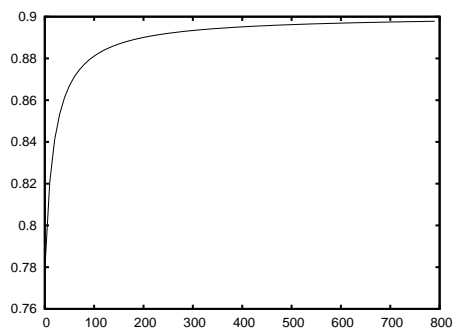}
\caption{Horizontal Dispersion w.r.t reaction rate $\kappa$.}
\label{fig:10}
\end{center}
\end{figure}

Other numerical simulations, including comparisons between an ``exact'' solution of (\ref{eq:p-1})-(\ref{eq:p-3}) (computed on a fine mesh) and a reconstructed solution can be found in \cite{Harsha}.

\signga

\signhh


\begin{thebibliography}{9}
\bibitem{Acerbi92}
Acerbi~E., Chiado Piat~V., Dal Maso~G., Percivale~D. {\em An extension theorem from connected sets, and homogenization in general periodic domains}, Nonlinear Anal., Vol~18 (1992), pp.481-496.

\bibitem{Allaire92}
Allaire~G. {\em Homogenization and two-scale convergence}, SIAM J. Math. Anal., Vol~23 No.6 (1992), pp.1482-1518.

\bibitem{Allaire08}
Allaire~G. {\em Periodic homogenization and effective mass theorems for the Schr{\" o}dinger equation}. Quantum Transport-Modelling, analysis and asymptotics, Ben Abdallah~N. and Frosali~G. eds., Lecture Notes  in Mathematics 1946, Springer (2008), pp.1-44.

\bibitem{Allaire10}
Allaire~G., Brizzi~R., Mikeli{\'c}~A., Piatnitski~A. {\em Two-scale expansion with drift approach to the Taylor dispersion for reactive transport through porous media}, Chemical Engineering Science, Vol~65 (2010), pp.2292-2300 

\bibitem{Allaire96}
Allaire~G., Damlamian~A., Hornung~U.  
{\em Two-scale convergence on periodic surfaces and applications},
Proceedings of the International Conference on Mathematical
Modelling of Flow through Porous Media (May 1995), Bourgeat~A. et al. eds.,
World Scientific Pub., Singapore (1996), pp.15-25.

\bibitem{AllaireHutridurga}
Allaire~G, Hutridurga~H. {\em Homogenization of reactive flows in porous media and competition between bulk and surface diffusion}, IMA J Appl Math., Vol 77, Issue 6 (2012), pp.788-815.

\bibitem{AllaireHutridurga2}
Allaire~G, Hutridurga~H. {\em On the homogenization of multicomponent transport,} arxiv:1411.5317, submitted.

\bibitem{AllaireMikelic10}
Allaire~G., Mikeli{\'c}~A., Piatnitski~A. {\em Homogenization approach to the dispersion theory for reactive transport through porous media}, SIAM J. Math. Anal., Vol~42 No.1 (2010), pp.125-144.

\bibitem{AO}
Allaire~G., Orive~R. {\em Homogenization of periodic non self-adjoint problems with large drift and potential},
COCV, Vol~13, (2007), pp.735-749.

\bibitem{AllairePiatnitski}
Allaire~G., Piatnitski~A. {\em Homogenization of nonlinear reaction-diffusion equation with a large reaction term}, Annali dell'Universita di Ferrara, Vol~56, (2010), pp.141-161.

\bibitem{APP}
Amaziane~B., Pankratov~L., Piatnitski~A. {\em Homogenization of immiscible compressible two-phase flow in highly heterogeneous porous media with discontinuous capillary pressures}, 
M3AS, Vol. 24, No. 7 (2014) pp.1421-1451.

\bibitem{AuAd:95}
Auriault~J.-L., Adler~P.-M. 
{\em Taylor dispersion in porous media: Analysis by multiple scale expansions},
Adv. Water Resources, 18 (1995), pp.217-226.

\bibitem{BFM}
Brahim-Otsmane~S., Francfort~G., Murat~F.
{\em Correctors for the homogenization of the wave and heat equations,}
J. Math. Pures Appl. (9) 71 (1992), pp.197-231.

\bibitem{Brezis83}
Br{\'e}zis~H. {\em Analyse Fonctionelle, Th{\'e}orie et applications}, Collection Math{\'e}matiques Appliqu{\'e}es pour la Ma{\^i}trise, Masson, Paris (1983).

\bibitem{CM:08}
Choquet~C., Mikeli\'c~A.
{\em Laplace transform approach to the rigorous upscaling of the infinite adsorption rate reactive flow  
under dominant Peclet number through a pore},
Appl. Anal., 87 (2008), pp.1373-1395.

\bibitem{CS:79}
Cioranescu~D., Saint Jean Paulin~J.
{\em Homogenization in open sets with holes},
J. Math. Anal. Appl., 71 (1979), pp.590-607.

\bibitem{CDT:03}
Conca~C., Diaz~J.I., Timofte~C. 
{\em Effective chemical processes in porous media,} Mathematical
Models and Methods in Applied Sciences, Vol 13, No. 10, pp.1437-1462, (2003).

\bibitem{vDK92}
van Duijn~C.~J., Knabner~P. 
{\em Travelling waves in the transport of reactive solutes through porous media: 
Adsorption and binary ion exchange---Part}~1, Transp. Porous Media, 8 (1992),
pp.167-194.

\bibitem{HOR}
Hornung~U. (editor),
{\em Homogenization and Porous Media},
Interdiscip. Appl. Math. 6, Springer-Verlag, New York, 1997.

\bibitem{HJ1}
Hornung~U., J\"ager~W.
{\em Diffusion, convection, adsorption, and reaction of chemicals in porous media},
J. Differential Equations, 92 (1991), pp.199--225.

\bibitem{HJM:94} 
Hornung~U., J\"ager~W., Mikeli\'c~A. 
{\em Reactive transport through an array of cells with semipermeable membranes,} 
M2AN, Vol 28, No. 1, pp.59-94, (1994).

\bibitem{Harsha} Hutridurga H.
{\em Homogenization of complex flows in porous media and applications}, Doctoral Thesis, Ecole Polytechnique, Palaiseau (2013). 

\bibitem{Jikov94}
Jikov~V.V., Kozlov~S.M., Oleinik~O.A. 
{\em Homogenization of differential operators and integral functionals}, Springer, Berlin (1994).

\bibitem{Lady68}
Ladyzhenskaya~0.A., Solonikov~V.A., Ural'ceva~N.N.
{\em Linear and quasilinear equations of parabolic type}, American Mathematical Society, Providence, RI (1968).

\bibitem{Lions69}
Lions~J.-L.
{\em Quelques m\'ethodes de r\'esolution des probl\`emes aux limites non lin\'eaires}, Dunod; Gauthier-Villars, Paris (1969).
 
\bibitem{Marusic05}
Marusic-Paloka~E., Piatnitski~A. {\em Homogenization of a nonlinear convection-diffusion equation with rapidly oscillating coefficients and strong convection}, Journal of London Math. Soc., Vol~72  No.2 (2005), pp.391-409.

\bibitem{Mau:91}
Mauri~R. 
{\em Dispersion, convection, and reaction in porous media}, 
Phys. Fluids~A, 3 (1991), pp.743--756.

\bibitem{MikVanD:05}
Mikeli\'c~A., Devigne~V., van Duijn~C.~J. 
{\em Rigorous upscaling of the reactive flow through a pore, under dominant Peclet and Damkohler numbers},
SIAM J. Math. Anal., 38 (2006), pp.1262--1287.

\bibitem{MP:04}
Mikeli\'c~A., Primicerio~M. 
{\em Homogenization of a problem modeling remediation of porous media,} 
Far East J. Appl. Math. 15 (2004), no. 3, pp.365-380. 

\bibitem{MP:06}
Mikeli\'c~A., Primicerio~M. 
{\em Modeling and homogenizing a problem of absorption/desorption in porous media,} 
Math. Models Methods Appl. Sci. 16 (2006), no. 11, pp.1751-1781. 

\bibitem{Nguetseng89} Nguetseng~G.
{\em A general convergence result for a functional related to the
theory of homogenization}, SIAM J. Math. Anal., Vol~20 No.3 (1989), pp.608-623.

\bibitem{Neuss96}
Neuss-Radu~M. {\em Some extensions of two-scale convergence}, C. R. Acad. Sci. Paris Sr. I Math., Vol~322 No.9 (1996), pp.899-904.

\bibitem{Pirofreefem}
Pironneau~O., Hecht~F., Le~Hyaric~A. FreeFem++ version 2.15-1, http://www.freefem.org/ff++/

\bibitem{Protter84}
Protter~M.H., Weinberger~H.F. {\em Maximum principles in differential equations}, Springer-Verlag, New York (1984).

\bibitem{Taylor53} Taylor~G.I. {\em Dispersion of soluble matter
in solvent flowing slowly through a tube}, Proc. Royal Soc. A, Vol~219 (1953), pp.186-203.

\end{thebibliography}
\end{document}